\documentclass[11pt,reqno]{amsart}
\usepackage{caption2}
\usepackage{amsfonts}
\usepackage{amsmath,amsthm,amssymb,url,listings,amsfonts,amscd}
\usepackage[alphabetic,initials]{amsrefs}
\usepackage{mathrsfs}
\usepackage{lipsum}
\usepackage{bbding}
\usepackage{graphicx,latexsym}
\usepackage{hyperref}
\usepackage{scrtime}
\usepackage{color}
\usepackage[normalem]{ulem}

\marginparwidth    0pt
\oddsidemargin     0pt
\evensidemargin    0pt
\topmargin         0pt
\textheight        21cm
\textwidth         16cm

\newcommand{\bea}{\begin{eqnarray}}
\newcommand{\eea}{\end{eqnarray}}
\newcommand{\bna}{\begin{eqnarray*}}
\newcommand{\ena}{\end{eqnarray*}}

\numberwithin{equation}{section}

\setcounter{footnote}{0}

\theoremstyle{plain}
\newtheorem{lemma}{Lemma}[section]
\newtheorem{theorem}[lemma]{Theorem}
\newtheorem{corollary}[lemma]{Corollary}

\theoremstyle{definition}
\newtheorem{definition}[lemma]{Definition}
\newtheorem{remark}{Remark}

\renewcommand{\Re}{\operatorname{Re}}

\begin{document}
	
	\title{Analytic twists of $\rm GL_3\times \rm GL_2$ automorphic forms}

	\author{Yongxiao Lin}
	\date{}
	\address{EPFL SB Mathgeom Tan, Station 8, CH-1015, Lausanne, Switzerland}
	\email{yongxiao.lin@epfl.ch}
	
		\author{Qingfeng Sun}
	\date{}
	\address{School of Mathematics and Statistics, Shandong University, Weihai\\Weihai, Shandong
		264209, China}
	\email{qfsun@sdu.edu.cn}
	

	\begin{abstract}
Let $\pi$ be a Hecke--Maass cusp form for $\rm SL_3(\mathbb{Z})$
with normalized Hecke eigenvalues $\lambda_{\pi}(n,r)$. Let $f$ be a holomorphic or Maass cusp form for $\rm SL_2(\mathbb{Z})$
with normalized Hecke eigenvalues $\lambda_f(n)$. In this paper,
we are concerned with obtaining nontrivial estimates for the sum
\begin{equation*}
\sum_{r,n\geq 1}\lambda_{\pi}(n,r)\lambda_f(n)e\left(t\,\varphi(r^2n/N)\right)V\left(r^2n/N\right),
\end{equation*}
where $e(x)=e^{2\pi ix}$,
$V(x)\in \mathcal{C}_c^{\infty}(0,\infty)$, $t\geq 1$ is a large parameter and
$\varphi(x)$ is some real-valued smooth function.
As applications, we give an improved subconvexity bound for $\rm GL_3\times \rm GL_2$ $L$-functions
in the $t$-aspect, and under the Ramanujan--Petersson conjecture we derive the following bound for sums of $\rm GL_3\times \rm GL_2$ Fourier coefficients
\begin{equation*}
\sum_{r^2n\leq x}\lambda_{\pi}(r,n)\lambda_f(n)\ll_{\pi, f, \varepsilon} x^{5/7-1/364+\varepsilon}
\end{equation*}
for any $\varepsilon>0$, which breaks for the first time the barrier $O(x^{5/7+\varepsilon})$ in a work by
Friedlander--Iwaniec.

	
\end{abstract}
\thanks{Y. L. was partially supported under Philippe Michel's  DFG-SNF lead agency program grant (Grant  200020L\_175755). Q. S. was partially
  supported by the National Natural Science Foundation
  of China (Grant Nos. 11871306 and 12031008)}
	
	\keywords{Fourier coefficients, $\rm GL_3\times \rm GL_2$ $L$-functions, analytic twists, exponential sums, subconvexity, circle method}
	
	\subjclass[2010]{11F30, 11L07, 11F66, 11M41}
	\maketitle
	
	\section{Introduction}\label{introduction}

Let $\{\lambda_F(n):n\geq 1\}$ be the Hecke eigenvalues of a $\rm GL_m$ automorphic form $F$
and let $\{e(t\varphi(n)):n\geq 1\}$  be a family of exponential functions, where
$t\geq 1$ is a large parameter and $\varphi(x)$ is some real-valued smooth function.
Here as usual, $e(x)=\exp(2\pi ix)$.
It is widely believed that the Hecke eigenvalues $\{\lambda_F(n):n\geq 1\}$ and
the exponential functions $\{e(t\varphi(n)):n\geq 1\}$ are not correlated, in the following sense
\bea\label{general-sum}
\sum_{n=1}^{\infty}\lambda_F(n)e\left(t\varphi\bigg(\frac{n}{X_m}\bigg)\right)V\bigg(\frac{n}{X_m}\bigg)
\ll_{F, V, A} X_m (\log X_m)^{-A},
\eea
for $X_m\ll t^{m/2+\varepsilon}$ and any $A\geq 1$.
(For the case where $X_m\gg t^{m/2+\varepsilon}$ one can apply the
functional equation of $L(s,F)$ to either produce a nontrivial estimate
or to reduce that case to the present setting.)

In applications, one is more concerned with a power-saving estimate
\bea\label{general-sum2}
\sum_{n=1}^{\infty}\lambda_F(n)e\left(t\varphi\bigg(\frac{n}{X_m}\bigg)\right)V\bigg(\frac{n}{X_m}\bigg)\ll_{F, V} {X_m}^{1-\delta},
\eea
for some $\delta>0$. In general this can be a rather difficult problem. Indeed,
by taking $\varphi(x)=-(\log x)/2\pi$, then this would imply
a subconvexity bound for the $L$-function $L(s,F)$ in the $t$-aspect.
On the other hand, in this setting the square-root cancellation phenomenon should not be expected to hold in general, as the following example by Iwaniec, Luo and Sarnak \cite{ILS} shows
\bea\label{GL2}
\sum_{n=1}^{\infty} \lambda_F(n)e(-2\sqrt{n})V\left(\frac{n}{X}\right)=c_FX^{3/4}+O\big(X^{1/4+\varepsilon}\big),
\eea
where $F=f$ is a $\rm SL_2(\mathbb{Z})$ holomorphic cusp form and $c_F$ is a constant depending on $F$.
Nevertheless, nontrivial results towards \eqref{general-sum2} have been known for several cases.

$\bullet$ If $F=f$ is a $\rm SL_2(\mathbb{Z})$ holomorphic cusp form, it was first proved by Jutila in his influential
Tata lecture notes \cite{Jutila} that one has the following bound
$$\sum_{n=1}^{\infty}\lambda_f(n)e\left(t\varphi\bigg(\frac{n}{X_2}\bigg)\right)V\left(\frac{n}{X_2}\right)\ll_{f, \varphi, V} t^{1/3}X_2^{1/2+\varepsilon}$$
as long as $t^{2/3}\ll X_2\ll t^{4/3}$. Similar results for $F$ a Maass cusp form have been obtained by several other authors later.

$\bullet$ For $F=\pi$ a $\rm SL_3(\mathbb{Z})$ Maass cusp form and for the case where
$\varphi(n)=-(\log n)/2\pi$, it was proved by Munshi \cite{Mun1}  that one has
$$\sum_{n=1}^{\infty}\lambda_\pi(n,1)e\left(t\varphi\bigg(\frac{n}{X_3}\bigg)\right)V\bigg(\frac{n}{X_3}\bigg)\ll_{\pi, V}
\begin{cases}t^{1/2}X_3^{5/8+\varepsilon}, & \quad \text{if} \quad t^{24/17}<X_3<t^{3/2+\varepsilon},\\
t^{11/10}X_3^{1/5+\varepsilon}, &\quad \text{if} \quad t^{11/8}<X_3\leq t^{24/17}.
\end{cases}$$
This was later strengthened to
$$\sum_{n=1}^{\infty}\lambda_\pi(n,1)e\left(t\varphi\bigg(\frac{n}{X_3}\bigg)\right)V\bigg(\frac{n}{X_3}\bigg)\ll_{\pi, V} t^{3/10}X_3^{3/4+\varepsilon},$$
as long as $t^{6/5}\ll X_3$ by Aggarwal \cite{Agg2}.
In the same setting, if we let $X_3=t^{1/\beta}$ and consider
$\varphi(n)=n^\beta$, then it was shown by Kumar, Mallesham, and Singh \cite{KMS19} that a bound similar to Aggarwal's holds
$$\sum_{n=1}^{\infty}\lambda_\pi(n,1)e\left(t\varphi\bigg(\frac{n}{X_3}\bigg)\right)V\bigg(\frac{n}{X_3}\bigg)\ll_{\pi, V} t^{3/10}X_3^{3/4+\varepsilon};$$
see also \cite{Huang} for further extensions.

$\bullet$ For the case where $F=\pi\otimes f$, a Rankin--Selberg convolution of
a $\rm SL_3(\mathbb{Z})$ Maass cusp form $\pi$ and a $\rm SL_2(\mathbb{Z})$ cusp form $f$,
and for $\varphi(n)=-(\log n)/2\pi$, it was recently established by Munshi
\cite{Mun6} that the following bound holds
\bea\label{Munshi's}
\sum_{n=1}^{\infty}\lambda_{\pi}(n,r)\lambda_f(n)n^{-it}V\left(\frac{r^2n}{X_6}\right)\ll_{\pi, f,V}
r^{-1}t^{59/42+3\eta/2}X_6^{1/2+\varepsilon},
\eea
as long as $t^{3-\eta}\ll X_6$.

Motivated by this and with applications in mind, the goal of this paper will be two-fold.
On the one hand, we would like to extend this result to a more general setting,
that the twist $n^{-it}$ is replaced by a more general Archimedean character.
On the other hand, we aim to obtain an improved bound than the one stated.
We will now set up some basic notation for our consideration.

From now on, $\pi$ will always denote a Hecke--Maass cusp form for $\rm SL_3(\mathbb{Z})$
with normalized Hecke eigenvalues $\lambda_{\pi}(n,r)$ and
$f$ will be either a holomorphic or Maass cusp form for $\rm SL_2(\mathbb{Z})$
with normalized Hecke eigenvalues $\lambda_f(n)$. Define
\bea\label{natural-sum}
\mathscr{S}(N)=\sum_{r,n\geq 1}\lambda_{\pi}(n,r)
\lambda_f(n)e\left(t\varphi\bigg(\frac{r^2n}{N}\bigg)\right)V\left(\frac{r^2 n}{N}\right),
\eea
where $t\geq 1$ is a large parameter, $\varphi(x)$ is some non-constant real-valued smooth function, and
$V(x)\in \mathcal{C}_c^{\infty}(1,2)$ is a smooth function with
support contained in $(1,2)$ and with total variation $\text{Var}(V)\ll 1$,
further satisfying the condition
\begin{equation}\label{derivative-of-V}
V^{(j)}(x)\ll_j \triangle^j
\end{equation} for $j\geq 0$ with
$\triangle\ll t^{1-\varepsilon}$. Then our main result states as follows.
\begin{theorem}\label{main-theorem}
Let $\varphi(x)=c\log x$ or $cx^{\beta}$ with $0<\beta\leq 1/3$ and $c\in \mathbb{R}\backslash \{0\}$. Assume $\triangle\ll t^{1/2-\varepsilon}$.
For any $0<\gamma<4/5$, we have
\bna
\mathscr{S}(N) \ll_{\pi,f,\varepsilon}
t^{3/5}N^{3/4+\varepsilon}+t^{-9/10}N^{5/4+\varepsilon}+t^{-9\gamma/14}N^{1+\varepsilon}
\ena
for $\max\{t^{8/5+2\gamma},t^{12/5+\varepsilon}\}<N<t^{16/5}$.
\end{theorem}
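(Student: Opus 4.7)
The plan is to attack $\mathscr{S}(N)$ via a delta-symbol method that separates the multiplicative structure of $\lambda_\pi(n,r)\lambda_f(n)$ from the analytic oscillation $e(t\varphi(r^2n/N))$. First I would rewrite $\mathscr{S}(N)$ as a pair of independent sums coupled by the Kronecker delta $\delta(n-m)$, with $n$ carrying the $\mathrm{GL}_3$-coefficient $\lambda_\pi(n,r)$ and $m$ carrying the $\mathrm{GL}_2$-coefficient $\lambda_f(m)$ and the analytic phase attached to the $m$-side. I would then express $\delta(n-m)$ using the Duke--Friedlander--Iwaniec delta method. To lower the effective conductor, I would insert an auxiliary factorization detecting $n\equiv m\pmod K$ with $K$ a free parameter of size $t^{\gamma}$, following the conductor-lowering trick of Munshi. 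This gives effective modulus $qK$ with $q$ running over a short range $Q$, the natural size chosen so that the diagonal and off-diagonal terms will balance against the analytic conductor $t$.

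Next I would dualize each sum separately. On the $n$-sum, apply the $\mathrm{GL}_3$ Voronoi formula; this replaces $n$ by a dual variable $n^{\ast}$ whose length is dictated by $qK$ and produces a hyper-Kloosterman sum modulo $qK$. On the $m$-sum, apply the $\mathrm{GL}_2$ Voronoi formula, producing an ordinary Kloosterman sum together with an oscillatory integral carrying the original phase $e(t\varphi(\cdot))$. These integral transforms are evaluated by repeated stationary phase. The hypothesis $\triangle\ll t^{1/2-\varepsilon}$ guarantees that the derivatives of the weight $V$ do not disrupt the stationary-phase analysis, and this is what lets one argument treat both $\varphi(x)=c\log x$ and $\varphi(x)=cx^{\beta}$ for $0<\beta\leq 1/3$ uniformly. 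I would also apply Poisson summation in the $r$-variable, which is short enough compared with its new conductor that its dual is very restrictive.

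After these transformations I would apply Cauchy--Schwarz in the surviving $\mathrm{GL}_3$-side variable, discarding the Hecke eigenvalues $\lambda_\pi$ by appealing to the Rankin--Selberg bound for $\sum|\lambda_\pi(n,r)|^2$. Opening the square produces a new congruence variable which would be dualized by Poisson summation modulo an appropriate power of $qK$. The resulting expression splits into a diagonal main term and an off-diagonal oscillating piece controlled by a character sum modulo $(qK)^{c}$. I would estimate the character sums using Deligne-type bounds combined with explicit evaluation of the Gauss and Kloosterman factors, and bound the off-diagonal oscillatory integral by a final stationary-phase analysis of the phase produced after all the previous dualizations.

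Optimizing the free parameters $Q$ and $K=t^{\gamma}$ against the lengths of the various dual sums should then produce three contributions matching, respectively, the three terms $t^{3/5}N^{3/4+\varepsilon}$, $t^{-9/10}N^{5/4+\varepsilon}$, and $t^{-9\gamma/14}N^{1+\varepsilon}$ in the statement: the first from the diagonal, the second from the off-diagonal zero-frequency, and the third from the generic off-diagonal (whose $\gamma$-dependence reflects the conductor-lowering). The main obstacles will be (i) tracking the phase uniformly through each Voronoi and stationary-phase step for the general smooth $\varphi$, rather than the special case $\varphi(x)=-(\log x)/2\pi$ handled in Munshi's work \cite{Mun6}, which requires careful bookkeeping of successive derivatives of $\varphi$; and (ii) extracting the sharp character-sum cancellation after Cauchy--Schwarz, where the interplay between the conductor $qK$ and the length of the dual variable is precisely what trades the weaker $t^{-\gamma/2}$ saving of a naive analysis for the $t^{-9\gamma/14}$ appearing in the theorem.
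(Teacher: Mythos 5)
Your skeleton (DFI delta method, $\mathrm{GL}_3$ and $\mathrm{GL}_2$ Voronoi, Cauchy--Schwarz, Poisson) matches the paper's outline, but two of your key decisions would derail the argument.

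First and most importantly, you propose to \emph{insert} Munshi's conductor-lowering device detecting $n\equiv m\pmod K$ with $K\asymp t^{\gamma}$. This is the exact opposite of what the paper does: the entire improvement from Munshi's exponent $3/2-1/42$ to $3/2-3/20$ comes from \emph{removing} the conductor-lowering integral \eqref{conductor-lowering}, following Aggarwal's observation for $\mathrm{GL}_3$. With the conductor-lowering factor present the effective modulus $qK$ becomes larger, the dual lengths after Voronoi grow, and the Cauchy--Schwarz/Poisson step is forced to accept a worse saving; you would land back at (or below) Munshi's bound, not the one claimed. Relatedly, you have misread the role of $\gamma$: it is not a conductor-lowering parameter at all. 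In the paper $\gamma$ truncates the outer $r$-sum to $r\le t^{\gamma}$, and the entire term $t^{-9\gamma/14}N^{1+\varepsilon}$ is a trivial Cauchy--Schwarz/Rankin--Selberg bound on the tail $r>t^{\gamma}$, obtained in \eqref{error} \emph{before} any circle-method machinery is applied. The parameter $r$ is then frozen; there is no Poisson summation in $r$.

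Second, your accounting of where the three terms come from is wrong. The term $t^{-9\gamma/14}N^{1+\varepsilon}$ is the tail of the $r$-sum as just described, not a "generic off-diagonal." The term $t^{-9/10}N^{5/4+\varepsilon}$ arises from the \emph{small} dual $\mathrm{GL}_3$-range $N_1\ll C^3 r/N_0^{1-\varepsilon}$ (Section~\ref{contribution-smaller}), handled by the Wilton bound and a two-dimensional second-derivative test, before any Cauchy--Schwarz; it is not an off-diagonal zero-frequency term. Only $t^{3/5}N^{3/4+\varepsilon}$ comes from balancing the zero-frequency diagonal $\mathbf{\Sigma}_0^{\pm}\ll r^{1/2}N_0^{3/2+\varepsilon}/Q^{3/2}$ against the non-zero-frequency contribution $\mathbf{\Sigma}_{\neq 0}^{+}\ll r^{1/2}N_0^{1/4+\varepsilon}Qt$ in the generic range, with $Q=N_0^{1/2}/t^{2/5}$. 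Finally, the character sums after Poisson are ordinary Ramanujan/Kloosterman sums modulo $q_1q_2q_2'r/n_1$, evaluated elementarily as in Lemma~\ref{lem: character sum}; no Deligne-type input is used, and the $\mathrm{GL}_3$ Voronoi formula produces ordinary Kloosterman sums, not hyper-Kloosterman sums. Without repairing the conductor-lowering choice and the role of $\gamma$, the proposal does not reach the stated bound.
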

\begin{remark}
(1)  The estimate should hold for more general function $\varphi(x)$ satisfying the condition $\varphi'(x)\cdot\left(\varphi(x^3)\right)''\leq 0$. This latter condition arises from
using the stationary phase method at different stages of the proof.
For functions that are not of this form, it is possible to apply a duality
principle (i.e., applying functional equation for the $L$-functions $L(s,\pi\otimes f)$)
to reduce those cases to the present setting and to produce some nontrivial results;
see \cite[Sec. 9]{LMS} for related discussions. On the other hand, the
assumption $\triangle\ll t^{1/2-\varepsilon}$ arises when we use stationary phase
analysis for certain oscillatory integral in our approach; see \eqref{assumption-on-Delta}.

\par
(2) For results that use a functional equation (or equivalently, Voronoi summation formula) to study twists of the form $\sum_{N< n\leq 2N}\lambda_{\pi}(n,r)
\lambda_f(n)e(\alpha\, n^\beta)$, the reader is referred to \cite{Fri-Iwa,Ren-Ye}; see also \cite{KP}.
In the other direction, in \cite{LMS}, sums where the twists are \emph{algebraic} in nature, that is, sums of the form $\sum_{r, n\geq 1}\lambda_{\pi}(n,r)
\lambda_f(n)K(n)V\left(r^2n/N\right)$, where $K$ is the Frobenius trace function of some $\ell$-adic sheaf modulo $q$, are considered.
\end{remark}

Since the test function $V$ in the statement allows oscillations, one can derive a nontrivial upper bound for the sum with a sharp cut. Indeed, one has the following
\begin{corollary}\label{sharp-cut-sum}
Same notation and assumptions as above. We have
\bna
\sum_{N\leq r^2n\leq 2N}\lambda_{\pi}(n,r)
\lambda_f(n)e\left(t\varphi\bigg(\frac{r^2n}{N}\bigg)\right)
\ll_{\pi,f,\varepsilon} t^{3/5}N^{3/4+\varepsilon}+t^{-9/10}N^{5/4+\varepsilon}
+ t^{-9\gamma/14}N^{1+\varepsilon}+ t^{-1/2}N^{1+\varepsilon}
\ena
for  $\max\{t^{8/5+2\gamma}, t^{5/2-\varepsilon}\}<N<t^{16/5}$.
\end{corollary}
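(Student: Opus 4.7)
The plan is to deduce the corollary from Theorem~\ref{main-theorem} by replacing the sharp cutoff $\chi_{[N,2N]}(r^2n)$ with a smooth weight compatible with the hypotheses of that theorem, and to control the boundary discrepancy by Rankin--Selberg theory for $\pi\otimes f$.

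I would set $\delta := t^{-1/2+\varepsilon}$ and choose $W \in \mathcal{C}_c^{\infty}(1,2)$ with $0\le W \le 1$, $W \equiv 1$ on $[1+\delta, 2-\delta]$, total variation $\ll 1$, and $W^{(j)}(x) \ll_j \delta^{-j}$ for every $j \ge 0$. With this choice, $\triangle = \delta^{-1} = t^{1/2-\varepsilon}$ exactly saturates the admissible range of Theorem~\ref{main-theorem}, so applying that theorem to $V = W$ yields the first three terms $t^{3/5}N^{3/4+\varepsilon}+t^{-9/10}N^{5/4+\varepsilon}+t^{-9\gamma/14}N^{1+\varepsilon}$ of the claimed bound for the smoothed sum $\sum_{r,n \ge 1}\lambda_{\pi}(n,r)\lambda_f(n)\,e(t\varphi(r^2n/N))\,W(r^2n/N)$.

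What remains is to estimate the boundary remainder
\[
R := \sum_{r,n \ge 1}\lambda_{\pi}(n,r)\lambda_f(n)\,e\!\left(t\varphi(r^2n/N)\right)(\chi_{[1,2]}-W)(r^2n/N),
\]
whose weight is supported on $r^2n \in [N,(1+\delta)N]\cup[(2-\delta)N, 2N]$, a union of two short intervals of combined length $O(N\delta)$. Writing $\lambda_{\pi\otimes f}(m) := \sum_{r^2n=m}\lambda_\pi(n,r)\lambda_f(n)$ (so that, up to an elementary Euler factor, $L(s,\pi\otimes f) = \sum_{m \ge 1}\lambda_{\pi\otimes f}(m)m^{-s}$ is the $\rm GL_6$ Rankin--Selberg $L$-function), $R$ is a sum of $\lambda_{\pi\otimes f}(m)\,e(t\varphi(m/N))$ over integers $m$ in those two short intervals. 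Bounding the phase trivially and invoking the short-interval estimate $\sum_{X \le m \le X+Y}|\lambda_{\pi\otimes f}(m)| \ll Y^{1+\varepsilon}$---which follows from the mean-square Rankin--Selberg bound for $L(s,\pi\otimes f)$ combined with Cauchy--Schwarz, once $Y$ exceeds an appropriate power of $X$---one obtains $|R| \ll (N\delta)^{1+\varepsilon} = t^{-1/2+\varepsilon}N^{1+\varepsilon}$, supplying the fourth term. The strengthened hypothesis $N > t^{5/2-\varepsilon}$ in the corollary is precisely what guarantees that $Y = N\delta \gg t^{2-\varepsilon}$ lies in the valid range of this short-interval coefficient bound.

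The main delicate point in this route is the short-interval form of the Rankin--Selberg bound on the absolute Dirichlet coefficients of $L(s,\pi\otimes f)$; the remaining ingredients---the smoothing itself, and the verification that $W$ satisfies the derivative hypothesis of Theorem~\ref{main-theorem}---are routine. An alternative handling of $R$, working directly from a Perron-type contour integral for $L(s,\pi\otimes f)$ shifted by $it$ along the critical line, should yield the same estimate without explicit recourse to a coefficient bound, and either route highlights that the boundary error is the only novel analytic ingredient required beyond Theorem~\ref{main-theorem}.
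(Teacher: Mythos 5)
Your high-level strategy matches the paper's: smooth the sharp cutoff with a bump $V$ that equals $1$ away from the endpoints of $[1,2]$, take $\triangle=t^{1/2-\varepsilon}$ to saturate the hypothesis of Theorem~\ref{main-theorem}, and bound the boundary remainder $R$ over the two short intervals of length $N/\triangle$ by a Rankin--Selberg argument. That part is fine.

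The gap is in how you bound $R$. You first combine the $\mathrm{GL}_3\times\mathrm{GL}_2$ coefficient into a single $\mathrm{GL}_6$ quantity $\lambda_{\pi\otimes f}(m)=\sum_{r^2n=m}\lambda_\pi(n,r)\lambda_f(n)$ and then invoke a short-interval coefficient bound $\sum_{X\le m\le X+Y}|\lambda_{\pi\otimes f}(m)|\ll Y^{1+\varepsilon}$, attributing it to the Rankin--Selberg mean square for $L(s,\pi\otimes f)$ plus Cauchy--Schwarz. This does not go through in the stated range. The Rankin--Selberg $L$-function controlling $\sum_{m\le X}|\lambda_{\pi\otimes f}(m)|^2$ is a degree-$36$ $L$-function, and Landau's lemma gives an error term $O(X^{35/37})$; so the short-interval mean-square asymptotic $\sum_{X\le m\le X+Y}|\lambda_{\pi\otimes f}(m)|^2\ll Y$ is only available for $Y\gg X^{35/37}$. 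Here $X=N$, $Y=N t^{-1/2+\varepsilon}$, so the requirement is $N\gg t^{37/4+\varepsilon}$, which is far outside the range $N<t^{16/5}$. Your appeal to the condition $N>t^{5/2-\varepsilon}$ (hence $Y\gg t^{2-\varepsilon}$) is comparing $Y$ to a power of $t$, not to the relevant power of $X=N$; it does not rescue the estimate. The Perron-contour alternative you sketch is likewise circular, since bounding that integral amounts to a mean-value or subconvex estimate for $L(s,\pi\otimes f)$ itself.

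The paper avoids this by applying Cauchy--Schwarz to the sum over $(r,n)$ \emph{before} invoking Rankin--Selberg, splitting into $\sum_{N<r^2n\le N+N/\triangle}|\lambda_\pi(n,r)|^2$ and $\sum_{N<r^2n\le N+N/\triangle}|\lambda_f(n)|^2$. These are controlled by the $\mathrm{GL}_3$ and $\mathrm{GL}_2$ Rankin--Selberg asymptotics \eqref{GL3-Rankin--Selberg} and \eqref{GL2: Rankin Selberg}, whose Landau error exponents are $4/5$ and $3/5$. The binding constraint is then $\triangle\le N^{1/5}$, i.e., $N\ge t^{5/2-\varepsilon}$ — which is exactly the extra hypothesis of the corollary. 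So the factorised Cauchy--Schwarz is essential; folding $\pi$ and $f$ into a single $\mathrm{GL}_6$ object too early destroys the favorable error exponents and invalidates the estimate.
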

Theorem \ref{main-theorem} admits an application in bounding $L$-functions on the critical line. We recall
\bna
L\left(s,\pi\otimes f\right)=\sum_{r,n\geq 1}\frac{\lambda_{\pi}(n,r)\lambda_{f}(n)}{(nr^{2})^{s}}.
\ena
An application of the functional equation implies the bound $L\left(1/2+it,\pi\otimes f\right)\ll t^{3/2+\varepsilon}$, which is commonly known as the convexity or trivial bound. The aforementioned bound \eqref{Munshi's} of Munshi's corresponds to
$$L\left(1/2+it,\pi\otimes f\right)\ll_{\pi,f,\varepsilon} t^{3/2-1/42+\varepsilon}.$$
We obtain an improved bound over Munshi's.
\begin{corollary}\label{subconvexity} We have
\bna
L\left(1/2+it,\pi\otimes f\right)\ll_{\pi,f,\varepsilon} t^{3/2-3/20+\varepsilon}.
\ena
\end{corollary}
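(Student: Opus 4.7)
The strategy is to derive Corollary \ref{subconvexity} from Theorem \ref{main-theorem} via an approximate functional equation. Since $L(s,\pi\otimes f)$ is a degree six $L$-function of $t$-aspect analytic conductor $\asymp t^6$, the standard AFE combined with a dyadic partition of unity in the variable $r^2n$ reduces the central value to
\begin{equation*}
L(1/2+it,\pi\otimes f)\ll t^{\varepsilon}\max_{N\ll t^{3+\varepsilon}}\frac{|\mathscr{S}(N)|}{\sqrt{N}},
\end{equation*}
where $\mathscr{S}(N)$ is as in \eqref{natural-sum} with the choice $\varphi(x)=-(\log x)/2\pi$ (so that $(r^2n/N)^{-it}$ is absorbed into $e(t\varphi(r^2n/N))$) and $V\in\mathcal{C}^{\infty}_c(1,2)$ is a fixed dyadic bump coming from the partition and the AFE weight. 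In particular $\triangle=O(1)$, well within the hypothesis $\triangle\ll t^{1/2-\varepsilon}$.

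I would split the dyadic parameter at $N_0=t^{12/5+\varepsilon}$. For $N\le N_0$ Theorem \ref{main-theorem} does not apply, but Cauchy--Schwarz together with the Rankin--Selberg estimates for $\pi$ and $f$ gives the trivial bound $\mathscr{S}(N)\ll N^{1+\varepsilon}$, contributing at most $N_0^{1/2+\varepsilon}=t^{6/5+\varepsilon}$, comfortably below the target $t^{27/20+\varepsilon}$. For $t^{12/5+\varepsilon}<N\ll t^{3+\varepsilon}$, a range safely inside the window $N<t^{16/5}$ of Theorem \ref{main-theorem}, I invoke the theorem with the choice $\gamma=7/30$; since $8/5+2\gamma=31/15<12/5$, the threshold $\max\{t^{8/5+2\gamma},t^{12/5+\varepsilon}\}$ collapses to $t^{12/5+\varepsilon}$, matching the split above.

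Each of the three error terms supplied by Theorem \ref{main-theorem} is a monotone positive power of $N$, so after division by $\sqrt{N}$ the supremum over the main range is attained at the upper endpoint $N\asymp t^3$. There the three contributions evaluate, respectively, to $t^{3/5+3/4+\varepsilon}=t^{27/20+\varepsilon}$, $t^{-9/10+9/4+\varepsilon}=t^{27/20+\varepsilon}$, and $t^{3/2-9\gamma/14+\varepsilon}=t^{27/20+\varepsilon}$, the last equality holding precisely when $9\gamma/14=3/20$, i.e.\ $\gamma=7/30$; this triple balancing is what pins down the optimal $\gamma$. Collecting the pieces gives $L(1/2+it,\pi\otimes f)\ll t^{27/20+\varepsilon}=t^{3/2-3/20+\varepsilon}$, as required.

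There is no genuine obstacle at this last stage: the three error terms of Theorem \ref{main-theorem} have been arranged so they can all be balanced simultaneously at $N=t^3$ for a suitable $\gamma$, and the small-$N$ portion not covered by the theorem is dominated by the main-term exponent since $6/5<27/20$. The hard work is entirely packed into Theorem \ref{main-theorem} itself.
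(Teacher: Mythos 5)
Your proof is correct and follows essentially the same route as the paper: approximate functional equation, dyadic decomposition, and an application of Theorem~\ref{main-theorem} with $\varphi(x)=-(\log x)/2\pi$, balancing the error terms at $N\asymp t^3$. The only difference is the parameter choice — you take $\gamma=7/30$ (the smallest admissible value, which balances all three error terms) and split at $N_0=t^{12/5+\varepsilon}$, whereas the paper takes $\gamma=11/20$ and $\eta=3/10$, absorbing the range $N\leq t^{27/10}$ into a convexity-type term $t^{(3-\eta)/2}=t^{27/20}$ that is also tuned to match the main contribution; both choices yield the same exponent $27/20$.
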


\begin{remark}
(1) The possibility of improving the $t$-aspect subconvexity exponent is based on an observation made by Aggarwal.  In \cite{Mun1}, Munshi used Kloosterman's circle method together with a ``conductor-lowering" trick
\bea\label{conductor-lowering}
\frac{1}{K}\int_{\mathbb{R}}V\left(\frac{v}{K}\right)\left(\frac{n}{m}\right)^{iv}\mathrm{d}v
\eea that restricts $|n-m|<t^{\varepsilon}N/K$ (here $n,m\asymp N$), to obtain subconvexity bound for $L(1/2+it,\pi)$ in the $t$-aspect. Aggarwal \cite{Agg2} showed that not only
\eqref{conductor-lowering} is removable, but also the subconvex exponent can be improved. Here we demonstrate once again that the trick \eqref{conductor-lowering} in \cite{Mun6} can also be removed in the $\rm GL_3\times \rm GL_2$ scenario.
  \par

 (2) The exponent $3/20$ of saving over the trivial bound is consistent with the exponent $3/40$ in the $\rm{GL}_3$ case in the depth and $t$-aspects \cite{SZ, Agg2}. It would be interesting to explore why such an exponent of saving shows up naturally.
\end{remark}

Now we proceed to describe another application of Theorem \ref{main-theorem}.
Let $$A_m(x)=\sum_{n\leq x}\lambda_F(n),$$ where $\lambda_F(n)$'s are the Fourier--Whittaker coefficients of some fixed $\rm GL_m$ automorphic form $F$ satisfying a functional equation relating $L(s,F)$ and $L(1-s,\widetilde{F})$. Here $\widetilde{F}$ is the dual form of $F$.
In \cite{Fri-Iwa}, a general principle on how to get stronger bounds for the sum $A_m(x)$ was described.

Let $(\mu_{1,F},\ldots, \mu_{m,F})$ be the Satake parameter of $F$ at $\infty$.
Let $$B(x,N)=\sum_{n\leq N}\overline{\lambda_F(n)}\, n^{-\frac{m+1}{2m}}\cos(2\pi m\left(nx\right)^{1/m})$$
denote the ``dual" sum associated to $A_m(x)$.
Under the Ramanujan--Petersson conjecture $\lambda_F(n)\ll n^{\varepsilon}$ and the Selberg conjecture $\mathrm{Re}(\mu_{j,F})=0$\footnote{A careful reader might have already noticed that the dependence on the Selberg conjecture in Friedlander--Iwaniec's proof is very mild: they only assumed that the $\gamma$-factor $L_\infty(s,F)=\prod_{1\leq j\leq m}\Gamma_\mathbb{R}(s-\mu_{j,F})$ does not have poles for $\mathrm{Re}(s)>\frac{1}{2}+\frac{1}{m}$, when they are shifting contour for an integral involving ratio of $L_\infty(s,F)$. Here $\Gamma_\mathbb{R}(s)=\pi^{-s/2}\Gamma(s/2)$.}\label{Footnote1}, Friedlander and Iwaniec established the following identity which relates $A_m(x)$ to its dual sum $B(x,N)$
\bea\label{FI-Functional-eq}
A_m(x)=\mathrm{Res}_{s=1}\frac{L(s,F)}{s}x+c_F\, x^{\frac{m-1}{2m}}B(x,N)+O\left(N^{-\frac{1}{m}}x^{\frac{m-1}{m}+\varepsilon}\right),
\eea
where $c_F$ is some constant depending on the form $F$ only.
In particular, by estimating the sum $B(x,N)$ trivially and choosing
$N=x^{(m-1)/(m+1)}$, Friedlander and Iwaniec showed that\footnote{Landau's lemma also provides a similar result, provided the coefficients $\lambda_F(n)$'s are non-negative.}
\bea\label{trivial-one}
A_m(x)=\mathrm{Res}_{s=1}\frac{L(s,F)}{s}x+O\left(x^{\frac{m-1}{m+1}+\varepsilon}\right),
\eea
which should be regarded as the convexity bound.

Moreover, if one believed $B(x,N)=O((xN)^{\varepsilon})$, then we are led to the following conjectural optimal bound (\cite[Conjecture 2]{Fri-Iwa}) for the error term
\bea\label{conjectural-one}
A_m(x)=\mathrm{Res}_{s=1}\frac{L(s,F)}{s}x+O\left(x^{\frac{m-1}{2m}+\varepsilon}\right).
\eea

Results better than \eqref{trivial-one}
seem to be available for the divisor functions $\lambda_F(n)=\tau_m(n)$ or divisor-function like coefficients only, where
the forms $F$ are certain Eisenstein series so that the Fourier
coefficients admit certain factorization (e.g,
for $\lambda_F(n)=\tau(n)$ the divisor function or $\lambda_F(n)=\sum_{n_1n_2n_3=n}\chi_1(n_1)
\chi_2(n_2)\chi_3(n_3)$, $\chi_j$ being primitive Dirichlet characters). For results
without assuming the Ramanujan--Petersson conjecture and the Selberg conjecture but with bounds weaker
than \eqref{trivial-one}, we refer the readers to \cite{Lv09} and the references therein.

Our main result can be used to improve \eqref{trivial-one} for $m=6$ in a special case that $F$ admits factorization. Let $F=\pi\otimes f$, Rankin--Selberg convolution of $\rm GL_3$ and $\rm GL_2$ cusp forms, whose $n$-th Fourier coefficient is given by $\lambda_F(n)=\sum_{n_1^2n_2=n}\lambda_\pi(n_1,n_2)\lambda_f(n_2)$. Then \eqref{trivial-one} reads
$$\sum_{r^2n\leq x}\lambda_\pi(r,n)\lambda_f(n)=O\left(x^{5/7+\varepsilon}\right),$$
while the conjectural bound \eqref{conjectural-one} predicts that
$$\sum_{r^2n\leq x}\lambda_\pi(r,n)\lambda_f(n)=O\left(x^{5/12+\varepsilon}\right).$$
Note that in this case we do not have a main term in \eqref{trivial-one} and \eqref{conjectural-one}, since $L(s,\pi\otimes f)$ is holomorphic at $s=1$.

By taking $\varphi(n)=\pm 6 n^{1/6}$ in Theorem \ref{main-theorem} one can derive the following.
\begin{corollary}\label{sum-Fourier-coefficients}
Under the Ramanujan--Petersson conjecture, we have
\bna
\sum_{r^2n\leq x}\lambda_{\pi}(r,n)\lambda_f(n)\ll_{\pi,f,\varepsilon} x^{5/7-1/364+\varepsilon}.
\ena
\end{corollary}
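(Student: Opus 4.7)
The plan is to invoke the Friedlander--Iwaniec identity \eqref{FI-Functional-eq} with $m=6$ and $F=\pi\otimes f$, then bound the resulting dual sum $B(x,N)$ by a smooth dyadic decomposition combined with Theorem \ref{main-theorem}. Since $L(s,\pi\otimes f)$ is entire, \eqref{FI-Functional-eq} will give
\begin{equation*}
\sum_{r^2 n\le x}\lambda_\pi(r,n)\lambda_f(n)=c_F\,x^{5/12}B(x,N)+O\!\left(N^{-1/6}x^{5/6+\varepsilon}\right).
\end{equation*}
Using $\overline{\lambda_F(r^2m)}=\lambda_\pi(m,r)\lambda_f(m)$, writing $\cos(12\pi(nx)^{1/6})$ as a sum of two exponentials, and inserting a smooth dyadic partition of unity in the variable $r^2m\asymp N_0$, I reduce $x^{5/12}B(x,N)$ to $O(\log x)$ many sums of the shape
\begin{equation*}
x^{5/12}N_0^{-7/12}\sum_{r,m}\lambda_\pi(m,r)\lambda_f(m)\,e\!\left(\pm 6(r^2mx)^{1/6}\right)V(r^2m/N_0),
\end{equation*}
where $V$ is a fixed smooth bump supported in $(1,2)$, so $\triangle=O(1)\ll t^{1/2-\varepsilon}$.

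Setting $t=6(N_0x)^{1/6}$ and $\varphi(y)=\pm y^{1/6}$ (so $\beta=1/6\le 1/3$), the inner sum is exactly a copy of $\mathscr{S}(N_0)$ as in \eqref{natural-sum}, and Theorem \ref{main-theorem} applies whenever the admissibility window $\max\{t^{8/5+2\gamma},t^{12/5+\varepsilon}\}<N_0<t^{16/5}$ is satisfied; with the above value of $t$, this translates to $x^{2/3+\varepsilon}<N_0<x^{8/7}$ together with the milder constraint $N_0>x^{(4+5\gamma)/(11-5\gamma)}$. For ``low'' dyadic pieces $N_0\le x^{2/3+\varepsilon}$, where Theorem \ref{main-theorem} is not available, I fall back on the trivial Rankin--Selberg/Ramanujan--Petersson estimate $|\mathscr{S}(N_0)|\ll N_0^{1+\varepsilon}$, which contributes at most $x^{25/36+\varepsilon}$---comfortably below the target $x^{37/52+\varepsilon}=x^{5/7-1/364+\varepsilon}$.

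For the ``high'' pieces with $x^{2/3+\varepsilon}<N_0\le N$, substituting $t=6(N_0x)^{1/6}$ into Theorem \ref{main-theorem} and multiplying by $x^{5/12}N_0^{-7/12}$ produces the three contributions
\begin{equation*}
x^{31/60}N_0^{4/15}+x^{4/15}N_0^{31/60}+x^{5/12-3\gamma/28}N_0^{5/12-3\gamma/28},
\end{equation*}
each of which is increasing in $N_0$ and therefore maximized at $N_0\asymp N$. The next step is to choose $N$ and $\gamma$ optimally. Taking $N=x^{19/26}$ equates the Friedlander--Iwaniec error $N^{-1/6}x^{5/6+\varepsilon}$ with the first term $x^{31/60}N^{4/15}$, both becoming $x^{37/52+\varepsilon}$; taking $\gamma=7/135$ brings the third term to the same size $x^{37/52+\varepsilon}$, while the second term works out to $x^{67/104+\varepsilon}$, which is strictly smaller.

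The main obstacle, I expect, is checking the compatibility of the chosen pair $(N,\gamma)$ with the admissibility window of Theorem \ref{main-theorem} across the full high range. Here $19/26<8/7$ ensures $N<t^{16/5}$, and a direct computation gives $(4+5\cdot 7/135)/(11-5\cdot 7/135)=23/58<2/3$, so the auxiliary lower bound from $t^{8/5+2\gamma}<N_0$ is dominated by $N_0>x^{2/3+\varepsilon}$ throughout; these verifications are the only place where the arithmetic is genuinely tight. Summing over the $O(\log x)$ dyadic values of $N_0$ then yields the claimed bound $O(x^{5/7-1/364+\varepsilon})$.
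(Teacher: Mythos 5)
Your proposal follows the paper's route closely: the same invocation of the Friedlander--Iwaniec identity \eqref{FI-Functional-eq} for $F=\pi\otimes f$, the same substitution $t\asymp(N_0x)^{1/6}$, $\varphi(y)=\pm 6y^{1/6}$, the same three post-Theorem-\ref{main-theorem} terms $x^{31/60}N_0^{4/15}$, $x^{4/15}N_0^{31/60}$, $x^{5/12-3\gamma/28}N_0^{5/12-3\gamma/28}$, the same optimizing choices $N=x^{19/26}$, $\gamma=7/135$, and the same window verifications. The arithmetic is correct, including the observation that the middle term $x^{67/104}$ is harmless and that the lower cutoff $x^{(4+5\gamma)/(11-5\gamma)}=x^{23/58}$ is dominated by $x^{2/3}$.

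There is, however, a genuine gap. You apply Theorem~\ref{main-theorem} directly to smooth sums $\mathscr{S}(N_0)$ with a fixed bump $V$ of width $\triangle=O(1)$, claiming a dyadic partition of unity reduces the sharp-cut sum $B(x,N)=\sum_{r^2n\le N}(\cdots)$ to such smooth pieces. It does not: a partition of unity with all bumps of bounded variation $O(1)$ leaves the top $O(1)$ dyadic blocks multiplied by the sharp indicator $\mathbf{1}_{r^2n\le N}$, and their contribution, estimated trivially, is $\ll x^{5/12}N^{5/12+\varepsilon}=x^{75/104+\varepsilon}$ with $N=x^{19/26}$, which exceeds the target $x^{37/52+\varepsilon}=x^{74/104+\varepsilon}$. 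This is precisely what Corollary~\ref{sharp-cut-sum} is designed to handle: there $V$ is taken with $\triangle=t^{1/2-\varepsilon}$ (the maximum permitted by Theorem~\ref{main-theorem}), producing a controllable boundary error $\ll N/\triangle$, which after multiplication by $x^{5/12}N^{-7/12}$ gives only $\ll (xN)^{1/3}=x^{15/26}$. The paper therefore invokes Corollary~\ref{sharp-cut-sum} for each dyadic block rather than Theorem~\ref{main-theorem}. Your argument should do the same, or equivalently take $\triangle\asymp t^{1/2-\varepsilon}$ on the top block; with $\triangle=O(1)$ as stated the proof does not close. The rest of your treatment---in particular the handling of the low range $N_0\le x^{2/3+\varepsilon}$ by the trivial Ramanujan--Petersson bound, yielding $x^{25/36+\varepsilon}$---is clean and actually gives slightly more room than the paper's version, which effectively requires $N_0\gg x^{5/7-\delta}$ from Corollary~\ref{sharp-cut-sum}'s constraint $N>t^{5/2-\varepsilon}$.
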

\begin{remark}This bound is independent of the Selberg conjecture
for $\pi$ and $f$. Indeed, as we mentioned, Friedlander--Iwaniec's proof for
the identity \eqref{FI-Functional-eq} only assumed that
$L_\infty(s,F)=\prod_{1\leq j\leq m}\Gamma_\mathbb{R}(s-\mu_{j,F})$ has
no poles when $\Re(s)>\frac{1}{2}+\frac{1}{m}$. For $F=\pi\otimes f$,
$L_\infty(s,\pi\otimes f)=\prod_{1\leq i\leq 2, 1\leq j\leq 3}\Gamma_\mathbb{R}(s-\mu_{i,f}-\mu_{j,\pi})$, where $(\mu_{1,f},\mu_{2,f})=(-\frac{k-1}{2}, -\frac{k+1}{2})$ if $f$
is holomorphic and $|\Re(\mu_{i,f})|\leq \theta_2=7/64$ if $f$ is Maass
and $|\Re(\mu_{j,\pi})|\leq \theta_3=5/14$ by Kim--Sarnak \cite{K}.
Hence $L_\infty(s,\pi\otimes f)$ is holomorphic
for $\Re(s)>\theta_2+\theta_3=209/448$. In particular, it does not have poles when $\Re(s)>\frac{1}{2}+\frac{1}{6}$.
\end{remark}

\medskip
The paper is organized as follows. In Section \ref{sketch-of-proof}, we
provide a quick sketch and key steps of the proof. In
Section \ref{review-of-cuspform}, we review some basic materials of
automorphic forms on $ \rm GL_2$ and $ \rm GL_3$. Sections \ref{details-of-proof}
and \ref{proofs-of-technical-lemmas} give details of the proof for Theorem \ref{main-theorem}
and in Section \ref{proofs-of-corollaries} we complete the proofs
for Corollaries \ref{sharp-cut-sum}, \ref{subconvexity}, and \ref{sum-Fourier-coefficients}.

\bigskip

\noindent
{\bf Acknowledgements.} The first named author would like to thank Philippe Michel and Chandra Sekhar Raju for many illuminating discussions. We are grateful to Bingrong Huang for many helpful discussions. We also thank Keshav Aggarwal, Guanghua Ji, and Zhi Qi for pointing out several errors in the first version of the manuscript and for some valuable comments. Finally we are very grateful to the referees for their detailed comments, corrections, and suggestions.

\bigskip
\noindent
{\bf Notation.}
Throughout the paper, the letters $q$, $m$ and $n$, with or without subscript,
denote integers. The letters $\varepsilon$ and $A$ denote arbitrarily small and large
positive constant, respectively, not necessarily the same at different occurrences.
We use $A\asymp B$ to mean that $c_1B\leq |A|\leq c_2B$ for some positive constants $c_1$ and $c_2$. The symbol
$\ll_{a,b,c}$ denotes that the implied constant depends at most on $a$, $b$ and $c$, and
$q\sim C$ means $C<q\leq 2C$. As usual, $e(x)=e^{2\pi ix}$. The $\star$ in
$\sideset{}{^\star}\sum\limits_{a\bmod q}$ means that the sum is restricted to $(a,q)=1$.

\section{Outline of the proof}\label{sketch-of-proof}
To rigorously complete the proof, the details are quite complicated.
In this section, we provide a quick sketch of the proof to guide
the reader through the essential parts of our argument.
Since the parameter $r$ in the $\mathrm{GL}_3$
Hecke eigenvalue $\lambda_\pi(n,r)$ in our actual proof is fixed, we assume $r=1$ here.
Suppose we are working with the following sum
\bna
S(N)=\sum_{n\sim N}\lambda_{\pi}(n,1)\lambda_f(n)e\left(t\varphi\bigg(\frac{n}{N}\bigg)\right),
\ena
where we have suppressed the smooth test function $V\left(n/N\right)$ which controls the length $N\leq n\leq 2N$ from the notation (and we also suppress test functions in various sums in this section).
Here the amplitude of the phase function is of size
\bna
t\varphi\bigg(\frac{n}{N}\bigg)\asymp t.
\ena

Our first step is to follow \cite{Mun6} and write
\bna
S(N)=\sum_{n\sim N}\lambda_{\pi}(n,1)\sum_{m\sim N}\lambda_f(m)e\left(t\varphi\bigg(\frac{m}{N}\bigg)\right)\delta(m-n,0),
\ena
and to use some form of delta symbol method to detect the
Kronecker delta symbol $\delta(m-n,0)$. We take a similar approach by using the Duke--Friedlander--Iwaniec's delta method \eqref{DFI's} given in \cite{IK}, but unlike \cite{Mun6} we remove the conductor-decreasing trick \eqref{conductor-lowering}, to get
\bea\label{before-voronoi}
S(N)&=&\int_{-N^{\varepsilon}}^{N^{\varepsilon}}\frac{1}{Q} \sum_{1\leq q\leq Q}\,\sideset{}{^\star}\sum_{a\bmod{q}}\frac{1}{q}
\sum_{n\sim N}\lambda_{\pi}(n,1)e\left(-\frac{na}{q}\right)
e\left(-\frac{n\zeta}{qQ}\right)\nonumber\\
&&\sum_{m\sim N}\lambda_f(m)e\left(\frac{ma}{q}\right)
e\left(t\varphi\bigg(\frac{m}{N}\bigg)+\frac{m\zeta}{qQ}\right)\mathrm{d}\zeta.
\eea
(Notice that the weight function $g(q,\zeta)$ in \eqref{DFI's} behaves almost like the constant function $1$ in the generic range.)

We now use Voronoi summations to dualize the $m$- and $n$-sums. By applying the $\mathrm{GL}_2$-Voronoi summation and taking into account of the oscillation of the $\mathrm{GL}_2$-Bessel function in the dual sum, we get
\bea\label{GL2dual}
\sum_{m\sim N}\lambda_f(m)e\left(\frac{ma}{q}\right)e\left(t\varphi\bigg(\frac{m}{N}\bigg)+\frac{m\zeta}{qQ}\right)V\left(\frac{m}{N}\right)\leftrightarrow \frac{N}{qt^{1/2}}\sum_{\pm}\sum_{m\sim q^2t^2/N}\lambda_f(m)e\left(\mp\frac{m\bar{a}}{q}\right)\Phi^{\pm}\left(m,q,\zeta\right),
\eea
where
 \bna
\Phi^{\pm}\left(m,q,\zeta\right)=\int_0^\infty V(y)
e\left(t\varphi(y)+\frac{\zeta Ny}{qQ}\pm\frac{2\sqrt{mNy}}{q}\right)\mathrm{d}y.
\ena
If we assume for example $\varphi'(y)<0$, then by integration by parts, $\Phi^{-}\left(m,q,\zeta\right)\ll N^{-A}$, and we would only need to consider the plus sign contribution.

Similarly, applying the $\mathrm{GL}_3$-Voronoi summation gives
\bea\label{GL3dual}
\sum_{n\sim N}\lambda_{\pi}(n,1)e\left(-\frac{na}{q}\right)
e\left(-\frac{n\zeta}{qQ}\right)U\left(\frac{n}{N}\right)\leftrightarrow \frac{N}{q^2}\frac{q}{N^{1/3}}
\sum_{n}\frac{\lambda_{\pi}(1,n)}{n^{1/3}}S(\bar{a},n;q)\Psi^{+}\left(n,q,\zeta\right),
\eea
where
\bna
\Psi^{+}\left(n,q,\zeta\right)=\int_0^{\infty}U\left(y\right)
e\left(-\frac{\zeta N y}{qQ}+ 3\left(\frac{N n}{q^3}\right)^{1/3}y^{1/3}\right)\mathrm{d}y.
\ena
We perform a stationary phase argument to get
\bna
\Psi^{+}\left(n,q,\zeta\right)\asymp \frac{q^{1/2}}{n^{1/6}N^{1/6}}e\left(2\left(\frac{Qn}{q^2\zeta}\right)^{1/2}\right)U_T\left(\frac{n^{1/2}Q^{3/2}}{\zeta^{3/2}N}\right),
\ena
where $U_T(x)$ is some smooth function with compact support, which also controls
the length of the dual sum to $n\sim \frac{\zeta^3 N^2}{Q^3}$.
Therefore by applying the $\mathrm{GL}_3$-Voronoi summation we get
\bna
\sum_{n\sim N}\lambda_{\pi}(n,1)e\left(-\frac{na}{q}\right)
e\left(-\frac{n\zeta}{qQ}\right)\leftrightarrow \frac{N^{1/2}}{q^{1/2}}
\sum_{n\sim \frac{\zeta^3 N^2}{Q^3}\ll \frac{N^2}{Q^3}}\frac{\lambda_{\pi}(1,n)}{n^{1/2}}S(\bar{a},n;q)e\left(2\left(\frac{Qn}{q^2\zeta}\right)^{1/2}\right)U_T\left(\frac{n^{1/2}Q^{3/2}}{\zeta^{3/2}N}\right).
\ena

Plugging the dual sums \eqref{GL2dual} and \eqref{GL3dual} back into \eqref{before-voronoi} and switching the orders of integration over $\zeta$ and $y$, we roughly get
\bea\label{intermediateS(N)}
\begin{split}
S(N)\approx& \frac{N^{3/2}}{Qt^{1/2}}\sum_{n\sim \frac{N^2}{Q^3}}\frac{\lambda_{\pi}(1,n)}{n^{1/2}}\sum_{1\leq q\leq Q}\frac{1}{q^{5/2}}\sideset{}{^\star}\sum_{a\bmod{q}}S(a,n;q)\sum_{m\sim q^2t^2/N}\lambda_f(m)e\left(-\frac{ma}{q}\right)\\
&\times\int_0^\infty V(y)
e\left(t\varphi(y)+\frac{2\sqrt{mNy}}{q}\right)\mathcal{K}(y;n,q)\, \mathrm{d}y
\end{split}
\eea
where
\bna
\mathcal{K}(y;n,q)=
\int_{-N^{\varepsilon}}^{N^{\varepsilon}}
U_T\left(\frac{n^{1/2}Q^{3/2}}{\zeta^{3/2}N}\right)
e\left(2\left(\frac{Qn}{q^2\zeta}\right)^{1/2}+\frac{\zeta Ny}{qQ}\right)
\mathrm{d}\zeta.
\ena
Here we have pretended the $n$-sum to be supported on the dyadic range $n\sim \frac{N^2}{Q^3}$.

We evaluate the integral $\mathcal{K}(y;n,q)$ using the stationary phase method
\bna
\mathcal{K}(y;n,q)\asymp Q\left(\frac{q^3n}{N^5}\right)^{1/6}e\left(\frac{3(nNy)^{1/3}}{q}\right)\widetilde{U}_T(y),
\ena
for some smooth compactly supported function $\widetilde{U}_T(y)$.

Hence putting things together, \eqref{intermediateS(N)} gets transformed to
\bna
\begin{split}
S(N)\approx& \frac{N^{2/3}}{t^{1/2}}\sum_{n\sim \frac{N^2}{Q^3}}\frac{\lambda_{\pi}(1,n)}{n^{1/3}}\sum_{1\leq q\leq Q}\frac{1}{q^{2}}\sum_{m\sim q^2t^2/N}\lambda_f(m)\sideset{}{^\star}\sum_{a\bmod{q}}S(a,n;q)e\left(-\frac{ma}{q}\right)\\
&\times\int_0^\infty V(y)\widetilde{U}_T(y)
e\left(t\varphi(y)+\frac{2\sqrt{mNy}}{q}+\frac{3(nNy)^{1/3}}{q}\right)\, \mathrm{d}y.
\end{split}
\ena
Note that the sum over $a$ can be evaluated explicitly,
\bna
\sideset{}{^\star}\sum_{a\bmod{q}}S(a,n;q)e\left(-\frac{ma}{q}\right)\approx q\,e\left(\frac{\overline{m}n}{q}\right).
\ena
Therefore, we arrive at
\begin{equation*}
S(N)\approx \frac{N^{2/3}}{t^{1/2}}\sum_{n\sim \frac{N^2}{Q^3}}\frac{\lambda_{\pi}(1,n)}{n^{1/3}}\sum_{1\leq q\leq Q}\frac{1}{q}\sum_{m\sim q^2t^2/N}\lambda_f(m)e\left(\frac{\overline{m}n}{q}\right)\mathfrak{J}^{\sharp}(n,m,q),
\end{equation*}
where
\bea\label{J-integral0}
\mathfrak{J}^{\sharp}(n,m,q)=\int_0^\infty V(y)\widetilde{U}_T(y)
e\left(t\varphi(y)+\frac{2\sqrt{mNy}}{q}+\frac{3(nNy)^{1/3}}{q}\right)\, \mathrm{d}y.
\eea
\begin{remark}If we were to apply the bound
\bna
\mathfrak{J}^{\sharp}(n,m,q)\ll t^{-1/2}
\ena
which follows from the second derivative test and then estimate $S(N)$ trivially, we would have obtained the bound
\bna
S(N)\ll Nt,
\ena
which falls short of $O(tN^{\eta})$ from the target bound $O(N^{1-\eta})$.
\end{remark}

To prepare for an application of the Poisson summation in the $n$-variable, we now apply the Cauchy--Schwarz inequality to smooth the $n$-sum and putting all the other sums inside the absolute value squared,
\begin{equation*}
S(N)\ll \frac{N^{2/3}}{t^{1/2}}\bigg(\sum_{n\sim \frac{N^2}{Q^3}}
\frac{|\lambda_{\pi}(1,n)|^2}{n^{2/3}}\bigg)^{1/2}\bigg(\sum_{n\sim \frac{N^2}{Q^3}}\bigg|\sum_{1\leq q\leq Q}\frac{1}{q}\sum_{m\sim q^2t^2/N}\lambda_f(m)e\left(\frac{\overline{m}n}{q}\right)\mathfrak{J}^{\sharp}(n,m,q)\bigg|^2\bigg)^{1/2}.
\end{equation*}
\begin{remark}
If we open the absolute value squared, the contribution from the diagonal term $(q,m)=(q',m')$ is given by
\begin{equation}\label{S-diagonal}
\begin{split}
S_{\text{diag}}\ll& \frac{N^{2/3}}{t^{1/2}}\bigg(\sum_{n\sim \frac{N^2}{Q^3}}\frac{|\lambda_{\pi}(1,n)|^2}{n^{2/3}}\bigg)^{1/2}\bigg(\sum_{n\sim \frac{N^2}{Q^3}}\sum_{1\leq q\leq Q}\frac{1}{q^2}\sum_{m\sim q^2t^2/N} |\lambda_f(m)|^2 |\mathfrak{J}^{\sharp}(n,m,q)|^2\bigg)^{1/2}\\
\ll& N\left(\frac{N}{Q^3}\right)^{1/2},
\end{split}
\end{equation}
which will be fine for our purpose (i.e., $S_{\text{diag}}=o(N)$) as long as $Q\gg N^{1/3}$.
\end{remark}

We remark before continuing that the oscillation of $\mathfrak{J}^{\sharp}(n,m,q)$ in \eqref{J-integral0}, in the $n$-variable is of size $\frac{3(nNy)^{1/3}}{q}\approx \frac{N}{qQ}$.
Now we open the absolute value squared and apply the Poisson summation in the $n$-variable, getting
\begin{equation*}
\begin{split}
\sum_{n\sim \frac{N^2}{Q^3}}e\left(\frac{\overline{m}n}{q}\right)e\left(-\frac{\overline{m'}n}{q'}\right)\mathfrak{J}^{\sharp}(n,m,q)\mathfrak{J}^{\sharp}(n,m',q')\leftrightarrow \frac{N^2}{Q^3}\sum_{\tilde{n}\ll \frac{qq'\frac{N}{qQ}}{N^2/Q^3}}\mathfrak{K}(\tilde{n})\,\mathfrak{I}\left(\frac{\frac{N^2}{Q^3}\tilde{n}}{qq'}\right),
\end{split}
\end{equation*}
where
\bea\label{correlation-integral}
\mathfrak{I}(x)=\int_{\mathbb{R}}
\mathfrak{J}^{\sharp}\left(N^2\xi/Q^3,m,q\right)
\overline{\mathfrak{J}^{\sharp}\left(N^2\xi/Q^3,m',q'\right)}
\, e\left(-x\xi\right)\mathrm{d}\xi
\eea
and
\bna
\begin{split}
\mathfrak{K}(\tilde{n})=&\frac{1}{qq'}\sum_{\beta(qq')}e\left(\frac{\overline{m}\beta}{q}\right)e\left(-\frac{\overline{m'}\beta}{q'}\right)e\left(\frac{\beta \tilde{n}}{qq'}\right)
\end{split}
\ena
which is $\approx \delta_{m\equiv -q'\overline{\tilde{n}}\bmod{q}}\, \delta_{m'\equiv q\overline{\tilde{n}}\bmod{q'}}$ for $\tilde{n}\neq 0$, and would imply $q=q'$ and $m\equiv m'\bmod{q}$ if $\tilde{n}=0$.

The contribution to $S(N)$ from the zero-frequency $\tilde{n}=0$
will roughly correspond to the diagonal contribution $S_{\text{diag}}$ in \eqref{S-diagonal}.
So we continue to analyze the case $\tilde{n}\neq 0$.


For the triple integral $\mathfrak{I}(x)$ in \eqref{correlation-integral}, we expect, by performing stationary phase analysis, that when $|x|$ is ``large", in addition to square-root cancellation
\bna
\mathfrak{J}^{\sharp}\left(N^2\xi/Q^3,m,q\right)\ll t^{-1/2}
\ena
 for each of the inner integrals, the square-root cancellation in the $\xi$-variable
\bna
\mathfrak{I}(x)\ll t^{-1/2}\cdot t^{-1/2}\cdot |x|^{-1/2}
\ena
should hold.
This bound is not true if $|x|$ is ``small." We can effectively control those exceptional cases by using the bound $\mathfrak{I}(x)\ll t^{-1/2}\cdot t^{-1/2}\cdot |x|^{-1/3}$ which follows from the third derivative test provided that $|x|$ is in some intermediate range, and also by the ``trivial" bound $\mathfrak{I}(x)\ll t^{-1/2}\cdot t^{-1/2}$ in the case $|x|$ is ``very small." (These are the content of Lemma \ref{integral:lemma}.)

By plugging all these analysis back into $S(N)$, it turns out that the non-zero frequencies contribution $\Sigma_{\neq 0}$ from $\tilde{n}\neq 0$ to $S(N)$ is given by
\bna
\Sigma_{\neq 0}=O\left(N^{1/4}Qt\right).
\ena
Hence combining this with the diagonal contribution $S_{\text{diag}}$ in \eqref{S-diagonal}, we get
\bna
S(N)\ll N\left(\frac{N}{Q^3}\right)^{1/2}+N^{1/4}Qt.
\ena
By choosing $Q=N^{1/2}/t^{2/5}$ we obtain $S(N)\ll N^{3/4}t^{3/5}$, which improves over the trivial bound $S(N)\ll N$ as long as $t^{12/5}\ll N$.

\section{A review of automorphic forms}\label{review-of-cuspform}

\subsection{Hecke cusp forms for $\mathrm{GL}_2$}

\subsubsection{Holomorphic cusp forms for $\mathrm{GL}_2$}

Let $f$ be a holomorphic cusp form of weight $\kappa$ for $\rm SL_2(\mathbb{Z})$
with Fourier expansion
\bna
f(z)=\sum_{n=1}^{\infty}\lambda_f(n)n^{(\kappa-1)/2}e(nz)
\ena
for $\mbox{Im}\,z>0$. We assume that $f$ is a normalized Hecke eigenform
so that $\lambda_f(1)=1$.
By the Ramanujan--Petersson conjecture, which is proved by Deligne \cite{Del} for $\kappa\geq 2$ and by Deligne--Serre \cite{Del-Ser} for the case $\kappa=1$,
we have $\lambda_f(n)\ll \tau(n)$ with $\tau(n)$ being the divisor function.
Moreover, we have the Wilton-type bound (see \cite[Theorem 5.3]{Iwaniec})
\bea\label{additive-holo}
\sum_{n\leq X}\lambda_f(n)e(n \alpha)\ll_{f} X^{1/2}\log (2X),
\eea
which holds uniformly for any $\alpha\in \mathbb{R}$. The $\log (2X)$ factor can be removed; see \cite{Jutila1}.

For $h(x)\in \mathcal{C}_c(0,\infty)$, we set
\bea\label{intgeral transform-1}
\Phi_h(x) =2\pi i^{\kappa} \int_0^{\infty} h(y) J_{\kappa-1}(4\pi\sqrt{xy})\mathrm{d}y,
\eea
where $J_{\kappa-1}$ is the usual $J$-Bessel function of order $\kappa-1$.
We have the following Voronoi summation formula (see \cite[Theorem A.4]{KMV}).

\begin{lemma}\label{voronoiGL2-holomorphic}
Let $q\in \mathbb{N}$ and $a\in \mathbb{Z}$ be such
that $(a,q)=1$. For $X>0$, we have
\bna\label{voronoi for holomorphic}
\sum_{n=1}^{\infty}\lambda_f(n)e\left(\frac{an}{q}\right)h\left(\frac{n}{X}\right)
=\frac{X}{q} \sum_{n=1}^{\infty}\lambda_f(n)
e\left(-\frac{\overline{a}n}{q}\right)\Phi_h\left(\frac{nX}{q^2}\right),
\ena
where $\overline{a}$ denotes
the multiplicative inverse of $a$ modulo $q$.
\end{lemma}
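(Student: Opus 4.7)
My plan is to prove this Voronoi identity through Mellin inversion combined with the functional equation of the additively twisted Hecke $L$-function
\[
L(s,f,a/q) = \sum_{n\geq 1}\lambda_f(n)\,e(an/q)\,n^{-s},
\]
which converges absolutely for $\Re(s)>1$ by Deligne's bound. First I would write $h(y) = \frac{1}{2\pi i}\int_{(\sigma)}\widetilde{h}(s)y^{-s}\,ds$ for some $\sigma>1$, where $\widetilde{h}(s)=\int_0^\infty h(y)y^{s-1}\,dy$. Since $h$ has compact support in $(0,\infty)$, the Mellin transform $\widetilde{h}$ is entire with rapid decay in vertical strips, which justifies interchanging summation and integration to obtain
\[
\sum_{n\geq 1}\lambda_f(n)e(an/q)h(n/X) = \frac{1}{2\pi i}\int_{(\sigma)}\widetilde{h}(s)X^{s}L(s,f,a/q)\,ds.
\]

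The heart of the argument is the functional equation
\[
\bigl(q/(2\pi)\bigr)^{s}\Gamma\!\bigl(s+\tfrac{\kappa-1}{2}\bigr)L(s,f,a/q) = i^{\kappa}\bigl(q/(2\pi)\bigr)^{1-s}\Gamma\!\bigl(1-s+\tfrac{\kappa-1}{2}\bigr)L(1-s,f,-\bar{a}/q).
\]
I would establish this either by decomposing $e(an/q)$ via Gauss sums into Dirichlet characters modulo $q$ and invoking the standard functional equations for the $L(s,f,\chi)$, or more directly by exploiting the automorphy of $f$ under a matrix $\gamma=\begin{pmatrix}a & * \\ q & *\end{pmatrix}\in\mathrm{SL}_2(\mathbb{Z})$ and comparing Mellin transforms of $f|_{\kappa}\gamma$ at the two cusps. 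Shifting the contour from $\Re(s)=\sigma$ to $\Re(s)=1-\sigma$ meets no poles since $f$ is cuspidal, and substituting the functional equation exposes the dual spectral side.

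A second interchange of summation and integration rewrites the result as
\[
\frac{X}{q}\sum_{n\geq 1}\lambda_f(n)e(-\bar{a}n/q)\cdot\frac{1}{2\pi i}\int_{(\sigma)}\widetilde{h}(s)\Bigl(\frac{nX}{q^{2}}\Bigr)^{s-1}i^{\kappa}(2\pi)^{2s-1}\frac{\Gamma(1-s+(\kappa-1)/2)}{\Gamma(s+(\kappa-1)/2)}\,ds.
\]
Invoking the classical Mellin-transform identity
\[
\int_{0}^{\infty}J_{\kappa-1}(4\pi\sqrt{xy})\,y^{s-1}\,dy = \frac{1}{(2\pi)^{2s}x^{s}}\cdot\frac{\Gamma(s+(\kappa-1)/2)}{\Gamma((\kappa+1)/2-s)}
\]
together with the reflection $\Gamma(1-s+(\kappa-1)/2)=\Gamma((\kappa+1)/2-s)$, one recognizes the inner integral as $\Phi_{h}(nX/q^{2})$ in the notation of \eqref{intgeral transform-1}, completing the identification.

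The main obstacle will be the clean derivation of the functional equation, with the correct root number $i^{\kappa}$ and the switch $a/q\mapsto -\bar{a}/q$ of the rational parameter. The character-decomposition route is conceptually transparent but requires careful bookkeeping of Gauss sums at primes dividing $q$, whereas the modular route demands attention to the weight-$\kappa$ slash action and to a choice of cusp representatives. Beyond this, the remaining analytic ingredients---absolute convergence of the Dirichlet series, the contour shift, and the two interchanges of sum and integral---are routine given Deligne's bound and the Paley--Wiener-type decay of $\widetilde{h}$.
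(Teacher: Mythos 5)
The paper does not prove this lemma; it simply cites \cite[Theorem~A.4]{KMV}, so there is no in-paper argument to compare against. Your proposed route---Mellin inversion combined with the functional equation of the additive twist $L(s,f,a/q)$, derived from modularity under $\gamma=\left(\begin{smallmatrix}a&b\\q&\bar a\end{smallmatrix}\right)\in\mathrm{SL}_2(\mathbb Z)$---is the standard one and is essentially what the cited reference does (KMV also argue via the modular transformation rather than Gauss-sum decomposition, which as you correctly anticipate runs into bookkeeping issues at $n$ with $(n,q)>1$).

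A few small points worth tightening. First, in your penultimate display the contour should be $(1-\sigma)$, not $(\sigma)$: after the shift and the interchange you are still on the line $\Re(s)=1-\sigma$, where $L(1-s,f,-\bar a/q)$ converges absolutely. Second, the Mellin--Bessel identity $\int_0^\infty J_{\kappa-1}(4\pi\sqrt{xy})y^{s-1}\,dy=(2\pi)^{-2s}x^{-s}\Gamma(s+\tfrac{\kappa-1}{2})/\Gamma(\tfrac{\kappa+1}{2}-s)$ (which your arithmetic correctly produces, after substituting $s\mapsto 1-s$) converges absolutely only in the strip $-\tfrac{\kappa-1}{2}<\Re(s)<\tfrac34$; to recognize the inner contour integral as $\tfrac{X}{q}\Phi_h(nX/q^2)$ you therefore need one further contour shift from $\Re(s)=1-\sigma$ into, say, $\Re(s)=\tfrac12$. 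Since $\widetilde h$ is entire with rapid decay and the gamma ratio $\Gamma(\tfrac{\kappa+1}{2}-s)/\Gamma(s+\tfrac{\kappa-1}{2})$ has no poles in the relevant strip (its first pole is at $\Re(s)=\tfrac{\kappa+1}{2}\geq\tfrac{13}{2}$ here since $\kappa\geq12$ for level one), this shift is free. Third, the constraint $\sigma<\tfrac{\kappa+1}{2}$ required for the various strips to overlap fails only at $\kappa=1$, which is vacuous for $\mathrm{SL}_2(\mathbb Z)$; still, it is worth flagging when quoting the identity for general level. With these adjustments the argument is complete and correctly reproduces the kernel $\Phi_h(x)=2\pi i^\kappa\int_0^\infty h(y)J_{\kappa-1}(4\pi\sqrt{xy})\,dy$ of \eqref{intgeral transform-1}, including the root number $i^\kappa$ and the shift $a/q\mapsto-\bar a/q$.
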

It is known that for integer $v$, $J_{v}(x)$ can be expressed as
\bna
J_{v}(x)=x^{-1/2}\left(e^{ix}W_{v,+}(x)+
e^{-ix}W_{v,-}(x)\right)
\ena
where $W_{v,+}$ satisfies $x^jW_{v,\pm}^{(j)}(x)\ll_{v,j}1$ for $x\gg 1$ (see \cite[Sec. 4.5]{HMQ}).
We revisit this and derive an asymptotic formula for
$\Phi_h(x)$ when $x\gg 1$.

\begin{lemma}\label{voronoiGL2-holomorphic-asymptotic}
For any fixed integer $J\geq 1$ and $x\gg 1$, we have
\bna
\Phi_h(x)=x^{-1/4} \int_0^\infty h(y)y^{-1/4}
\sum_{j=0}^{J}
\frac{c_{j} e(2 \sqrt{xy})+d_{j} e(-2 \sqrt{xy})}
{(xy)^{j/2}}\mathrm{d}y
+O_{\kappa,J}\left(x^{-J/2-3/4}\right),
 \ena
where $c_{j}$ and $d_{j}$ are constants depending on $\kappa$.
\end{lemma}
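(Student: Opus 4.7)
My plan is to reduce the lemma to the classical Hankel-type asymptotic expansion of the $J$-Bessel function, after which the result follows by direct substitution into the defining integral for $\Phi_h(x)$. The starting point is the refinement, for each fixed integer $v$ and each integer $J\ge 1$, of the bound $z^jW_{v,\pm}^{(j)}(z)\ll_{v,j}1$ cited in the paper, to an honest asymptotic expansion
\begin{equation*}
J_v(z)=z^{-1/2}\Bigl(e^{iz}\sum_{j=0}^{J}\frac{\alpha_{v,j}}{z^j}+e^{-iz}\sum_{j=0}^{J}\frac{\beta_{v,j}}{z^j}\Bigr)+O_{v,J}(z^{-J-3/2}),\qquad z\gg 1,
\end{equation*}
with explicit constants $\alpha_{v,j},\beta_{v,j}$ (this is standard, e.g.\ Watson's treatise, and is obtained either by iterating the differential equation for $W_{v,\pm}$ or from Hankel's integral representation).

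The second step is to substitute $z=4\pi\sqrt{xy}$. This turns $z^{-1/2-j}$ into a constant multiple of $(xy)^{-1/4-j/2}$, the exponentials $e^{\pm iz}$ into $e(\pm 2\sqrt{xy})$, and, after pulling the factor $x^{-1/4}$ outside the integral, absorbing all residual powers of $4\pi$ and the prefactor $2\pi i^{\kappa}$ into new $\kappa$-dependent constants $c_{j},d_{j}$, the main term collapses exactly to the form claimed. The error term from the Bessel expansion contributes
\begin{equation*}
\ll_{\kappa,J}\;x^{-J/2-3/4}\int_0^\infty |h(y)|\,y^{-J/2-3/4}\,\mathrm{d}y,
\end{equation*}
and since $h\in\mathcal{C}_c(0,\infty)$ has support bounded away from the origin, this last integral is finite, yielding the stated error $O_{\kappa,J}(x^{-J/2-3/4})$.

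No genuine obstacle arises: the only care required is to check that the large-$z$ expansion is valid uniformly in $y$ across the compact support of $h$, which is immediate from $4\pi\sqrt{xy}\gg 1$ once $x\gg 1$ and $y$ is bounded below. The bookkeeping of the constants $c_j,d_j$ is purely routine and need not be tracked explicitly, since the lemma only asserts their existence as constants depending on $\kappa$.
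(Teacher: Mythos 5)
Your proposal is correct and follows essentially the same route as the paper: both start from the Hankel-type asymptotic expansion of $J_v(z)$ for large $z$ (the paper derives it from Watson's Section 7.21 in the cosine/sine form and then recombines into complex exponentials, which is your equation), substitute $z=4\pi\sqrt{xy}$ into the definition \eqref{intgeral transform-1}, and estimate the error term using that $h$ has compact support in $(0,\infty)$. The only difference is cosmetic—you state the exponential form of the Bessel asymptotic directly rather than passing through the intermediate $\cos/\sin$ form.
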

\begin{proof}
 By Section 7.21 in \cite{Wat}, for $x\gg 1$,we have
 \bna
J_{v}(x)&=&\sqrt{\frac{2}{\pi x}}
\cos\left(x-\frac{\pi}{2}v-\frac{\pi}{4}\right)
\left\{\sum_{j=0}^{J-1}\frac{(-1)^j(v,2j)}{(2x)^{2j}}
+O_{n,v}\left(x^{-2J}\right)\right\}\nonumber\\
&&+\sqrt{\frac{2}{\pi x}}\sin\left(x-\frac{\pi}{2}v-\frac{\pi}{4}\right)
\left\{\sum_{j=0}^{J-1}\frac{(-1)^j(v,2j+1)}{(2x)^{2j+1}}
+O_{n,v}\left(x^{-2J-1}\right)\right\},
\ena
 where $(v,0)=1$ and for $j\geq 1$,
\bna
(v,j)=\frac{\Gamma(v+j+1/2)}{j!\Gamma(v-j+1/2)}
=\frac{\{4v^2-1^2\}\{4v^2-3^2\}\cdots \{4v^2-(2j-1)^2\}}{2^{2j}j!}.
\ena
 Thus for $x\gg 1$,
 \bea\label{BoundOfJ}
 J_{v}(x)=x^{-1/2} \sum_{j=0}^{J}
 \frac{c_j e\left(x/2\pi\right)+d_j e\left(-x/2\pi\right)}{x^j}
 +O_{v,J}\left(x^{-3/2-J}\right),
 \eea
 where $c_j=c_j(v)$ and $d_j=d_j(v)$, $j=0,1,2,\ldots,J$ are constants depending only on $v$.
 In particular, $c_0=(2\pi)^{-1/2}e(-(2v+1)/8)$ and $d_0=(2\pi)^{-1/2}e((2v+1)/8)$.
Plugging this formula into \eqref{intgeral transform-1}, we have
 \bna
\Phi_h(x)= \int_0^\infty h(y)(xy)^{-\frac{1}{4}}
\sum_{j=0}^{J}
\frac{c'_{j} e(2 \sqrt{xy})+d'_{j} e(-2 \sqrt{xy})}
{(xy)^{j/2}}\mathrm{d}y
+O_{\kappa,J}\left(x^{-J/2-3/4}\right),
 \ena
 where $c'_{j}$ and $d'_{j}$ are constants depending on $\kappa$. The lemma follows.

\end{proof}

\begin{remark}

For $x\ll 1$, usually the bound $x^jJ_{v}^{(j)}(x)\ll_{v,j}1$ would be sufficient in applications.
\end{remark}

\medskip

\subsubsection{Maass cusp forms for $\mathrm{GL}_2$}

Let $f$ be a Hecke--Maass cusp form for $\rm SL_2(\mathbb{Z})$
with Laplace eigenvalue $1/4+\mu^2$. Then $f$ has a Fourier expansion
$$
f(z)=\sqrt{y}\sum_{n\neq 0}\lambda_f(n)K_{i\mu}(2\pi |n|y)e(nx),
$$
where $K_{i\mu}$ is the modified Bessel function of the third kind.
The Ramanujan--Petersson conjecture, which asserts that
$\lambda_f(n)=O_{\mu,\varepsilon}(n^{\varepsilon})$, is
not known yet.
We do not need to make use of such an individual bound. Rather, the following
average bound\footnote{Recently Huang had an improvement on the error term; see \cite{Huang}.}
\bea\label{GL2: Rankin Selberg}
\sum_{n\leq X}|\lambda_f(n)|^2= c_{f} X+O\big(X^{3/5}\big)
\eea
which follows from the Rankin--Selberg theory and Landau's lemma (see for instance \cite[Lemma 1]{Murty}),
would be sufficient for our purpose. Actually, in order to make use of \eqref{GL2: Rankin Selberg} in replace of the Ramanujan--Petersson conjecture, later we need to perform some counting arguments more carefully; see Section \ref{counting-part}.

Similar to \eqref{additive-holo}, we recall (see \cite[Theorem 8.1]{Iwaniec1})
\bea\label{additive-Maass}
\sum_{n\leq X}\lambda_f(n)e(n \alpha)\ll_{f} X^{1/2}\log (2X),
\eea
which holds uniformly for any $\alpha\in \mathbb{R}$ (the factor $\log (2X)$
was recently removed by J\"{a}\"{a}saari and
Vesalainen \cite{J-V}).

For $h(x)\in \mathcal{C}_c(0,\infty)$, we define the integral transforms
\bea\label{intgeral transform-2}
\begin{split}
\Phi_h^+(x) =& \frac{-\pi}{\sin(\pi i\mu)} \int_0^\infty h(y)\left(J_{2i\mu}(4\pi\sqrt{xy})
- J_{-2i\mu}(4\pi\sqrt{xy})\right) \mathrm{d}y,\\
\Phi_h^-(x) =& 4\varepsilon_f\cosh(\pi \mu)\int_0^\infty h(y)K_{2i\mu}(4\pi\sqrt{xy}) \mathrm{d}y,
\end{split}\eea
where $\varepsilon_f$ the an eigenvalue under the reflection operator.
We have the following Voronoi summation formula (see \cite[Theorem A.4]{KMV}).

\begin{lemma}\label{voronoiGL2-Maass}
Let $q\in \mathbb{N}$ and $a\in \mathbb{Z}$ be such
that $(a,q)=1$. For $X>0$, we have
\bna\label{voronoi for Maass form}
\sum_{n=1}^{\infty}\lambda_f(n)e\left(\frac{an}{q}\right)h\left(\frac{n}{X}\right)
= \frac{X}{q} \sum_{\pm}\sum_{n=1}^{\infty}\lambda_f(n)
e\left(\mp\frac{\overline{a}n}{q}\right)\Phi_h^{\pm}\left(\frac{nX}{q^2}\right),
\ena
where $a \overline{a} \equiv 1(\bmod q)$.
\end{lemma}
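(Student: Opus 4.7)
The plan is to reduce the identity to the functional equation of the additively twisted $L$-function, following the Mellin transform approach. First I would write the left-hand side via Mellin inversion as
\begin{equation*}
\frac{1}{2\pi i}\int_{(2)}\tilde h(s)\, X^{s}\, L_{a/q}(s,f)\, ds,
\end{equation*}
where $\tilde h(s)=\int_0^\infty h(y) y^{s-1}\,dy$ is the Mellin transform of $h$ (entire and of rapid decay in vertical strips because $h \in \mathcal{C}_c(0,\infty)$), and $L_{a/q}(s,f)=\sum_{n\geq 1}\lambda_f(n) e(an/q) n^{-s}$ is the additively twisted $L$-series. Absolute convergence on $\mathrm{Re}(s)=2$ is guaranteed by the Rankin--Selberg estimate \eqref{GL2: Rankin Selberg} together with partial summation.

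Next I would invoke the functional equation of $L_{a/q}(s,f)$, which comes from the automorphy of $f$ under a matrix $\gamma=\begin{pmatrix} a & * \\ q & -\overline{a}\end{pmatrix}\in \mathrm{SL}_2(\mathbb{Z})$ that identifies the cusps attached to $a/q$ and $-\overline{a}/q$. For a Maass form of spectral parameter $\mu$ and reflection eigenvalue $\varepsilon_f$, this equation takes the schematic shape
\begin{equation*}
L_{a/q}(s,f)=q^{2s-1}\sum_{\pm}G_{\pm}(s;\mu,\varepsilon_f)\,L_{\mp \overline{a}/q}(1-s,f),
\end{equation*}
with $G_{\pm}(s;\mu,\varepsilon_f)$ explicit combinations of $\Gamma_\mathbb{R}(s\pm i\mu)$ and trigonometric factors. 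Since $f$ is cuspidal, $L_{a/q}(s,f)$ continues to an entire function of polynomial growth in vertical strips (Phragm\'en--Lindel\"of), so I may shift the contour from $(2)$ to $(-1)$ without crossing poles.

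On the shifted line I would apply the functional equation, substitute $w=1-s$, and interchange summation with integration to reach
\begin{equation*}
\frac{X}{q}\sum_{\pm}\sum_{n\geq 1}\lambda_f(n)\, e\!\left(\mp\frac{\overline{a}\, n}{q}\right)\cdot\frac{1}{2\pi i}\int_{(2)}G_{\pm}(1-w;\mu,\varepsilon_f)\,\tilde h(1-w)\,\left(\frac{nX}{q^2}\right)^{-w}dw.
\end{equation*}
The inner Mellin--Barnes integral should be recognized as $\Phi_h^{\pm}(nX/q^2)$ by invoking the standard Mellin transforms
\begin{equation*}
\int_0^\infty J_\nu(x)\,x^{s-1}dx=2^{s-1}\frac{\Gamma((\nu+s)/2)}{\Gamma((\nu-s)/2+1)},\qquad \int_0^\infty K_\nu(x)\,x^{s-1}dx=2^{s-2}\Gamma\!\left(\tfrac{s+\nu}{2}\right)\Gamma\!\left(\tfrac{s-\nu}{2}\right),
\end{equation*}
which will match the gamma-ratios in $G_{\pm}$ against the kernels $J_{\pm 2i\mu}(4\pi\sqrt{xy})$ and $K_{2i\mu}(4\pi\sqrt{xy})$ appearing in \eqref{intgeral transform-2}.

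The only substantive obstacle is the precise bookkeeping of the two archimedean factors $G_{\pm}(s;\mu,\varepsilon_f)$: one must disentangle the even and odd parts of $f$ (the two eigenspaces of the reflection $z\mapsto -\bar z$), recover the explicit constants $-\pi/\sin(\pi i\mu)$ and $4\varepsilon_f\cosh(\pi\mu)$ of \eqref{intgeral transform-2} via the reflection identity $\Gamma(z)\Gamma(1-z)=\pi/\sin(\pi z)$ together with $\cos(i\pi\mu)=\cosh(\pi\mu)$ and $\sin(i\pi\mu)=i\sinh(\pi\mu)$, and track the signs $\mp\overline{a}$ in the exponentials coming from the two diagonal entries of $\gamma$. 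Once this computation is carried out, the identity coincides verbatim with the statement of \cite[Theorem A.4]{KMV}.
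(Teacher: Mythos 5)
The paper gives no proof of this lemma; it simply cites Theorem A.4 of Kowalski--Michel--Vanderkam \cite{KMV}. Your Mellin-inversion / functional-equation sketch is exactly the standard derivation that this reference (and essentially every proof of a Voronoi formula of this type) follows, so the route you propose is the correct one and the structure of the argument is sound: Mellin inversion, the additive-twist functional equation obtained from automorphy under $\gamma=\left(\begin{smallmatrix} a & * \\ q & -\overline{a}\end{smallmatrix}\right)$, a contour shift, and Mellin recognition of the Bessel kernels. One point to repair: you assert that $\tilde h$ has rapid decay in vertical strips because $h\in\mathcal{C}_c(0,\infty)$, but mere continuity only gives boundedness of $\tilde h$; rapid decay needs $h$ smooth (each integration by parts costs one derivative of $h$ and gains a factor $|s|^{-1}$), and this smoothness is what justifies the contour shift past $\mathrm{Re}(s)=0$, the interchange of sum and integral, and the absolute convergence of the dual sum in the oscillatory $J$-Bessel term. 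In fact \cite[Theorem A.4]{KMV} is stated for smooth compactly supported test functions, and the paper only ever applies the lemma with smooth $h$, so in context this is a harmless imprecision, but your proof should state the hypothesis correctly. Apart from this, the schematic functional equation with the $\mp\overline{a}/q$ shift and the two $\Gamma$-ratios $G_{\pm}$, together with the quoted Mellin transforms of $J_\nu$ and $K_\nu$ and the reflection formula, are exactly the right ingredients and do recover the constants $-\pi/\sin(\pi i\mu)$ and $4\varepsilon_f\cosh(\pi\mu)$ in \eqref{intgeral transform-2}.
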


By the integral representation of $K_{v}(z)$ (see \cite[(8.432-8)]{GR}), for
$| \arg z|<\pi$, $\Re(v)>-1/2$, one has
$$
K_{v}(z)=\sqrt{\frac{\pi}{2z}}\frac{e^{-z}}{\Gamma(v+1/2)}
\int_0^{\infty}e^{-u}u^{v-1/2}\left(1+\frac{u}{2z}\right)^{v-1/2}\mathrm{d}u.
$$
Thus for $v\in i\mathbb{R}$ and $z\gg 1$, we have
\bea\label{BoundOfK}
K_{v}(z)\ll_{v} z^{-1/2}e^{-z}.
\eea
For $x\gg 1$, by \eqref{intgeral transform-2} and \eqref{BoundOfK}, it is easily seen that
\bea\label{The $-$ case}
\Phi_h^-(x)\ll_{\mu,A}x^{-A}.
\eea
For $\Phi_h^+(x)$ and $x\gg 1$, we have a similar asymptotic formula as for
$\Phi_h(x)$ in the holomorphic case, namely:
\begin{lemma}\label{voronoiGL2-Maass-asymptotic}
For any fixed integer $J\geq 1$ and $x\gg 1$, we have
\bna
\Phi_h^{+}(x)=x^{-1/4} \int_0^\infty h(y)y^{-1/4}
\sum_{j=0}^{J}
\frac{c_{j} e(2 \sqrt{xy})+d_{j} e(-2 \sqrt{xy})}
{(xy)^{j/2}}\mathrm{d}y
+O_{\mu,J}\left(x^{-J/2-3/4}\right),
 \ena
where $c_{j}$ and $d_{j}$ are some constants depending on $\mu$.
\end{lemma}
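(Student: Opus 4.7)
The plan is to follow the proof of Lemma \ref{voronoiGL2-holomorphic-asymptotic} essentially verbatim, the key observation being that the Hankel-type asymptotic expansion \eqref{BoundOfJ} for $J_v(x)$ at $x \gg 1$, which was invoked there for integer $v$, in fact holds for any fixed complex order $v$ with constants $c_j = c_j(v)$, $d_j = d_j(v)$ depending only on $v$. This is the standard consequence of the Hankel loop-integral representation established in Section 7.21 of Watson, and it is uniform in $v$ on compact subsets of $\mathbb{C}$. I would first record this general version of \eqref{BoundOfJ}.

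Applied to $v = \pm 2i\mu$, it gives two expansions which we substitute into
\[
\Phi_h^{+}(x) = \frac{-\pi}{\sin(\pi i \mu)}\int_0^\infty h(y)\Bigl(J_{2i\mu}(4\pi\sqrt{xy}) - J_{-2i\mu}(4\pi\sqrt{xy})\Bigr)\mathrm{d}y.
\]
Pulling the factor $(4\pi\sqrt{xy})^{-1/2}$ out reproduces the $(xy)^{-1/4}$ prefactor in the statement, and each power $(4\pi\sqrt{xy})^{-j}$ turns into $(xy)^{-j/2}$ up to an absolute constant. Taking the difference $J_{2i\mu}-J_{-2i\mu}$ replaces the pair $(c_j(v), d_j(v))$ by $(c_j(2i\mu)-c_j(-2i\mu),\, d_j(2i\mu)-d_j(-2i\mu))$; absorbing the outer $-\pi/\sin(\pi i \mu)$ and the various constant powers of $4\pi$ into these differences defines new constants $c_j, d_j$ depending only on $\mu$, and yields precisely the shape asserted in the lemma. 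The error term from \eqref{BoundOfJ} is $O_{\mu, J}\bigl((4\pi\sqrt{xy})^{-3/2-J}\bigr)$, which, after integration against the compactly supported $h$ (whose support confines $y$ to a fixed compact subset of $(0,\infty)$), produces the claimed $O_{\mu, J}(x^{-J/2-3/4})$.

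The only point that requires care --- and which I regard as the main (if rather minor) obstacle --- is to confirm that the prefactor $1/\sin(\pi i \mu) = 1/(i\sinh(\pi\mu))$ does not blow up when combined with the coefficient differences. At the leading order one has $c_0(v) = (2\pi)^{-1/2}e(-(2v+1)/8)$, so $c_0(2i\mu)-c_0(-2i\mu)$ carries a factor $e^{\pi\mu/2}-e^{-\pi\mu/2}$, of the same sinh-type as $\sin(\pi i \mu)$; the analogous cancellation happens at each order $j$, so the combined constants $-\pi(c_j(2i\mu)-c_j(-2i\mu))/\sin(\pi i \mu)$ are finite and depend only on $\mu$. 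With this verification in hand, the remainder of the argument is routine bookkeeping, directly parallel to the holomorphic case.
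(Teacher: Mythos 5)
Your proposal is correct and follows essentially the same route as the paper, whose proof of this lemma is a single sentence: it says the result follows directly from \eqref{BoundOfJ} together with the definition \eqref{intgeral transform-2} of $\Phi_h^+$. You spell out what that sentence means (the Hankel expansion from Watson \S7.21 holds for general complex order, so it applies to $v = \pm 2i\mu$; subtracting and absorbing constants gives the claimed form), which is the intended content. The one place you go beyond what the paper does — worrying about whether the prefactor $-\pi/\sin(\pi i\mu)$ "blows up" — is not really an obstacle for the lemma as stated, since $\mu$ is fixed for a given Maass form and the implied constants are explicitly allowed to depend on $\mu$ (the error is $O_{\mu,J}$); the $\sinh$-type cancellation you note is a true fact but is only relevant if one wanted uniformity in $\mu$ near $\mu=0$, which the lemma does not claim.
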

\begin{proof}
 The lemma follows from \eqref{BoundOfJ} and \eqref{intgeral transform-2}.
\end{proof}

\subsection{Hecke--Maass cusp forms for $\mathrm{GL}_3$}

Let $\pi$ be a Hecke--Maass cusp form of type $\nu=(\nu_1,\nu_2)$ for $\rm SL_3(\mathbb{Z})$,
which has a Fourier--Whittaker expansion with normalized Fourier coefficients
$\lambda_{\pi}(n_1,n_2)$ (see \cite{Gol}). From the Kim--Sarnak bounds \cite{K}, we have
\bea\label{GL3-Fourier}
\lambda_{\pi}(n_1,n_2)\ll |n_1n_2|^{5/14+\varepsilon}
\eea
for any $\varepsilon>0$.

By Rankin--Selberg theory, we have
\bea\label{GL3-Rankin--Selberg}
\mathop{\sum\sum}_{n_1^2n_2\leq X} \left|\lambda_{\pi}(n_1,n_2)\right|^2=
c_{\pi}X+O\big(X^{4/5}\big),
\eea
where the error term follows from Landau's lemma (see \cite[Lemma 1]{Murty}).

Denote the Langlands parameters of $\pi$ by
\bna
\mu_1=-\nu_1-2\nu_2+1, \qquad \mu_2=-\nu_1+\nu_2,\qquad  \mu_3=2\nu_1+\nu_2-1.
\ena
For $\ell=0,1$ we define
\bna
\gamma_\ell (s)=\frac{1}{2\pi^{3(s+1/2)}}\prod_{j=1}^3
\frac{\Gamma\left((1+s+\mu_j+\ell)/2\right)}
{\Gamma\left((-s-\mu_j+\ell)/2\right)}
\ena
and set $\gamma_{\pm}(s)=\gamma_0(s)\mp i \gamma_1(s)$. Then by Stirling's formula,
for $\sigma\geq -1/2$,
\bea\label{A bound}
\gamma_{\pm}(\sigma+i\tau)\ll_{\pi,\sigma}(1+|\tau|)^{3\left(\sigma+1/2\right)},
\eea
and for $|\tau|\gg N^{\varepsilon}$
(see \cite{Mun1})
\bea\label{Gamma bound-asymptotic}
\gamma_{\pm}\left(-\frac{1}{2}+i\tau\right)
=\left(\frac{|\tau|}{e\pi}\right)^{3i\tau}\Upsilon_{\pm}(\tau),\qquad
\mathrm{where}\qquad
\Upsilon'_{\pm}(\tau)\ll \frac{1}{|\tau|}.
\eea

For $g(x)\in \mathcal{C}_c^\infty(0,\infty)$ we
denote by $\widetilde{g}(s)$
the Mellin transform of $g(x)$.
Let
\bea\label{intgeral transform-3}
\Psi_g^{\pm}\left(x\right)
=\frac{1}{2\pi i}\int_{(\sigma)}x^{-s}
\gamma_{\pm}(s)\widetilde{g}(-s)\mathrm{d}s,
\eea
where $\sigma>\max\limits_{1\leq j\leq 3}\{-1-\mathrm{Re}(\mu_j)\}$.
Then we have the following Voronoi summation formula (see \cite{MS2, GL1}).

\begin{lemma}\label{voronoiGL3}
Let $q\in \mathbb{N}$ and $a\in \mathbb{Z}$ be such
that $(a,q)=1$. Then
\bna
\sum_{n=1}^{\infty}\lambda_{\pi}\left(n,r\right)e\left(\frac{an}{q}\right)
g\left(n\right)= q\sum_{\pm}\sum_{n_{1}|qr}
\sum_{n_{2}=1}^{\infty}\frac{\lambda_{\pi}\left(n_{1},n_{2}\right)}{n_{1}n_{2}}
S\left(r\overline{a},\pm n_{2};\frac{qr}{n_{1}}\right)
\Psi_g^{\pm}\left(\frac{n_{1}^{2}n_{2}}{q^{3}r}\right),
\ena
where $a \overline{a} \equiv 1(\bmod q)$ and $S(m,n;c)$ is the classical Kloosterman sum.
\end{lemma}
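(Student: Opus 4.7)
The plan is to prove the $\mathrm{GL}_3$ Voronoi summation formula via Mellin inversion combined with the functional equation for the additively twisted $L$-function of $\pi$, following the strategy of Miller--Schmid \cite{MS2} and Goldfeld--Li \cite{GL1}. First I would use Mellin inversion to express
$$\sum_{n\geq 1} \lambda_\pi(n,r)\, e(an/q)\, g(n) = \frac{1}{2\pi i}\int_{(\sigma_0)} \widetilde{g}(-s)\, D_{a,r}(s)\, \mathrm{d}s, \qquad D_{a,r}(s) := \sum_{n=1}^{\infty} \lambda_\pi(n,r)\, e(an/q)\, n^{s},$$
on a line $\Re(s) = \sigma_0$ chosen negative enough for both the sum and the integral to converge absolutely (using $g \in \mathcal{C}_c^{\infty}$ for the rapid decay of $\widetilde{g}(-s)$ on vertical lines).

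The analytic heart of the argument is a functional equation for $D_{a,r}(s)$. Using the $\mathrm{GL}_3$ Hecke relations to reduce $\lambda_\pi(n,r)$ to products $\lambda_\pi(n',1)\lambda_\pi(1,r')$ and decomposing the additive character $e(an/q)$ via orthogonality of Dirichlet characters on divisors of $q$, one writes $D_{a,r}(s)$ as a finite linear combination of twisted $L$-functions $L(s, \pi \otimes \chi)$. Each of these satisfies the Godement--Jacquet functional equation whose gamma-factor ratios assemble into the kernels $\gamma_\pm(s) = \gamma_0(s)\mp i\gamma_1(s)$. Reassembling, and performing the Gauss-sum calculation (where the twisted Gauss sums at the relevant moduli collapse to classical Kloosterman sums), yields an identity of the form
$$D_{a,r}(s) = q\sum_{\pm} \gamma_{\pm}(s) \sum_{n_1 \mid qr}\sum_{n_2 \geq 1} \frac{\lambda_\pi(n_1,n_2)}{n_1 n_2}\, S\!\left(r\overline{a}, \pm n_2; \tfrac{qr}{n_1}\right) \left(\frac{n_1^2 n_2}{q^3 r}\right)^{-s}.$$

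Inserting this into the contour integral of the first step, pulling the $s$-integral through the finite $n_1$-sum and the convergent $n_2$-sum, and recognizing for each $(n_1, n_2)$ that the resulting integrand is precisely $\Psi_g^{\pm}(n_1^2 n_2/(q^3 r))$ by definition \eqref{intgeral transform-3} delivers the claimed formula.

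The principal obstacle will be justifying the interchange of summation and contour integration after applying the functional equation, since the dual Dirichlet series only converges absolutely in a left half-plane. This requires a careful contour shift from $\sigma_0$ into a strip where the dual side converges absolutely, showing that the shift crosses no poles of $\gamma_\pm(s)\widetilde{g}(-s)$ (controlled by the Langlands-parameter bounds on $\mu_j$ together with the super-polynomial decay of $\widetilde{g}$) and bounding the horizontal pieces via Stirling. A secondary bookkeeping challenge is the clean extraction of the Kloosterman sum $S(r\overline{a},\pm n_2; qr/n_1)$ from the combination of Dirichlet-character Gauss sums produced in Step 2: one must carefully track the interplay between the divisors $n_1\mid qr$ and the primitive/imprimitive character decomposition to match the precise modulus $qr/n_1$ and the argument $r\overline{a}$ appearing in the lemma.
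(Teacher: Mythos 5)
The paper does not actually prove Lemma \ref{voronoiGL3}: it is quoted verbatim from the literature, with the attribution ``see \cite{MS2, GL1}'' immediately preceding the statement. So there is no internal proof to compare your proposal against; what you have sketched is a high-level outline of how the result is established in the cited references, closer in spirit to the Goldfeld--Li approach (functional equations of twisted $L$-functions and Mellin inversion) than to Miller--Schmid's proof via automorphic distributions. At the level of outline, your three-step plan — Mellin inversion, a functional equation for the additively twisted Dirichlet series $D_{a,r}(s)$, and re-inversion recognizing $\Psi_g^{\pm}$ — is the right structure, and your observation that the $\gamma_\pm(s)$ kernels come from assembling even and odd gamma factors (the $\ell=0,1$ index) is correct.

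However, as a proof the proposal has a genuine gap: the two paragraphs you label ``principal obstacle'' and ``secondary bookkeeping challenge'' are precisely where the entire mathematical content of the lemma lives, and you defer rather than resolve them. Concretely, the displayed ``identity of the form'' for $D_{a,r}(s)$ \emph{is} the lemma, restated under the Mellin transform, and the only thing standing between the trivial Mellin step and the conclusion is establishing that identity. To establish it you must (i) use the $\mathrm{GL}_3$ Hecke multiplicativity $\lambda_\pi(n,r)=\sum_{d\mid(n,r)}\mu(d)\lambda_\pi(n/d,1)\lambda_\pi(1,r/d)$ to reduce to $r=1$, (ii) decompose $e(an/q)$ over Dirichlet characters of each conductor $q_1\mid q$, separating primitive from imprimitive contributions, (iii) invoke the functional equation for $L(s,\pi\otimes\chi)$ with \emph{primitive} $\chi$ (not literally Godement--Jacquet for the additive twist, which is what you want to prove), and (iv) carry out the Gauss-sum/Kloosterman-sum resummation that produces the divisor sum over $n_1\mid qr$ and the modulus $qr/n_1$ with argument $r\overline{a}$. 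None of (i)--(iv) is routine bookkeeping — (iv) in particular is where the classical Kloosterman sums actually emerge from products of Gauss sums, and getting the exact shape $S(r\overline{a},\pm n_2;qr/n_1)$ when $r>1$ and $q$ is not squarefree is the hardest part of the Goldfeld--Li argument. Similarly, the analytic-continuation and interchange-of-limits issue you flag must be addressed by shifting to a line where the dual series converges absolutely and verifying no poles are crossed; waving at ``careful contour shift'' does not establish it. In short, the roadmap is sound, but to have a proof you would need to actually execute steps (ii)--(iv) and the contour-shift justification rather than record them as obstacles.
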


For large $x$, just as in the $\mathrm{GL}_2$-Voronoi summation case, we have an asymptotic formula for $\Psi_g^{\pm}(x)$ (see \cite{Li}).

\begin{lemma}\label{voronoiGL3-holomorphic-asymptotic}
Let $g(x)\in \mathcal{C}_c^\infty(X,2X)$.
For any fixed integer $J\geq 1$ and $xX\gg 1$, we have
\bna
\Psi_g^{\pm}\left(x\right)=x\int_0^\infty
g(y)
\sum_{j=1}^{J}\frac{c_{j}e\left(3x^{1/3}y^{1/3}\right)
+d_{j}e\left(-3x^{1/3}y^{1/3}\right)}{\left(xy\right)^{j/3}}\mathrm{d}y
+O\left((xX)^{-J/3+2/3}\right),
\ena
where $c_j$ and $d_j$ are constants depending only on $\mu_i$, $i=1,2,3$, and $\pm$.
\end{lemma}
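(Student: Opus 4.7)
The strategy is to shift the contour in \eqref{intgeral transform-3} to the critical line $\Re(s) = -1/2$ and then evaluate the resulting oscillatory $\tau$-integral by stationary phase. I would begin by justifying the contour shift: since $g \in \mathcal{C}_c^{\infty}(X,2X)$, the Mellin transform $\widetilde{g}(-s)$ is entire, while the Gamma-ratios in $\gamma_{\ell}(s)$ have poles only at $s = -1-\mu_j-\ell-2k$ for $k \geq 0$, $\ell \in \{0,1\}$, $1 \leq j \leq 3$. The Kim--Sarnak bound $|\Re(\mu_j)| \leq 5/14$ places all such poles in $\Re(s) \leq -9/14$, hence strictly to the left of the target line $\Re(s) = -1/2$, so the shift picks up no residues. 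Writing $s = -1/2 + i\tau$, expressing $\widetilde{g}(1/2-i\tau) = \int_0^{\infty} g(y) y^{-1/2-i\tau} dy$, and interchanging the $\tau$- and $y$-integrals, one obtains
\begin{equation*}
\Psi_g^{\pm}(x) = \frac{x^{1/2}}{2\pi} \int_0^{\infty} g(y) y^{-1/2} \mathcal{I}_{\pm}(x,y)\, dy, \qquad \mathcal{I}_{\pm}(x,y) := \int_{-\infty}^{\infty} \gamma_{\pm}\!\bigl(-\tfrac{1}{2}+i\tau\bigr)(xy)^{-i\tau}\, d\tau.
\end{equation*}

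The next step is a stationary-phase analysis of $\mathcal{I}_{\pm}(x,y)$. By \eqref{Gamma bound-asymptotic}, for $|\tau| \gg (xX)^{\varepsilon}$ one has $\gamma_{\pm}(-1/2+i\tau) = (|\tau|/(e\pi))^{3i\tau} \Upsilon_{\pm}(\tau)$ with $\Upsilon_{\pm}$ bounded (by \eqref{A bound}) and slowly oscillating ($\Upsilon_{\pm}'(\tau) \ll 1/|\tau|$); in fact Stirling applied to the three Gamma-ratios in $\gamma_{\ell}(s)$ furnishes a full asymptotic expansion of depth $J$ whose successive terms gain a factor $1/|\tau|$ each. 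The net phase takes the form $\Phi(\tau) = 3\tau\ln|\tau| - \tau\ln(C xy)$ for a harmless constant $C$, so $\Phi'(\tau) = 3\ln|\tau| + 3 - \ln(Cxy)$ has on each half-line $\pm\tau > 0$ a unique stationary point $|\tau_0| \asymp (xy)^{1/3}$, with $\Phi(\tau_0) \asymp \mp(xy)^{1/3}$ (the two signs producing the two exponentials $e(\pm 3(xy)^{1/3})$) and $\Phi''(\tau_0) \asymp (xy)^{-1/3}$. I would then introduce a smooth partition of unity in $\tau$: the tails $|\tau| \gg (xX)^{1/3+\varepsilon}$ contribute negligibly by repeated integration by parts (since $|\Phi'| \gg 1$ there), the short piece $|\tau| \ll (xX)^{\varepsilon}$ is bounded trivially via \eqref{A bound}, and the middle range yields, via a quantitative stationary-phase expansion at depth $J$, the asymptotic
\begin{equation*}
\mathcal{I}_{\pm}(x,y) = \sum_{j=1}^{J} \bigl( c_j^{\pm} e(3(xy)^{1/3}) + d_j^{\pm} e(-3(xy)^{1/3})\bigr)(xy)^{1/6 - (j-1)/3} + O\bigl((xX)^{1/6 - J/3}\bigr).
\end{equation*}
Reinserting the prefactor and using the algebraic identity $x^{1/2} y^{-1/2}(xy)^{1/6-(j-1)/3} = x(xy)^{-j/3}$ converts the main term into the sum asserted in the lemma, while the total error is $O\bigl(x^{1/2} \cdot X^{1/2}\cdot (xX)^{1/6-J/3}\bigr) = O\bigl((xX)^{2/3-J/3}\bigr)$, matching the claim.

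The main technical obstacle will be the quantitative stationary-phase expansion at depth $J$ with a clean error of the advertised order: one must expand Stirling's formula for each of the three Gamma-ratios defining $\gamma_{\ell}(s)$ to order $J$, run the stationary-phase lemma to the same order, and then reorganise all the cross-terms into the single series in $(xy)^{-j/3}$. The slow-variation estimate $\Upsilon_{\pm}'(\tau) \ll 1/|\tau|$ is precisely what forces each successive order to pick up an extra factor $1/|\tau_0| \asymp (xy)^{-1/3}$ rather than polluting the leading behaviour, and the fact that $\Phi''$ is non-degenerate at both half-line stationary points is what produces the two independent exponentials $e(\pm 3(xy)^{1/3})$ in each order. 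Since the analogous $\mathrm{GL}_3$-Voronoi asymptotic is worked out in detail in \cite{Li}, I would follow that template for the bookkeeping, adapting it only to the compactly supported amplitude $g$ present here.
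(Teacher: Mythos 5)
Your proposal is correct and follows the same route the paper implicitly adopts: the paper does not prove this lemma but cites \cite{Li}, whose argument is exactly the contour shift to $\Re(s)=-1/2$ (justified by Kim--Sarnak placing the poles of $\gamma_{\pm}$ in $\Re(s)\leq -9/14$), followed by Stirling's expansion of $\gamma_{\pm}(-1/2+i\tau)$ and a stationary-phase analysis of the $\tau$-integral with saddle $|\tau_0|\asymp (xy)^{1/3}$. Your identification of the phase value $\asymp \mp(xy)^{1/3}$, the saddle curvature $\Phi''(\tau_0)\asymp (xy)^{-1/3}$, the per-order loss of $(xy)^{-1/3}$, and the final bookkeeping $x^{1/2}y^{-1/2}(xy)^{1/6-(j-1)/3}=x(xy)^{-j/3}$ with error $O((xX)^{2/3-J/3})$ all reproduce the cited result.
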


\begin{remark}\label{decay-of-largeX}
For $x\gg N^{\varepsilon}$, we can choose $J$ sufficiently large so that
the contribution from the $O$-terms in Lemmas \ref{voronoiGL2-holomorphic-asymptotic},
\ref{voronoiGL2-Maass-asymptotic}
and \ref{voronoiGL3-holomorphic-asymptotic}
is negligible. For the main terms
we only need to analyze the leading term $j=1$, as the analysis of the remaining
lower order terms is the same and their contribution is smaller
compared to that of the leading term.
\end{remark}

\section{Proof of the main theorem}\label{details-of-proof}

In this section, we provide the details of the proof for Theorem \ref{main-theorem}.
We write the sum in \eqref{natural-sum} as
\bna
\mathscr{S}(N)=\sum_{r\ll N^{1/2}}\mathscr{S}_r\left(\frac{N}{r^2}\right),
\ena
where
\bea\label{aim-sum}
\mathscr{S}_r(N_0):=\sum_{n=1}^{\infty}\lambda_{\pi}(n,r)
\lambda_f(n)e\left(t\varphi\left(\frac{n}{N_0}\right)\right)V\left(\frac{n}{N_0}\right).
\eea
If $r\gg t^{\gamma}$, then
\bea\label{error}
\begin{split}
\sum_{t^{\gamma}\ll r\ll N^{1/2}}\mathscr{S}_r\left(\frac{N}{r^2}\right)
\ll& \bigg(\sum_{t^{\gamma}\ll r\ll N^{1/2}}\sum_{r^2n\asymp  N}|\lambda_{\pi}(n,r)|^2\bigg)^{1/2}
\left(\sum_{t^{\gamma}\ll r\ll N^{1/2}}\sum_{n\asymp \frac{N}{r^2}}|\lambda_f(n)|^2\right)^{1/2}\\
\ll&\, t^{-\gamma/2}N^{1/2}\bigg(\sum_{t^{\gamma}\ll r\ll N^{1/2}}
\sum_{n_1|r^{\infty}}|\lambda_{\pi}(n_1,r)|^2
\sum_{n_2\asymp N/(n_1r^2)}|\lambda_{\pi}(n_2,1)|^2\bigg)^{1/2}\\
\ll&\, t^{-\gamma/2}N^{1/2}\bigg(\sum_{t^{\gamma}\ll r\ll N^{1/2}}
\sum_{n_1|r^{\infty}}(n_1r)^{5/7} \cdot \frac{N}{n_1r^2}\bigg)^{1/2}\\
\ll&\, t^{-9\gamma/14}N^{1+\varepsilon}
\end{split}
\eea
upon applying Cauchy--Schwarz, \eqref{GL3-Fourier} and the Rankin--Selberg estimates \eqref{GL2: Rankin Selberg} and \eqref{GL3-Rankin--Selberg}.

Therefore in the following we restrict the $r$-variable for the sum $\mathscr{S}(N)$ in \eqref{natural-sum} to the range
$1\leq r\leq t^{\gamma}$, at the cost of an error term $O(t^{-9\gamma/14}N^{1+\varepsilon})$.
For $1\leq r\leq t^{\gamma}$ fixed, we will consider and give
a nontrivial bound for the sum $\mathscr{S}_r(N_0)$, defined in \eqref{aim-sum}, with $N_0=Nr^{-2}$.
Without loss of generality, we further assume that the function $\varphi$ in \eqref{aim-sum} satisfies
\bea\label{first-derivative-varphi}
\varphi'(x)<0.
\eea
(The case $\varphi'(x)>0$ can be analyzed analogously.)

\subsection{Applying DFI's circle method}

Define $\delta: \mathbb{Z}\rightarrow \{0,1\}$ with
$\delta(0)=1$ and $\delta(n)=0$ for $n\neq 0$.
As in \cite{Mun6}, we will use a version of the circle method by
Duke, Friedlander and Iwaniec (see \cite[Chapter 20]{IK})
which states that for any $n\in \mathbb{Z}$ and $Q\in \mathbb{R}^+$, we have
\bea\label{DFI's}
\delta(n)=\frac{1}{Q}\sum_{1\leq q\leq Q} \;\frac{1}{q}\;
\sideset{}{^\star}\sum_{a\bmod{q}}e\left(\frac{na}{q}\right)
\int_\mathbb{R}g(q,\zeta) e\left(\frac{n\zeta}{qQ}\right)\mathrm{d}\zeta
\eea
where the $\star$ on the sum indicates
that the sum over $a$ is restricted to $(a,q)=1$.
The function $g$ has the following properties
(see (20.158) and (20.159) of \cite{IK}
\footnote{After correcting a typo in eq. (20.158) there.} and  \cite[Lemma 15]{Huang})
\bea\label{g-h}
g(q,\zeta)\ll |\zeta|^{-A}, \;\;\;\;\;\; g(q,\zeta) =1+h(q,\zeta)\;\;\text{with}\;\;h(q,\zeta)=
O\left(\frac{Q}{q}\left(\frac{q}{Q}+|\zeta|\right)^A\right)
\eea
for any $A>1$ and
\bea\label{g rapid decay}
\frac{\partial^j}{\partial \zeta^j}g(q,\zeta)\ll
|\zeta|^{-j}\min\left(|\zeta|^{-1},\frac{Q}{q}\right)\log Q, \quad j\geq 1.
\eea
In particular the first property in \eqref{g-h} implies that
the effective range of the integration in
\eqref{DFI's} is $[-N_0^\varepsilon, N_0^\varepsilon]$, and
the second property in \eqref{g-h} implies that for
$q\leq Q^{1-\varepsilon}$ and $|\zeta|\ll Q^{-\varepsilon}$,
\bna
g(q,\zeta) =1+O_A\left(Q^{-A}\right)
\ena
for any $A>1$.
 Moreover,
\eqref{g rapid decay} implies that
for $q\geq Q^{1-\varepsilon}$ or $q\leq Q^{1-\varepsilon}$ and
$Q^{-\varepsilon}\ll \zeta\ll Q^{\varepsilon}$, we have
\bna
\frac{\partial^j}{\partial \zeta^j}g(q,\zeta)\ll Q^{\varepsilon}|\zeta|^{-j}, \qquad j\geq 1.
\ena
Therefore, in all cases, we have
\bea\label{rapid decay g}
\frac{\partial^j}{\partial \zeta^j}g(q,\zeta)\ll Q^{\varepsilon}|\zeta|^{-j}, \qquad j\geq 1.
\eea

We write \eqref{aim-sum} as
\bna
\mathscr{S}_r(N_0)=\sum_{n=1}^{\infty}\lambda_{\pi}(n,r)
U\left(\frac{n}{N_0}\right)\sum_{m=1}^{\infty}
\lambda_f(m)e\left(t\varphi\left(\frac{m}{N_0}\right)
\right)V\left(\frac{m}{N_0}\right)\delta(m-n),
\ena
where $U(x)\in \mathcal{C}_c^{\infty}(1/2,5/2)$ is a smooth function satisfying $U(x)=1$
for $x\in [1,2]$ and $U^{(j)}(x)\ll_j 1$. Plugging the identity \eqref{DFI's} for
$\delta(m-n)$ in and
exchanging the order of integration and summations,
we get
\bna
\mathscr{S}_r(N_0)&=&\frac{1}{Q}
\int_{\mathbb{R}} \sum_{1\leq q\leq Q}\frac{g(q,\zeta)}{q}\;
\sideset{}{^\star}\sum_{a\bmod{q}}
\left\{\sum_{n=1}^{\infty}\lambda_{\pi}(n,r)
e\left(-\frac{na}{q}\right)U\left(\frac{n}{N_0}\right)
e\left(-\frac{n\zeta}{qQ}\right)\right\}\nonumber\\
&& \left\{\sum_{m=1}^{\infty}\lambda_{f}(m)e\left(\frac{ma}{q}\right)
V\left(\frac{m}{N_0}\right)
e\left(t\varphi\left(\frac{m}{N_0}\right)+\frac{m\zeta}{qQ}\right)\right\}\mathrm{d}\zeta.
\ena
By the first property in \eqref{g-h}, we write
\bna
\mathscr{S}_r(N_0)&=&\frac{1}{Q}
\int_{\mathbb{R}}W\left(\frac{\zeta}{N_0^{\varepsilon}}\right) \sum_{1\leq q\leq Q}\frac{g(q,\zeta)}{q}\;
\sideset{}{^\star}\sum_{a\bmod{q}}
\left\{\sum_{n=1}^{\infty}\lambda_{\pi}(n,r)
e\left(-\frac{na}{q}\right)U\left(\frac{n}{N_0}\right)
e\left(-\frac{n\zeta}{qQ}\right)\right\}\nonumber\\
&& \left\{\sum_{m=1}^{\infty}\lambda_{f}(m)e\left(\frac{ma}{q}\right)
V\left(\frac{m}{N_0}\right)
e\left(t\varphi\left(\frac{m}{N_0}\right)+\frac{m\zeta}{qQ}\right)\right\}\mathrm{d}\zeta
+O(N_0^{-A}),
\ena
where $W$ is a nonnegative smooth function supported on $[-2,2]$,
with $W(x)=1$ for $x\in [-1,1]$ and satisfying $W^{(j)}(x)\ll_j 1$.

Next we break the $q$-sum $\sum_{1\leq q\leq Q}$ into dyadic segments $q\sim C$ with $1\ll C\ll Q$ and write
\bea\label{C range}
\mathscr{S}_r(N_0)=\sum_{1\ll C\ll Q\atop \text{dyadic}}\mathscr{S}_r(N_0,C)+O(N_0^{-A}),
\eea
with
\bea\label{beforeVoronoi}
\mathscr{S}_r(N_0,C)&=&\frac{1}{Q}
\int_{\mathbb{R}}W\left(\frac{\zeta}{N_0^{\varepsilon}}\right) \sum_{q\sim C}\frac{g(q,\zeta)}{q}\;
\sideset{}{^\star}\sum_{a\bmod{q}}
\left\{\sum_{n=1}^{\infty}\lambda_{\pi}(n,r)
e\left(-\frac{na}{q}\right)U\left(\frac{n}{N_0}\right)
e\left(-\frac{n\zeta}{qQ}\right)\right\}\nonumber\\
&& \left\{\sum_{m=1}^{\infty}\lambda_{f}(m)e\left(\frac{ma}{q}\right)
V\left(\frac{m}{N_0}\right)
e\left(t\varphi\left(\frac{m}{N_0}\right)+\frac{m\zeta}{qQ}\right)\right\}\mathrm{d}\zeta.
\eea
We now proceed to estimate $\mathscr{S}_r(N_0,C)$, for $1\ll C\ll Q$.

\subsection{Dualizing the summations}
In what follows, we dualize the $n$-and $m$-sums in \eqref{beforeVoronoi} using
Voronoi summation formulas.

\subsubsection{Applying $\mathrm{GL}_2$ Voronoi summation}
Depending on whether $f$ is holomorphic or Maass, we apply
Lemma \ref{voronoiGL2-holomorphic}
or Lemma \ref{voronoiGL2-Maass} respectively with $h(y)=V(y)
e\left(t\varphi(y)+\zeta N_0y/qQ\right)$, to transform the $m$-sum in
\eqref{beforeVoronoi} into
\bea\label{after GL2 Voronoi}
\frac{N_0}{q}\sum_{\pm}\sum_{m=1}^{\infty}\lambda_{f}(m)e\left(\mp \frac{m\overline{a}}{q}\right)
\Phi_h^{\pm}\left(\frac{mN_0}{q^2}\right),
\eea
where if $f$ is holomorphic, $\Phi_h^+(x)=\Phi_h(x)$ with $\Phi_h(x)$
given by \eqref{intgeral transform-1} and $\Phi_h^-(x)=0$,
while for $f$ a Hecke--Maass cusp form, $\Phi_h^{\pm}(x)$ is given by \eqref{intgeral transform-2}.

As is typical in applying the delta symbol method, we assume that
\bea\label{assumption 1}
Q<N_0^{1/2-\varepsilon}.
\eea
Then we have $mN_0/q^2\gg N_0^{\varepsilon}$. In particular, by \eqref{The $-$ case}, the contribution from
$\Phi_h^{-}\left(mN_0/q^2\right)$ is negligible.
Next we apply asymptotic expansions for the
Bessel functions in the Hankel transform $\Phi_h^{+}\left(mN_0/q^2\right)$,
which is the content of Lemma \ref{voronoiGL2-holomorphic-asymptotic},
Lemma \ref{voronoiGL2-Maass-asymptotic}
and Remark \ref{decay-of-largeX}.
Then, up to a negligible error term, the sum \eqref{after GL2 Voronoi} is asymptotically equal to
\bna
 \frac{N_0^{3/4}}{q^{1/2}}\sum_{\pm}\sum_{m=1}^{\infty}\frac{\lambda_f(m)}{m^{1/4}}
e\left(-\frac{m\overline{a}}{q}\right)\int_0^\infty V(y)y^{-1/4}
e\left(t\varphi(y)+\frac{\zeta N_0y}{qQ}\pm \frac{2\sqrt{mN_0y}}{q}\right)\mathrm{d}y,
 \ena
and accordingly $\mathscr{S}_r(N_0,C)$ is asymptotically equal to (up to a negligible error)
 \bea\label{GL2}
&&\sum_{\pm}\frac{N_0^{3/4}}{Q}\int_{\mathbb{R}}W\left(\frac{\zeta}{N_0^{\varepsilon}}\right)
 \sum_{q\sim C}\frac{g(q,\zeta)}{q^{3/2}}\;
\sideset{}{^\star}\sum_{a\bmod{q}}
\sum_{m=1}^{\infty}\frac{\lambda_f(m)}{m^{1/4}}e\left(-\frac{m\overline{a}}{q}\right)
\Phi^{\pm}\left(m,q,\zeta\right)\nonumber\\&&\qquad\qquad\qquad\qquad
\times\sum_{n=1}^{\infty}\lambda_{\pi}(n,r)
e\left(-\frac{na}{q}\right)U\left(\frac{n}{N_0}\right)
e\left(-\frac{n\zeta}{qQ}\right)\mathrm{d}\zeta,
 \eea
where
 \bea\label{Phi definition}
\Phi^{\pm}\left(m,q,\zeta\right)=\int_0^\infty V(y)y^{-1/4}
e\left(t\varphi(y)+\frac{\zeta N_0y}{qQ}\pm \frac{2\sqrt{mN_0y}}{q}\right)\mathrm{d}y.
\eea
Notice that for $\triangle<t^{1-\varepsilon}$, defined in \eqref{derivative-of-V}, by Lemma \ref{lem: upper bound},
 the integral $\Phi^{\pm}\left(m,q,\zeta\right)$ is negligibly small if
 $\sqrt{mN_0}/q\gg N_0^{\varepsilon} \max\left\{t,N_0/qQ\right\}$.
Recall $q\sim C$. Thus we only need to consider those
``smaller" $m$'s, that is, in the range $1\leq m\ll N_0^{\varepsilon}\max\{C^2t^2/N_0,N_0/Q^2\}$.
Making a smooth partition of unity into dyadic segments to the sum over $m$,
we arrive at
\bna
\mathscr{S}_r(N_0,C)\ll \sum_{\pm}\sum_{1\ll M\ll N_0^{\varepsilon}\max\left\{C^2t^2/N_0,N_0/Q^2\right\}\atop \text{dyadic}}
|\mathscr{T}_r^{\pm}(N_0,C,M)|,
\ena
where
\bea\label{after GL2 Voronoi-2}
\mathscr{T}_r^{\pm}(N_0,C,M)&=&\frac{N_0^{3/4}}{Q}\int_{\mathbb{R}}
W\left(\frac{\zeta}{N_0^{\varepsilon}}\right)
 \sum_{q\sim C}\frac{g(q,\zeta)}{q^{3/2}}\;
\sideset{}{^\star}\sum_{a\bmod{q}}
\sum_{m=1}^{\infty}\frac{\lambda_f(m)}{m^{1/4}}e\left(-\frac{m\overline{a}}{q}\right)
w\left(\frac{m}{M}\right)
\nonumber\\&&\times\Phi^{\pm}\left(m,q,\zeta\right)
\sum_{n=1}^{\infty}\lambda_{\pi}(n,r)
e\left(-\frac{na}{q}\right)U\left(\frac{n}{N_0}\right)
e\left(-\frac{n\zeta}{qQ}\right)\mathrm{d}\zeta.
\eea
Here $w(x)\in \mathcal{C}_c^{\infty}(1,2)$ satisfies $w^{(j)}(x)\ll_j 1$ for $j\geq 0$.

Before proceeding to applying $\rm GL_3$ Voronoi formula to the sum over $n$, we give some preliminary estimates for
$\Phi^{\pm}\left(m,q,\zeta\right)$ as follows.
\begin{lemma}\label{phi:individual estimates}
 Let $q\sim Q$ and $m\sim M$. Assume $\zeta$ is such that $|\zeta|N_0/CQ\ll N_0^{-\varepsilon} \max\{t,\sqrt{N_0M}/C\}$.  Then
$ \Phi^{-}\left(m,q,\zeta\right)$ is negligibly small.

Under the further assumption
\bea\label{varphi condition}
(\varphi(y^3))''>0,
\eea
we have
\bna
\Phi^{+}\left(m,q,\zeta\right)\ll \max\{t,\sqrt{N_0M}/C\}^{-1/2}.
\ena



\end{lemma}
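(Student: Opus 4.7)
The plan is to write $\Phi^{\pm}(m,q,\zeta) = \int_0^\infty V(y) y^{-1/4} e(\psi_{\pm}(y))\,dy$ with phase
\[
\psi_{\pm}(y) = t\varphi(y) + \frac{\zeta N_0 y}{qQ} \pm \frac{2\sqrt{mN_0 y}}{q},
\]
set $T := \max\{t,\sqrt{N_0 M}/C\}$, and use the hypothesis $|\zeta| N_0/(CQ) \ll N_0^{-\varepsilon} T$ to treat the middle term in the phase derivative as a lower-order perturbation. The two cases will be handled by the first derivative (non-stationary phase) test for $\Phi^-$, and by the second derivative test — applied after a suitable change of variable — for $\Phi^+$.

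For $\Phi^-$, one computes
\[
\psi_{-}'(y) = t\varphi'(y) + \frac{\zeta N_0}{qQ} - \frac{\sqrt{mN_0}}{q\sqrt{y}}.
\]
By \eqref{first-derivative-varphi} the first summand is negative and so is the third, so they reinforce each other while the middle one is smaller by a factor of $N_0^{\varepsilon}$ from our hypothesis. Hence on the support $y\asymp 1$ one obtains $|\psi_-'(y)| \gg t|\varphi'(y)| + \sqrt{mN_0}/q \gg T$. Since the higher-order derivatives of $\psi_-$ are controlled by the derivative bounds of $\varphi$, and those of the amplitude $V(y)y^{-1/4}$ grow only like powers of $\triangle\ll t^{1-\varepsilon}$ from \eqref{derivative-of-V}, iterated integration by parts yields $\Phi^-(m,q,\zeta) \ll_A N_0^{-A}$ for every $A\geq 1$, which is the first claim.

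For $\Phi^+$, the derivative $\psi_+'(y) = t\varphi'(y) + \zeta N_0/(qQ) + \sqrt{mN_0}/(q\sqrt{y})$ contains main terms of opposite signs, which can cancel and produce a stationary point $y_0\in(1,2)$. If $t$ and $\sqrt{MN_0}/C$ differ by a factor of at least $N_0^\varepsilon$ then the larger one dominates and integration by parts again gives a bound stronger than $T^{-1/2}$. Otherwise $t\asymp \sqrt{MN_0}/C$, and we make the substitution $y = u^3$, which transforms the integral into
\[
\Phi^+(m,q,\zeta) = 3\int_0^\infty V(u^3)\, u^{5/4}\, e(\widetilde\psi_+(u))\, du,
\]
with
\[
\widetilde\psi_+(u) = t\varphi(u^3) + \frac{\zeta N_0 u^3}{qQ} + \frac{2\sqrt{mN_0}\,u^{3/2}}{q},\qquad \widetilde\psi_+''(u) = t(\varphi(u^3))'' + \frac{6\zeta N_0 u}{qQ} + \frac{3\sqrt{mN_0}}{2q\sqrt{u}}.
\]
Assumption \eqref{varphi condition} forces the first summand to be nonnegative, so it adds constructively to the positive third summand, which is of size $\sqrt{mN_0}/q \asymp T$; the middle summand is negligible by hypothesis. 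Hence $|\widetilde\psi_+''(u)| \gg T$ on the (pulled-back) support, and the second derivative test yields $\Phi^+(m,q,\zeta) \ll T^{-1/2}$, as claimed.

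The main obstacle is the verification, in the second-derivative-test step, that the new amplitude $V(u^3)\,u^{5/4}$ — together with the oscillating test function $V$ whose derivatives have size $\triangle$ — remains an admissible weight for the test, so that the oscillation of $V$ does not spoil the bound. This is precisely where the size restriction on $\triangle$ (and, later in the theorem, the stronger condition $\triangle \ll t^{1/2-\varepsilon}$) enters, and it is also the reason for formulating the hypothesis on $|\zeta|N_0/(CQ)$ in the way stated.
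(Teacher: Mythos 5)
Your proof is correct and follows essentially the same route as the paper: non\-stationary phase (integration by parts) kills $\Phi^-$ because $t\varphi'(y)$ and $-\sqrt{mN_0/y}/q$ are both negative, and the substitution $y\mapsto u^3$ followed by the second derivative test handles $\Phi^+$, the key point being that $(\varphi(u^3))''>0$ makes $t(\varphi(u^3))''$ and $\tfrac{3}{2q}\sqrt{mN_0}\,u^{-1/2}$ reinforce rather than cancel, so $\widetilde\psi_+''\gg\max\{t,\sqrt{N_0M}/C\}$. Your preliminary case split on whether $t$ and $\sqrt{N_0M}/C$ are of comparable size is harmless but unnecessary: the displayed inequality $\widetilde\psi_+''(u)\gg t+\sqrt{N_0M}/C$ already holds in all cases without it, which is how the paper proceeds. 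One remark in your closing paragraph is inaccurate: the second derivative test (Lemma~\ref{lem: 2st derivative test, dim 1}) is a total\-variation estimate, and what is used here is the standing assumption $\mathrm{Var}(V)\ll 1$, not a bound on $V^{(j)}$ in terms of $\triangle$. In particular neither $\triangle\ll t^{1-\varepsilon}$ nor the stronger $\triangle\ll t^{1/2-\varepsilon}$ is what licences this step; the latter constraint is needed later, in the stationary\-phase expansion underlying Lemma~\ref{integral:lemma}(3) (see \eqref{assumption-on-Delta}), and attributing it to this lemma misidentifies where it enters the argument.
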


\begin{proof}

Recall $\Phi^{\pm}\left(m,q,\zeta\right)$ is given by the integral in \eqref{Phi definition}. The first derivative of the phase function in the integrand is
 \bna
 \rho'_{\pm}(y)=t\varphi'(y)+\frac{\zeta N_0}{qQ}\pm\frac{1}{q}\sqrt{N_0m/y}.
 \ena
 Since we have assumed in \eqref{first-derivative-varphi} that $\varphi'(y)<0$,
 for the ``$-$" case,
 we have
 $|\rho'_{-}(y)|\gg \max\{t,\sqrt{N_0M}/C\}$ provided
 $|\zeta|N_0/CQ\ll N_0^{-\varepsilon} \max\{t,\sqrt{N_0M}/C\}$.
 Then, by applying integration by parts
 repeatedly, one gets
 \bna
 \Phi^{-}\left(m,q,\zeta\right)\ll_A N_0^{-A}
 \ena for any $A\geq 1$. Thus the first statement of (1) follows.
 For the ``$+$" case, we change variable $y\rightarrow y^3$ to get
\bna
\Phi^{+}\left(m,q,\zeta\right)=3\int_0^\infty V(y^3)y^{5/4}
e\left(t\varphi(y^3)+\frac{\zeta N_0y^3}{qQ}+ \frac{2\sqrt{N_0my^3}}{q}\right)\mathrm{d}y.
\ena
Under the additional assumption \eqref{varphi condition}, by applying the second derivative test in
 Lemma \ref{lem: 2st derivative test, dim 1}, we have
 \bna
 \Phi^{+}\left(m,q,\zeta\right)\ll \max\{t,\sqrt{N_0M}/C\}^{-1/2}.
 \ena
Here we also made use of the assumption that the test function $V$ has bounded variation $\text{Var}(V)$. This proves the second statement of the lemma.

%
\end{proof}

\subsubsection{Applying $\mathrm{GL}_3$ Voronoi summation}
Applying Lemma \ref{voronoiGL3} with
$g(y)=U\left(y/N_0\right)e\left(-\zeta y/qQ\right)$, we transform the
$n$-sum in \eqref{after GL2 Voronoi-2} into
\bea\label{$n$-sum after GL3 Voronoi}
q\sum_{\pm}\sum_{n_1|qr}\sum_{n_2=1}^{\infty}
\frac{\lambda_{\pi}\left(n_1,n_2\right)}{n_1n_2}
S\left(-r\overline{a},\pm n_2;\frac{qr}{n_1}\right)\Psi_r^{\pm}\left(n_1^2n_2,q,\zeta\right),
\eea
where by \eqref{intgeral transform-3},
\bea\label{Psi-integral}
\Psi_r^{\pm}\left(n_1^2n_2,q,\zeta\right)=
\frac{1}{2\pi i}\int_{(\sigma)}\left(\frac{N_0n_1^2n_2}{q^3r}\right)^{-s}
\gamma_{\pm}(s)U^{\dag}\left(\frac{N_0\zeta}{qQ},-s\right)\mathrm{d}s
\eea
with
\bna
U^{\dag}\left(\xi,s\right)=\int_0^\infty U(y)e\left(-\xi y\right)y^{s-1}\mathrm{d}y
\ena
and $\gamma_{\pm}(s)$ being ratio of gamma factors defined right above \eqref{A bound}.

By  \eqref{$U$ bound}, we have
\bna
\begin{split}
U^{\dag}\left(\frac{N_0\zeta}{qQ},-s\right)\ll_j&
\min \left\{ \left(\frac{1+|\mathrm{Im}(s)|}{N_0|\zeta|/qQ} \right)^j ,
\left(\frac{1+N_0|\zeta|/qQ}{|\mathrm{Im}(s)|} \right)^j \right\}\\
\ll_j& \min \left\{1, \left(\frac{N_0^{1+\varepsilon}}{qQ|\mathrm{Im}(s)|} \right)^j \right\}.
\end{split}
\ena
This together with \eqref{A bound} implies
\bna
\Psi_r^{\pm}\left(n_1^2n_2,q,\zeta\right)&\ll_j& \left(\frac{N_0n_1^2n_2}{q^3r}\right)^{-\sigma}
\int_{\mathbb{R}}
(1+|\tau|)^{3(\sigma+1/2)}\min\left\{1,\left(\frac{N_0^{1+\varepsilon}}{qQ|\tau|}\right)^j\right\}\mathrm{d}\tau\\
&\ll&\left(\frac{N_0^{1+\varepsilon}}{CQ}\right)^{5/2}\left(\frac{Q^3n_1^2n_2}{N_0^{2+\varepsilon}r}\right)^{-\sigma},
\ena
upon noting that $q\sim C$ and $|\zeta|\leq N_0^{\varepsilon}$.

By taking $j=3\sigma+5/2$ and $\sigma$ sufficiently large, one sees that
the $n_1,n_2$-sums in \eqref{$n$-sum after GL3 Voronoi} can be truncated at
$n_1^{2}n_2\leq N_0^{2+\varepsilon}r/Q^3$, at the cost of a negligible error.
Plugging \eqref{$n$-sum after GL3 Voronoi} into \eqref{after GL2 Voronoi-2},
exchanging the orders of summations and integrations, and
further breaking the $(n_1,n_2)$-sums into dyadic segments $n_1^2n_2\sim N_1$
with $1\ll N_1\ll N_0^{2+\varepsilon}r/Q^3$, we obtain
\bea\label{M-N1-range}
\mathscr{S}_r(N_0,C)\ll \sum_{\pm, \pm}\sum_{ 1\ll M\ll N_0^{\varepsilon}\max\left\{C^2t^2/N_0,N_0/Q^2\right\}\atop \text{dyadic}}\sum_{1\ll N_1\ll
N_0^{2+\varepsilon}r/Q^3\atop \text{dyadic}}
\left|\mathscr{T}_r^{\pm,\pm}(N_0,C,M,N_1)\right|,
\eea
where
\bea\label{after GL3 Voronoi}
\mathscr{T}_r^{\pm,\pm}(N_0,C,M,N_1)&=&\frac{N_0^{3/4}}{Q}\sum_{q\sim C}
\frac{1}{q^{1/2}}
\sum_{n_1^2n_2\sim N_1 \atop n_1|qr}
\frac{\lambda_{\pi}\left(n_1,n_2\right)}{n_1n_2}
\sum_{m=1}^{\infty}\frac{\lambda_f(m)}{m^{1/4}}
w\left(\frac{m}{M}\right)
\nonumber\\&&\qquad\qquad\qquad \quad \times
\mathfrak{C}_r(n_1,\pm n_2,m,q)
\mathfrak{J}_r^{\pm,\pm}(m,n_1^2n_2,q),
\eea
with
\bea\label{character sum}
\mathfrak{C}_r(n_1,n_2,m,q)
=\sideset{}{^\star}\sum_{a\bmod q}e\left(\frac{ma}{q}\right)
S\left(ra, n_2;\frac{qr}{n_1}\right)
\eea
and
\bea\label{J-middle}
\mathfrak{J}_r^{\pm,\pm}(m,n_1^2n_2,q)=
\int_{\mathbb{R}}
W\left(\frac{\zeta}{N_0^{\varepsilon}}\right)
g(q,\zeta)
\Phi^{\pm}\left(m,q,\zeta\right)
\Psi_r^{\pm}\left(n_1^2n_2,q,\zeta\right)\mathrm{d}\zeta.
\eea
To study $\mathscr{S}_r(N_0,C)$ in \eqref{beforeVoronoi}, it is therefore sufficient to treat
$\mathscr{T}_r^{\pm,\pm}(N_0,C,M,N_1)$ in \eqref{after GL3 Voronoi}.

In \eqref{J-middle} the behaviours of the integral $\Psi_r^{\pm}\left(n_1^2n_2,q,\zeta\right)$,
which is given explicitly in \eqref{Psi-integral}, depending on whether
$N_0n_1^2n_2/q^3r\ll N_0^{\varepsilon}$
or $N_0n_1^2n_2/q^3r\gg N_0^{\varepsilon}$, that is, whether $N_1\ll C^3r/N_0^{1-\varepsilon}$ or
$N_1\gg C^3r/N_0^{1-\varepsilon}$ (since $n_1^2n_2\sim N_1$ and $q\sim C$), are slightly different.
In what follows, we treat these two cases separately.


\subsubsection{The contribution from smaller $N_1$}\label{contribution-smaller}

In this subsection, we consider the contribution from $1\ll N_1\ll C^3r/N_0^{1-\varepsilon}$
to $\mathscr{T}_r^{\pm}(N_0,C,M)$ in \eqref{M-N1-range}, and in Section \ref{contribution-generic} we will consider the case where $N_1\gg C^3r/N_0^{1-\varepsilon}$. We remind the reader that the $\zeta$ in \eqref{J-middle} satisfies $|\zeta|<N_0^{\varepsilon}$.

 \textit{Case 1}. For $\zeta$ with $|\zeta| N_0 /CQ\geq N_0^{\varepsilon}$, we analyze the integral $\Psi_r^{\pm}$
in \eqref{Psi-integral} in some detail. If $|\mathrm{Im}(s)|\ll N_0^{\varepsilon}$ or
$|\mathrm{Im}(s)|\gg N_0^{1+\varepsilon}/CQ$, then by \eqref{$U$ bound},
\bna
\begin{split}
U^{\dag}\left(\frac{\zeta N_0}{qQ},-s\right)\ll_j&
\min \left\{ \left(\frac{1+|\mathrm{Im}(s)|}{|\zeta| N_0/CQ} \right)^j ,
\left(\frac{1+|\zeta| N_0/CQ}{|\mathrm{Im}(s)|} \right)^j \right\}\\
\ll_A&\, N_0^{-A}
\end{split}
\ena
upon recalling $|\zeta|<N_0^{\varepsilon}$ and taking $j$ sufficiently large.
This implies the contribution from $|\mathrm{Im}(s)|\ll N_0^{\varepsilon}$ and
$|\mathrm{Im}(s)|\gg N_0^{1+\varepsilon}/CQ$ to $\Psi_r^{\pm}$ is negligible. Shifting the contour of integration to $\sigma=-1/2$ and using \eqref{Gamma bound-asymptotic},
we rewrite the integral $\Psi_r^{\pm}$ in \eqref{Psi-integral} as
\bna
\Psi_r^{\pm}\left(n_1^2n_2,q,\zeta\right)=\sum_{J\in \mathscr{J}}
\int_0^{\infty}U(y)K_{J}^{\pm}(y)
\left(\frac{N_0n_1^2n_2}{q^3ry}\right)^{1/2}
e\left(-\frac{\zeta N_0y}{qQ}\right)\mathrm{d}y+O_A(N_0^{-A}),
\ena
where
\bna
K_{J}^{\pm}(y)=\frac{1}{2\pi}\int_{\mathbb{R}}
\omega_J\left(|\tau|\right)\Upsilon_{\pm}(\tau)
e\left(-\frac{1}{2\pi}\tau\log\frac{N_0n_1^2n_2y}{q^3r}+\frac{3}{2\pi}\tau\log\frac{|\tau|}{e\pi}\right)
\mathrm{d}\tau.
\ena
Here $\mathscr{J}$ is a collection of $O(\log N_0)$ many real numbers in
the interval $[N_0^{\varepsilon}, N_0^{1+\varepsilon}/CQ]$. For each $J\in \mathscr{J}$,
$\omega_J(x)\in \mathcal{C}_c^{\infty}(J,2J)$ satisfies $\omega_J^{(\ell)}(x)\ll_{\ell} 1$
for $\ell\geq 0$ and $\sum_{J\in \mathscr{J}}\omega_J(x)=1$ for
$x\in [N_0^{\varepsilon}, N_0^{1+\varepsilon}/CQ]$.
Recall that $\Upsilon'_{\pm}(\tau)\ll |\tau|^{-1}$ by \eqref{Gamma bound-asymptotic}. Applying Lemma \ref{lemma:exponentialintegral} (1)
with $X=1,Y=J\log J, Z=J$ and $R=J\log J$, we have
$K_{J}^{\pm}(y)\ll_A N_0^{-A}$.  It follows that
\bna
\Psi_r^{\pm}\left(n_1^2n_2,q,\zeta\right)\ll_A N_0^{-A}.
\ena
Thus the contribution from those $\zeta$'s such that $\zeta N_0 /CQ\gg N_0^{\varepsilon}$ to \eqref{M-N1-range} is negligible.

\textit{Case 2}.  For $\zeta$ with $|\zeta| N_0 /CQ< N_0^{\varepsilon}$,
by Lemma \ref{phi:individual estimates} $\Phi^{-}\left(m,q,\zeta\right)$ is negligibly small, and
we only need to consider the sum \eqref{after GL3 Voronoi} involving
$\Phi^{+}\left(m,q,\zeta\right)$.
Denote temporarily
\bea\label{T}
\mathbb{T}&:=&\frac{N_0^{3/4}}{Q}\sum_{q\sim C}
\frac{1}{q^{1/2}}
\sum_{n_1^2n_2\sim N_1 \atop n_1|qr}
\frac{\lambda_{\pi}\left(n_1,n_2\right)}{n_1n_2}
\sum_{m=1}^{\infty}\frac{\lambda_f(m)}{m^{1/4}}
\mathfrak{C}_r(n_1,\pm n_2,m,q)
\nonumber\\&&\qquad\qquad\qquad \quad \times
w\left(\frac{m}{M}\right)
\mathfrak{J}_r^{+,\pm,\flat}(m,n_1^2n_2,q),
\eea
where
\bea\label{small zeta}
\mathfrak{J}_r^{+,\pm,\flat}(m,n_1^2n_2,q)=
\int_{-CQ/N_0^{1-\varepsilon}}^{CQ/N_0^{1-\varepsilon}}
W\left(\frac{\zeta}{N_0^{\varepsilon}}\right)
g(q,\zeta)
\Phi^{+}\left(m,q,\zeta\right)
\Psi_r^{\pm}\left(n_1^2n_2,q,\zeta\right)\mathrm{d}\zeta.
\eea
Here we recall $\Phi^{+}\left(m,q,\zeta\right)$  and
$\Psi_r^{\pm}\left(n_1^2n_2,q,\zeta\right)$ are defined in \eqref{Phi definition}
and \eqref{Psi-integral}, respectively.

For $\Psi_r^{\pm}$ in \eqref{Psi-integral},
again by \eqref{$U$ bound}, if $\tau=\mathrm{Im}(s)\gg N_0^{\varepsilon}$,
then $U^{\dag}\left(N_0\zeta/qQ,-s\right)\ll N_0^{-A}$.
Hence the contribution from such $\tau$ to $\Psi_r^{\pm}$ is negligible. Next
by shifting the contour of integration to $\sigma=-1/2$ and recalling that $\gamma_{\pm}(-1/2+i\tau)\ll 1$ from \eqref{A bound}, we have
\bea\label{for small n1n2}
\Psi_r^{\pm}\left(n_1^2n_2,q,\zeta\right)&=&\frac{1}{2\pi }\int_{|\tau|\ll N_0^{\varepsilon}}
\left(\frac{N_0n_1^2n_2}{q^3r}\right)^{1/2-i\tau}
\gamma_{\pm}\left(-\frac{1}{2}+i\tau\right)U^{\dag}\left(\frac{N_0\zeta}{qQ},\frac{1}{2}-i\tau\right)
\mathrm{d}\tau+O(N_0^{-A})\nonumber\\
&\ll&\int_{|\tau|\ll N_0^{\varepsilon}}\left(\frac{N_0N_1}{C^3r}\right)^{1/2}\cdot 1\cdot 1\,
\mathrm{d}\tau\nonumber\\
&\ll& N_0^{\varepsilon}
\eea
upon recalling $n_1^2n_2\sim N_1\ll C^3r/N_0^{1-\varepsilon}$, which is our assumption in this subsection.

By Fourier inversion, we can write
\bea\label{expression}
w\left(\frac{m}{M}\right)\mathfrak{J}_r^{+,\pm,\flat}(m,n_1^2n_2,q)
=\int_{\mathbb{R}}\widehat{\mathfrak{J}}(x,n_1^2n_2,q)e(mx)\mathrm{d}x,
\eea
where
\bna
\widehat{\mathfrak{J}}(x,n_1^2n_2,q):=\int_{\mathbb{R}}
w\left(\frac{u}{M}\right)\mathfrak{J}_r^{+,\pm,\flat}(u,n_1^2n_2,q)
e(-xu)\mathrm{d}u.
\ena

We claim that the integral $\widehat{\mathfrak{J}}(x,n_1^2n_2,q)$ is negligibly small (i.e., $\ll N_0^{-A}$) if $|x|> N_0^{1/2+\varepsilon}C^{-1}M^{-1/2}$. To see this, plugging \eqref{small zeta} in, we first write
\bna
\widehat{\mathfrak{J}}(x,n_1^2n_2,q)=M\int_{-CQ/N_0^{1-\varepsilon}}^{CQ/N_0^{1-\varepsilon}}
W\left(\frac{\zeta}{N_0^{\varepsilon}}\right)
g(q,\zeta)\left(\int_{\mathbb{R}}
w(u)
\Phi^{+}\left(uM,q,\zeta\right)
e(-xMu)\mathrm{d}u\right)\Psi_r^{\pm}\left(n_1^2n_2,q,\zeta\right)\mathrm{d}\zeta.
\ena
By inserting the definition \eqref{Phi definition} for $\Phi^{+}\left(uM,q,\zeta\right)$ into the inner integral,
\bea\label{intermediate-step}
\begin{split}
\widehat{\mathfrak{J}}(x,n_1^2n_2,q)=&M\int_{-CQ/N_0^{1-\varepsilon}}^{CQ/N_0^{1-\varepsilon}}
W\left(\frac{\zeta}{N_0^{\varepsilon}}\right)
g(q,\zeta)
\int_0^\infty V(y)y^{-1/4}
e\left(t\varphi(y)+\frac{\zeta N_0y}{qQ}\right)\\
&\quad\quad \times\bigg(\int_{\mathbb{R}}
w(u)e\big(2\sqrt{uMN_0y}/q-xMu\big)\mathrm{d}u\bigg)\mathrm{d}y\, \Psi_r^{\pm}\left(n_1^2n_2,q,\zeta\right)\mathrm{d}\zeta.
\end{split}
\eea
By applying integration by parts repeatedly, one sees that the inner most integral satisfies
\bna
\begin{split}
\int_{\mathbb{R}}
w(u)e\big(2\sqrt{uMN_0y}/q-xMu\big)\mathrm{d}u\ll_j& \left(\frac{1}{|x|M}\big(1+\sqrt{MN_0y}/q\big)\right)^j\\
\ll_j& \left(\frac{1}{|x|C}\sqrt{\frac{N_0}{M}}\right)^j
\end{split}
\ena
for any $j\geq 0$, upon noting that $w^{(j)}(u)\ll_j 1$, $q\sim C$, and $y\sim 1$. Finally by bounding the outer integrals over $y$ and $\xi$ trivially, we obtain
\bna
\widehat{\mathfrak{J}}(x,n_1^2n_2,q)\ll M\frac{CQ}{N_0^{1-\varepsilon}}
\left(\frac{1}{|x|C}\sqrt{\frac{N_0}{M}}\right)^j.
\ena
In particular, if $|x|> N_0^{1/2+\varepsilon}C^{-1}M^{-1/2}$, by taking $j$ large enough,
we have $\widehat{\mathfrak{J}}(x,n_1^2n_2,q)\ll_A N_0^{-A}$, justifying the claim.

Therefore we can impose an additional restriction $|x|\leq N_0^{1/2+\varepsilon}C^{-1}M^{-1/2}$
to the integral over $x$ in \eqref{expression}, that is, we can write \eqref{expression} as
\bea\label{expression2}
w\left(\frac{m}{M}\right)\mathfrak{J}_r^{+,\pm,\flat}(m,n_1^2n_2,q)
=\int_{|x|\leq N_0^{1/2+\varepsilon}C^{-1}M^{-1/2}}\widehat{\mathfrak{J}}(x,n_1^2n_2,q)e(mx)\mathrm{d}x+O_A\left(N_0^{-A}\right).
\eea

We now make a change of variable $y\rightarrow y^3$ in \eqref{intermediate-step} to rewrite
\bea\label{another expression}
\widehat{\mathfrak{J}}(x,n_1^2n_2,q)=M\int_{-CQ/N_0^{1-\varepsilon}}^{CQ/N_0^{1-\varepsilon}}
W\left(\frac{\zeta}{N_0^{\varepsilon}}\right)
g(q,\zeta)
\Psi_r^{\pm}\left(n_1^2n_2,q,\zeta\right)R\left(x,q,\zeta\right)\mathrm{d}\zeta,
\eea
where
\bna
R\left(x,q,\zeta\right)=3\int_0^{\infty}\int_0^{\infty}y^{5/4}V(y^3)w(u)
e(G(y,u))\mathrm{d}u\mathrm{d}y
\ena
with
$$
G(y,u)=t\varphi(y^3)+\frac{\zeta N_0y^3}{qQ}+ \frac{2\sqrt{MN_0u}}{q}y^{3/2}-xMu.
$$
Recall that we have assumed in \eqref{varphi condition} that
$(\varphi(y^3))''>0$.
Then calculating partial derivatives, one finds that
\bna
\begin{split}
\frac{\partial^2 G(y,u)}{\partial y^2} =& \,t(\varphi(y^3))''+\frac{6\zeta N_0y}{qQ}+
\frac{3\sqrt{MN_0u}}{2q}y^{-1/2}\gg t,\\
\frac{\partial^2 G(y,u)}{\partial u^2} =&-\frac{\sqrt{MN_0}}{2q}\left(\frac{y}{u}\right)^{3/2}\asymp \sqrt{MN_0}/C,
\end{split}\ena
and
$$\frac{\partial^2 G(y,u)}{\partial y^2}\cdot \frac{\partial^2 G(y,u)}{\partial u^2}
-\left(\frac{\partial^2 G(y,u) }{\partial y\partial u}\right)^2\gg t \sqrt{MN_0}/C.$$
This last inequality holds because
$\frac{\partial^2 G(y,u)}{\partial y^2}\cdot \frac{\partial^2 G(y,u)}{\partial u^2}<0$, due to our additional assumption $(\varphi(y^3))''>0$.
Hence by applying the second derivative test in Lemma \ref{lem: 2nd derivative test, dim 2}
with $\rho_1=t$, $\rho_2=\sqrt{MN_0}/C$ and $\text{Var}=1$, we obtain
\bea\label{two dim}
R\left(x,q,\zeta\right)\ll t^{-1/2}C^{1/2}(MN_0)^{-1/4}.
\eea
Plugging the bounds \eqref{g-h}, \eqref{for small n1n2} and \eqref{two dim} into \eqref{another expression}, we obtain
\bea\label{estimate save}
\widehat{\mathfrak{J}}(x,n_1^2n_2,q)\ll M \frac{C Q}{N_0^{1-\varepsilon}}t^{-1/2}C^{1/2}(MN_0)^{-1/4}.
\eea

In view of \eqref{character sum} and \eqref{expression2}, the sum in \eqref{T} involving the $m$-variable takes the form
\bna
&&\sum_{m\sim M}\frac{\lambda_f(m)}{m^{1/4}}e\left(\frac{ma}{q}\right)
w\left(\frac{m}{M}\right)
\mathfrak{J}_r^{+,\pm,\flat}(m,n_1^2n_2,q)\\
&=&\int_{|x|\leq N_0^{1/2+\varepsilon}C^{-1}M^{-1/2}}\widehat{\mathfrak{J}}(x,n_1^2n_2,q)
\sum_{m\sim M}\frac{\lambda_f(m)}{m^{1/4}}e\left(\left(x+\frac{a}{q}\right)m\right)
\mathrm{d}x+O_A(N_0^{-A}).
\ena
From the Wilton bound \eqref{additive-holo} and \eqref{additive-Maass}, the inner $m$-sum is bounded above by $O(M^{1/4+\varepsilon})$. This together with the upper bound  \eqref{estimate save} infers the following estimate for the above expression
\bna
\ll \frac{N_0^{1/2+\varepsilon}}{CM^{1/2}}\cdot \frac{M C Q}{N_0^{1-\varepsilon}t^{1/2}}\left(\frac{C^2}{MN_0}\right)^{1/4}\cdot M^{1/4}
\ll N_0^{-3/4+\varepsilon}t^{-1/2}M^{1/2}C^{1/2}Q.
\ena
Now we are ready to give our final estimate for $\mathbb{T}$, the contribution from the case where $\zeta$ satisfies $\zeta N_0/CQ\ll N_0^{\varepsilon}$ to $\mathscr{T}_r^{\pm}(N_0,C,M)$ in \eqref{M-N1-range}, as follows. First we recall from \eqref{T}, we have
\bna
\mathbb{T}&\ll&\frac{N_0^{3/4}}{Q}\bigg|\sum_{q\sim C}
\frac{1}{q^{1/2}}
\sum_{n_1^2n_2\sim N_1 \atop n_1|qr}
\frac{\lambda_{\pi}\left(n_1,n_2\right)}{n_1n_2}
\sideset{}{^\star}\sum_{a\bmod q}
S\left(ra, n_2;\frac{qr}{n_1}\right)
\nonumber\\&&\qquad\qquad\qquad \quad \times
\sum_{m\sim M}\frac{\lambda_f(m)}{m^{1/4}}e\left(\frac{ma}{q}\right)
w\left(\frac{m}{M}\right)
\mathfrak{J}_r^{+,\pm,\flat}(m,n_1^2n_2,q)\bigg|.
\ena
By the Weil's bound for Kloosterman sums and the Rankin--Selberg
estimate \eqref{GL3-Rankin--Selberg}, we derive
\bna
\mathbb{T}&\ll&N_0^{\varepsilon}\frac{M^{1/2}C^{1/2}}{t^{1/2}}\sum_{q\sim C}
\frac{1}{q^{1/2}}
\sum_{n_1^2n_2\sim N_1 \atop n_1|qr}
\frac{|\lambda_{\pi}\left(n_1,n_2\right)|}{n_1n_2}\sideset{}{^\star}\sum_{a\bmod q}
\left(ra, n_2,\frac{qr}{n_1}\right)^{1/2}\left(\frac{qr}{n_1}\right)^{1/2}\nonumber\\
&\ll&N_0^{\varepsilon}\frac{r^{1/2}M^{1/2}C^{1/2}Q}{t^{1/2}}
\sum_{q\sim C}\sum_{n_1|qr}n_1^{-3/2}
\sum_{n_1^2n_2\sim N_1}
\frac{|\lambda_{\pi}\left(n_1,n_2\right)|}{n_2}
\left(n_2,\frac{qr}{n_1}\right)^{1/2}\nonumber\\
&\ll&N_0^{\varepsilon}\frac{r^{1/2}M^{1/2}C^{1/2}Q}{t^{1/2}}
\sum_{q\sim C}\sum_{n_1|qr}n_1^{-1/2}
\left(\sum_{n_1^2n_2\sim N_1}
\frac{|\lambda_{\pi}\left(n_1,n_2\right)|^2}{n_1^2n_2}\right)^{1/2}
\left(\sum_{n_1^2n_2\sim N_1}
\frac{\left(n_2,\frac{qr}{n_1}\right)}{n_2}
\right)^{1/2}\nonumber\\
&\ll&N_0^{\varepsilon}\frac{r^{1/2}M^{1/2}C^{3/2}Q}{t^{1/2}}.
\ena
We further assume
\bea\label{assumption: range 0}
Q>(N_0/t)^{1/2}.
\eea
Then for $C\ll Q$, we have $C^2t^2/N_0\ll N_0/Q^2$. Hence for $M$ in \eqref{M-N1-range}, the above expression is
bounded by
\bea\label{small part-1}
N_0^{\varepsilon}r^{1/2}t^{-1/2}Q^{5/2}\left(Q^2t^2/N_0\right)^{1/2}\ll
r^{1/2}N_0^{-1/2+\varepsilon}Q^{7/2}t^{1/2}.
\eea

\begin{remark}\label{second-derivative-0}
If $(\varphi(x^3))''=0$, we may apply the second derivative test in
Lemma \ref{lem: 2nd derivative test, dim 2}
with $\rho_1=\rho_2=\sqrt{MN_0}/C$ to obtain
\bna
R\left(x,q,\zeta\right)\ll C(MN_0)^{-1/2}.
\ena
Then following the same line of proof as above, we would obtain the same estimate
as in \eqref{small part-1}.

\end{remark}

\subsubsection{The generic case: contribution from larger $N_1$}\label{contribution-generic}
Recall in Section \ref{contribution-smaller} we have analyzed the case where $N_1\ll C^3r/N_0^{1-\varepsilon}$ in \eqref{M-N1-range}.
Now we consider the contribution from the complementary range
\bea\label{generic-N1}
N_1\gg C^3r/N_0^{1-\varepsilon}
\eea
to \eqref{M-N1-range}.

To deal with \eqref{after GL3 Voronoi}, we first analyze the integral $\Psi_r^{\pm}\left(n_1^2n_2,q,\zeta\right)$ inside \eqref{J-middle}, which we recall is given by \eqref{Psi-integral}.
Applying Lemma \ref{voronoiGL3-holomorphic-asymptotic} for $\Psi_r^{\pm}$
and taking the $J$ there to be sufficiently large, one sees that the
analysis of $\Psi_r^{\pm}$ is reduced to treating the integral
\bna
\begin{split}
\Psi_{r,\zeta}^{\pm,0}:=&\left(\frac{n_1^2n_2}{q^3r}\right)^{2/3}\int_0^{\infty}y^{-1/3}
U\left(\frac{y}{N_0}\right)
e\left(-\frac{\zeta y}{qQ}\pm 3\left(\frac{n_1^2n_2 y}{q^3r}\right)^{1/3}\right)\mathrm{d}y\\
=&\left(\frac{n_1^2n_2N_0}{q^3r}\right)^{2/3}\int_0^{\infty}y^{-1/3}U\left(y\right)
e\left(-\frac{\zeta N_0 y}{qQ}\pm 3\left(\frac{N_0 n_1^2n_2}{q^3r}\right)^{1/3}y^{1/3}\right)\mathrm{d}y.
\end{split}\ena
Here $|\zeta|<N_0^{\varepsilon}$.
Without loss of generality, we may assume $\zeta\geq 0$.
Denote
\bna
\varrho_\pm(y)=-\frac{2\pi\zeta N_0 y}{qQ}\pm6\pi\left(\frac{N_0 n_1^2n_2}{q^3r}\right)^{1/3}y^{1/3}.
\ena
Then
\bna
\varrho'_\pm(y)&=&-\frac{2\pi\zeta N_0}{qQ}\pm
2\pi\left(\frac{N_0 n_1^2n_2}{q^3r}\right)^{1/3}y^{-2/3}.\\
\ena
Since $n_1^2n_2\sim N_1\gg C^3r/N_0^{1-\varepsilon}$ and we assumed $\zeta\geq 0$, for the ``$-$" case, $|\varrho'_{-}(y)|\gg \left(N_0 n_1^2n_2/q^3r\right)^{1/3}
\gg N_0^{\varepsilon}$. Then by using Lemma \ref{lemma:exponentialintegral} (1), we obtain $\Psi_{r,\zeta}^{-,0}\ll N_0^{-A}$, whose contribution to \eqref{M-N1-range} is therefore negligible. Hence we only need to concentrate on $\Psi_{r,\zeta}^{+,0}$ (correspondingly, for
the character sum $\mathfrak{C}_r(n_1,\pm n_2,m,q)$ in \eqref{after GL3 Voronoi}, we only need to consider
the ``$+$" case).

For $\Psi_{r,\zeta}^{+,0}$, if $\zeta$ satisfies $|\zeta| N_0/CQ\leq 1$, then similarly we have $\varrho'_{+}(y)\gg \left(N_0 n_1^2n_2/q^3r\right)^{1/3}
\gg N_0^{\varepsilon}$. By using integration by parts repeatedly we again have $\Psi_{r,\zeta}^{+,0}\ll N_0^{-A}$. Hence the contribution from such $\zeta$'s to \eqref{J-middle} is negligible. Consequently, we can further assume the $\zeta$ in \eqref{J-middle} satisfies the condition $CQ/N_0\ll \zeta\ll N_0^{\varepsilon}$. In particular, we can insert a smooth partition of unity for the $\zeta$-integral, and write
\bea\label{J-middle-2}
\mathfrak{J}_r^{\pm,+}(m,n_1^2n_2,q)&=&\sum_{CQ/N_0\ll \Xi\ll N_0^{\varepsilon}\atop \text{dyadic}}
\int_{\mathbb{R}}
g(q,\zeta)W\left(\frac{\zeta}{N_0^{\varepsilon}}\right)\widetilde{W}\left(\frac{\zeta}{\Xi}\right)\nonumber\\
&&\qquad\times \Phi^{\pm}\left(m,q,\zeta\right)
\Psi_{r,\zeta}^{+,0} \,\mathrm{d}\zeta+O(N_0^{-A}).
\eea
Here $\widetilde{W}(x)\in \mathcal{C}_c^{\infty}(1,2)$,
 satisfying $\widetilde{W}^{(j)}(x)\ll_j 1$ for $j\geq 0$.

Now we can proceed to apply the stationary phase method to the integral $\Psi_{r,\zeta}^{+,0}$. Calculating $\varrho'_{+}(y_0)=0$, the stationary point $y_0$ is given by
$y_0=\left(n_1^2n_2Q^3/\zeta^3N_0^2r\right)^{1/2}$.
Applying Lemma \ref{lemma:exponentialintegral} (2) with $X=Z=1$ and
$Y=R=\left(N_0 n_1^2n_2/q^3r\right)^{1/3}\gg N_0^{\varepsilon}$, we obtain
\bea\label{phi-0-middle}
\begin{split}
\Psi_{r,\zeta}^{+,0}=&\left(\frac{n_1^2n_2N_0}{q^3r}\right)^{2/3}
\frac{e^{i\varrho_{+}(y_0)}}{\sqrt{\varrho''_{+}(y_0)}}
F_{\natural}(y_0)+O_A\left(N_0^{-A}\right)\\
= &\left(\frac{-4\pi n_1^2n_2N_0}{3q^3r}\right)^{1/2}
e\left(2\left(\frac{Qn_1^2n_2}{q^2r\zeta}\right)^{1/2}\right)
\widetilde{F}_{\natural}\left(\frac{n_1n_2^{1/2}Q^{3/2}}{N_0r^{1/2}\zeta^{3/2}}\right)+O_A\left(N_0^{-A}\right),
\end{split}\eea
where $F_{\natural}$ is an $1$-inert function (depending on $A$)
supported on $y_0 \asymp 1$, and $\widetilde{F}_{\natural}(y):=y^{5/6}F_{\natural}(y)$.

Plugging \eqref{phi-0-middle} into \eqref{J-middle-2}, for
$ N_1\gg C^3r/N_0^{1-\varepsilon}$, up to a negligible error,
the evaluation of $\mathscr{T}_r^{\pm,\pm}(N_0,C,M,N_1)$
in \eqref{after GL3 Voronoi} is therefore reduced to estimating
\bea\label{Xi:range}
\sum_{\pm}\sum_{CQ/N_0\ll \Xi\ll N_0^{\varepsilon}\atop \text{dyadic}}
\hbar^{\pm}(\Xi)
\mathbf{M}_r^{\pm}(\Xi)
\eea
where $\mathbf{M}_r^{\pm}(\Xi)=\mathbf{M}_r^{\pm}(N_0,C,M,N_1,\Xi)$ is defined by
\bea\label{after GL3 Voronoi:2}
\mathbf{M}_r^{\pm}(\Xi)&=&\frac{N_0^{5/4}}{Qr^{1/2}}\sum_{q\sim C}
\frac{1}{q^2}
\sum_{n_1^2n_2\sim N_1 \atop n_1|qr}
\frac{\lambda_{\pi}\left(n_1,n_2\right)}{\sqrt{n_2}}
\sum_{m\sim M}\frac{\lambda_f(m)}{m^{1/4}}
w\left(\frac{m}{M}\right)
\nonumber\\&&\qquad\qquad\qquad \quad \times
\mathfrak{C}_r(n_1,n_2,m,q)
\mathcal{H}_r^{\pm}(n_1^2n_2,m,q,\Xi)
\eea
with
\bea\label{H-middle-0}
\mathcal{H}_r^{\pm}(n_1^2n_2,m,q,\Xi)&=&
\int_{\mathbb{R}}g(q,\zeta)
W\left(\frac{\zeta}{N_0^{\varepsilon}}\right)\widetilde{W}\left(\frac{\zeta}{\Xi}\right)
\Phi^{\pm}\left(m,q,\zeta\right)\nonumber\\
&&\qquad\qquad\times
\widetilde{F}_{\natural}\left(\frac{n_1n_2^{1/2}Q^{3/2}}{N_0r^{1/2}\zeta^{3/2}}\right)
e\left(2\left(\frac{Qn_1^2n_2}{q^2r\zeta}\right)^{1/2}\right)\mathrm{d}\zeta.
\eea
Here we have inserted an indicator function $\hbar^{\pm}(\Xi)$, which depending on the sign of $\Phi^{\pm}\left(m,q,\zeta\right)$, is defined as
 $\hbar^{+}(\Xi)=1$ and
 $\hbar^{-}(\Xi)=\mathbf{1}_{\Xi N_0/CQ\gg N_0^{-\varepsilon} \max\{t,\sqrt{N_0M}/C\}}$, to take into account the facts from Lemma \ref{phi:individual estimates}.

We first study the integral in \eqref{H-middle-0}. By plugging \eqref{Phi definition} for $\Phi^{\pm}\left(m,q,\zeta\right)$ in and switching the order of integrations,
\bea\label{H-middle}
\mathcal{H}_r^{\pm}(n_1^2n_2,m,q,\Xi)=
\int_0^\infty \mathcal{K}_r(y;n_1^2n_2,q,\Xi)V(y)y^{-1/4}
e\left(t\varphi(y)\pm\frac{2\sqrt{mN_0y}}{q}\right)
\mathrm{d}y
\eea
with
\bna
\mathcal{K}_r(y;n_1^2n_2,q,\Xi)=
\int_{\mathbb{R}}g(q,\zeta)
W\left(\frac{\zeta}{N_0^{\varepsilon}}\right)\widetilde{W}\left(\frac{\zeta}{\Xi}\right)
\widetilde{F}_{\natural}\left(\frac{n_1n_2^{1/2}Q^{3/2}}{N_0r^{1/2}\zeta^{3/2}}\right)
e\left(\frac{\zeta N_0y}{qQ}+2\left(\frac{Qn_1^2n_2}{q^2r\zeta}\right)^{1/2}\right)
\mathrm{d}\zeta.
\ena
We can give an asymptotic expansion for $\mathcal{K}_r(y;n_1^2n_2,q,\Xi)$.
By making a change of variable
$n_1n_2^{1/2}Q^{3/2}/N_0r^{1/2}\zeta^{3/2}\rightarrow \zeta$, we get
\bna
\mathcal{K}_r(y;n_1^2n_2,q,\Xi)
=\frac{2}{3}Q\left(\frac{n_1^2n_{2}}{N_0^{2}r}\right)^{1/3}
\int_{\mathbb{R}}\phi(\zeta)
\exp\left(i\varpi(\zeta)\right)\mathrm{d}\zeta,
\ena
where
\bna
\phi(\zeta):=-\zeta^{-5/3}\widetilde{F}_{\natural}(\zeta)
g\left(q,\frac{n_1^{2/3}n_2^{1/3}Q}{\zeta^{2/3}r^{1/3}N_0^{2/3}}\right)
W\left(\frac{n_1^{2/3}n_2^{1/3}Q}{\zeta^{2/3}r^{1/3}N_0^{2/3+\varepsilon}}\right)
\widetilde{W}\left(\frac{n_1^{2/3}n_2^{1/3}Q}{\zeta^{2/3}r^{1/3}N_0^{2/3}\Xi}\right)
\ena
and the phase function $\varpi(\zeta)$ is given by
\bna
\varpi(\zeta)=2\pi\left(\frac{N_0 n_1^2n_2}{q^3r}\right)^{1/3}
\left(y\zeta^{-2/3}+2\zeta^{1/3}\right).
\ena
Note that
\bea\label{1st deri}
\varpi'(\zeta)=
\frac{4\pi}{3}\left(\frac{N_0n_1^2n_2}{q^3r}\right)^{1/3}\left(-y\zeta^{-5/3}+\zeta^{-2/3}\right),
\eea
and for $j\geq 2$,
\bea\label{jth deri}
\varpi^{(j)}(\zeta)
=\frac{4\pi}{3}\left(-\frac{5}{3}\right)\cdot\cdot\cdot\left(\frac{1}{3}-j\right)
\left(\frac{N_0n_1^2n_2}{q^3r}\right)^{1/3}\left(
-y\zeta^{-2/3-j}+\frac{2}{3j-1}\zeta^{1/3-j}\right).
\eea
From \eqref{1st deri},
there is a stationary point $\zeta_0=y$ and from \eqref{jth deri}
$\varpi^{(j)}(\zeta)\ll_j \left(N_0n_1^2n_2/q^3r\right)^{1/3}$ for $j\geq 2$.
By \eqref{rapid decay g}, we have $\phi^{(j)}(\zeta)\ll_j Q^{\varepsilon}$.
Applying Lemma \ref{lemma:exponentialintegral} (2) with $X=Z=1$ and
$Y=R=\left(N_0 n_1^2n_2/q^3r\right)^{1/3}\gg N_0^{\varepsilon}$, we obtain
\bea\label{K-integral}
\begin{split}
\mathcal{K}_r(y;n_1^2n_2,q,\Xi)
=&Q\left(\frac{n_1^2n_2}{N_0^2r}\right)^{1/3}
\frac{e^{i\varpi(\zeta_0)}}{\sqrt{\varpi''(\zeta_0)}}
G_{\natural}(\zeta_0)+O_A\left(N_0^{-A}\right)\\
=&Q\left(\frac{q^3n_1^2n_2}{N_0^5r}\right)^{1/6}\left(\frac{3}{4\pi}\right)^{1/2}y^{5/6}e\left(\frac{3}{q}\left(\frac{N_0n_1^2n_2y}{r}\right)^{1/3}\right)G_{\natural}(y)+O_A\left(N_0^{-A}\right),
\end{split}
\eea
where $G_{\natural}$ is an inert function (depending on $A$ and $\Xi$)
supported on $\zeta_0 \asymp 1$.
Finally, substituting \eqref{K-integral} into
\eqref{H-middle}, one has
\bea\label{H-middle:0}
&&\mathcal{H}_r^{\pm}(n_1^2n_2,m,q,\Xi)\nonumber\\
&=&c_0Q\left(\frac{q^3n_1^2n_2}{N_0^5r}\right)^{1/6}
\int_0^\infty V(y)G_{\natural}(y)y^{7/12}\nonumber\\
&&e\left(t\varphi(y)\pm\frac{2}{q}(N_0m)^{1/2}y^{1/2}
+\frac{3}{q}\left(\frac{N_0n_1^2n_2}{r}\right)^{1/3}y^{1/3}\right)
\mathrm{d}y+O_A\left(N_0^{-A}\right)
\eea
for some absolute constant $c_0$.

Further substituting \eqref{H-middle:0} into \eqref{after GL3 Voronoi:2}, we arrive at
\bea\label{beforeCauchy}
\mathbf{M}_r^{\pm}(\Xi)&=&\frac{c_0N_0^{5/12}}{r^{2/3}}
\sum_{n_1^2n_2\sim N_1}\left(\frac{n_1}{n_2}\right)^{1/3}
\lambda_{\pi}\left(n_1,n_2\right)
\sum_{q\sim C\atop n_1|qr}
\frac{1}{q^{3/2}}\sum_{m\sim M}\frac{\lambda_f(m)}{m^{1/4}}w\left(\frac{m}{M}\right)\nonumber\\
&&\qquad\qquad\times\mathfrak{C}_r(n_1,n_2,m,q)\,
\mathfrak{J}_r^{\pm}(n_1^2n_2,m,q)+O_A\left(N_0^{-A}\right),
\eea
where
\bea\label{J-definition}
\mathfrak{J}_r^{\pm}(n_1^2n_2,m,q)
=\int_0^\infty \widetilde{V}(y)
e\left(t\varphi(y)\pm\frac{2}{q}(N_0m)^{1/2}y^{1/2}
+\frac{3}{q}\left(\frac{N_0n_1^2n_2}{r}\right)^{1/3}y^{1/3}\right)
\mathrm{d}y.
\eea
Here $\widetilde{V}(y)=V(y)G_{\natural}(y)y^{7/12}$,
satisfying $\widetilde{V}^{(j)}(y)\ll_j \triangle^j$ and $\text{Var}(\widetilde{V})\ll 1$.
Recall $\triangle$ denotes the quantity such that $V^{(j)}(x)\ll \triangle^j$; see \eqref{derivative-of-V}.

\begin{remark}
For later purpose, we derive a direct estimate for $\mathfrak{J}_r^{\pm}(n_1^2n_2,m,q)$. By making a change of variable $y=z^2$,
\bna
\mathfrak{J}_r^{\pm}(n_1^2n_2,m,q)
=2\int_0^\infty z\widetilde{V}(z^2)
e\left(t\varphi(z^2)\pm\frac{2}{q}(N_0m)^{1/2}z
+\frac{3}{q}\left(\frac{N_0n_1^2n_2}{r}\right)^{1/3}z^{2/3}\right)
\mathrm{d}z.
\ena
If we assume $\left(\varphi(z^2)\right)''\neq 0$, equivalently $\varphi(z)\neq cz^{1/2}+c_0$,
then the second derivative of the phase function satisfies
\bna
t\left(\varphi(z^2)\right)''-\frac{2}{3}\left(\frac{N_0n_1^2n_2}{q^3r}\right)^{1/3}z^{-4/3}
\gg \max\{t, \lambda\}
\ena
and by Lemma \ref{lem: 2st derivative test, dim 1}, we have
\bea\label{J minus:second derivative estimate}
\mathfrak{J}_r^{\pm}(n_1^2n_2,m,q)\ll  \max\{t,\lambda\}^{-1/2},
\eea
with
\bea\label{lambda}
\lambda:=\left(N_0N_1/(C^3r)\right)^{1/3},
\eea
except for the case $t\asymp \lambda$. For the later exceptional case, we will use an $L^2$-estimate due to Munshi
(see \cite[Lemma 1]{Mun6})
\bea\label{L square bound}
\int_0^{\infty}\psi(\xi)\left|\mathfrak{J}_r^{\pm}(N_1\xi^3,m,q)\right|^2\mathrm{d}\xi
\ll \max\{t, \lambda\}^{-1},
\eea
where $\psi$ is a bump function.
\end{remark}

\subsection{Application of the Poisson summation}
Applying the Cauchy--Schwarz inequality and using the Rankin--Selberg estimate \eqref{GL3-Rankin--Selberg}, one sees that the
$\mathbf{M}_r^{\pm}(\Xi)$ in
\eqref{beforeCauchy} is bounded by
\bna
\frac{N_0^{5/12}N_1^{1/6}}{r^{2/3}}\left(\sum_{n_1^2n_{2}\sim N_1}n_1^2\bigg|
\sum_{q\sim C\atop n_{1}|qr}q^{-3/2}
\sum_{m\sim M}\frac{\lambda_{f}(m)}{m^{1/4}}w\left(\frac{m}{M}\right)
\mathfrak{C}_r(n_1,n_2,m,q) \mathfrak{J}_r^{\pm}(n_1^2n_2,m,q)\bigg|^2\right)^{1/2}.
\ena
 As in Munshi \cite{Mun6}, we write $q=q_1q_2$ with $q_1|(n_1r)^{\infty}$,
$(q_2,n_1r)=1$ and apply Cauchy--Schwarz again to get that the expression
inside the absolute value being
\bna
&&\sum_{n_1|q_1r\atop q_1|(n_1r)^{\infty}}q_1^{-3/2}
\sum_{q_2\sim C/q_1}q_2^{-3/2}\sum_{m\sim M}\frac{\lambda_{f}(m)}{m^{1/4}}w\left(\frac{m}{M}\right)
\mathfrak{C}_r(n_1,n_2,m,q_1q_2)  \mathfrak{J}_r^{\pm}\left(n_1^2n_2,m,q_1q_2\right)
\nonumber\\
&\ll&N_0^{\varepsilon}\left(
\sum_{n_1|q_1r\atop q_1|(n_1r)^{\infty}}q_1^{-3}
\bigg|\sum_{q_2\sim C/q_1}q_2^{-3/2}
\sum_{m\sim M}\frac{\lambda_{f}(m)}{m^{1/4}}w\left(\frac{m}{M}\right)
\mathfrak{C}_r(n_1,n_2,m,q_1q_2)
\mathfrak{J}_r^{\pm}\left(n_1^2n_2,m,q_1q_2\right)\bigg|^2\right)^{1/2}.
\ena
Therefore we have
\bea\label{Cauchy}
\mathbf{M}_r^{\pm}(\Xi)\ll
\frac{N_0^{5/12+\varepsilon}N_1^{1/6}}{r^{2/3}}
\bigg\{\sum_{n_1\leq Cr}n_1^2\sum_{n_1|q_1r\atop q_1|(n_1r)^{\infty}}q_1^{-3}\times
\mathbf{\Omega}^{\pm}(n_1,q_1,r)\bigg\}^{1/2},
\eea
where
\bea\label{Omega}
\mathbf{\Omega}^{\pm}(n_1,q_1,r)&=&
\sum_{n_{2}}\phi\left(\frac{n_2}{N_1/n_1^2}\right)\bigg|
\sum_{q_2\sim C/q_1}q_2^{-3/2}
\sum_{m\sim M}\frac{\lambda_{f}(m)}{m^{1/4}}w\left(\frac{m}{M}\right) \nonumber\\
&&\quad\quad\quad\quad\quad\quad\times \mathfrak{C}_r(n_1,n_2,m,q_1q_2)
\mathfrak{J}_r^{\pm}\left(n_1^2n_2,m,q_1q_2\right)\bigg|^2.
\eea
Here $\phi$ is a nonnegative smooth function on $(0,+\infty)$, supported on $[2/3,3]$, and such
that $\phi(x)=1$ for $x\in [1,2]$.

Opening the absolute square, we break the $n_2$-sum into congruence classes
modulo $q_1q_2q'_2r/n_1$, and then apply the Poisson summation formula to the sum over $n_2$.
It is therefore sufficient to consider the following sum (with a little abuse of notation)
\bea\label{omega-bound}
\mathbf{\Omega}^{\pm}(n_1,q_1,r)
&=&\sum_{q_2\sim C/q_1}q_2^{-3/2}\sum_{q_2'\sim C/q_1}q_2'^{-3/2}
\sum_{m\sim M}\frac{1}{m^{1/4}}\sum_{m'\sim M}
\frac{|\lambda_{f}(m')|^2}{m'^{1/4}}\nonumber\\
&&\qquad\qquad\qquad\qquad\qquad\times
\frac{N_1}{n_1^2}\sum_{\tilde{n}_2\in \mathbb{Z}}\left|\mathfrak{K}(\tilde{n}_2)\right|
\, \left|\mathfrak{I}^{\pm}\left(\frac{N_1\tilde{n}_2}{q_2q_2'q_1n_1r}\right)\right|,
\eea
where the character sum $\mathfrak{K}(\tilde{n}_2):=\mathfrak{K}(\tilde{n}_2,m,m',q_2,q_2',n_1,q_1,r)$ is
given by
\bea\label{K character sum}
\mathfrak{K}(\tilde{n}_2)=\frac{n_1}{q_2q_2'q_1r}\sum_{\beta\bmod q_2q_2'q_1r/n_1}
\mathfrak{C}_r(n_1,\beta,m,q_1q_2)
\overline{\mathfrak{C}_r(n_1,\beta,m',q_1q_2')}
\, e\left(\frac{\tilde{n}_2\beta}{q_2q_2'q_1r/n_1}\right)
\eea
and the integral $\mathfrak{I}^{\pm}(X)=\mathfrak{I}^{\pm}(X;m,m',q_2,q_2',q_1,r)$ is given by
\bea\label{I-integral}
\mathfrak{I}^{\pm}(X)=\int_{\mathbb{R}}
\phi\left(\xi\right)
\mathfrak{J}_r^{\pm}\left(N_1\xi,m,q_1q_2\right)
\overline{\mathfrak{J}_r^{\pm}\left(N_1\xi,m',q_1q_2'\right)}
\, e\left(-X\xi\right)\mathrm{d}\xi.
\eea
We will finish the treatment of $\mathbf{\Omega}^{\pm}(n_1,q_1,r)$ and thus the targeted
sum $\mathscr{S}_r(N_0)$ in \eqref{aim-sum} upon plugging appropriate estimates for $\mathfrak{K}(\tilde{n}_2)$ and $\mathfrak{I}^{\pm}(X)$ into \eqref{omega-bound}.

\begin{remark}\label{new-length-n2}
We point out, before analyzing the integral $\mathfrak{I}^{\pm}(X)$ in details,
how we can effectively truncate the length of the $\tilde{n}_2$-sum in \eqref{omega-bound}. Note that before applying the Poisson summation formula, the ``arithmetic conductor" in the $n_2$-sum is $q_1q_2q'_2r/n_1$, and the ``analytic conductor" (i.e., the size of oscillation of the weight function in the $n_2$-variable) is of size $\frac{3}{q}\left(\frac{N_0n_1^2n_2}{r}\right)^{1/3}\asymp (\frac{N_0N_1}{C^3r})^{1/3}$; see \eqref{J-definition}. Hence the dual $n_2$-sum can be truncated at $|\tilde{n}_2|\ll \frac{\frac{q_1q_2q'_2r}{n_1}  (\frac{N_0N_1}{C^3r})^{1/3}}{N_1/n_1^2}\asymp \frac{n_1r^{2/3}CN_0^{1/3}}{q_1 N_1^{2/3}}$; see \eqref{N2 range}.
\end{remark}

We have the following estimates for $\mathfrak{K}(\tilde{n}_2)$ and $\mathfrak{I}^{\pm}(X)$, whose proofs we postpone to Section \ref{proofs-of-technical-lemmas}.
\begin{lemma}\label{lem: character sum}
Let $\mathfrak{K}(\tilde{n}_2)$ be as in \eqref{K character sum}. Then
\bea\label{character sum: nonzero}
\mathfrak{K}(\tilde{n}_2)\ll\mathop{\sum\sum}_{\substack{d_1, d_1'|q_1}}d_1d_1'
\mathop{\mathop{\sideset{}{^\star}\sum}_{\substack{\alpha\bmod{q_1r/n_1}\\
n_1\alpha\equiv -m\bmod{d_1}}}\;\mathop{\sideset{}{^\star}\sum}_{\substack{\alpha'\bmod{q_1r/n_1}
\\n_1\alpha'\equiv -m'\bmod{d_1'}}}}_{q_2\bar{\alpha}'-q_2'\bar{\alpha}
\equiv \tilde{n}_2\bmod{q_1r/n_1}}\; \mathop{\sum\sum}_{\substack{d_2|(q_2,q_2'n_1- m\tilde{n}_2)
\\ d_2'|(q_2',q_2n_1+ m'\tilde{n}_2)}}d_2d_2'.
\eea
Moreover, if $\tilde{n}_2=0$, we imply
\bea\label{q2=q2prime}
q_2=q_2',
\eea
 and
\bea\label{character sum: zero}
\mathfrak{K}(0)\ll q_1q_2r\mathop{\sum\sum}_{\substack{d, d'|q_1q_2\\(d,d')|(m-m')}}(d,d').
\eea
\end{lemma}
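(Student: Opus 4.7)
\noindent\textbf{Proof plan for Lemma~\ref{lem: character sum}.} My plan is to evaluate $\mathfrak{K}(\tilde n_2)$ in stages: first open the Kloosterman sums inside $\mathfrak{C}_r$ and execute the $\beta$-sum, then apply the Chinese Remainder Theorem (CRT) to split the resulting congruence across the three moduli $q_2$, $q_2'$, and $q_1r/n_1$, and finally evaluate each factor using Ramanujan-type bounds.

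Writing $S(ra,\beta;q_1q_2r/n_1)=\sideset{}{^\star}\sum_{x}e\bigl((rax+\bar x\beta)/(q_1q_2r/n_1)\bigr)$ and similarly for the primed sum, I will swap summations so that $\beta$ sits innermost. Combining the three $\beta$-dependent phases over the common denominator $q_1q_2q_2'r/n_1$, the $\beta$-sum collapses to the single congruence
\begin{equation*}
q_2'\bar{x}-q_2\bar{x'}+\tilde{n}_2\equiv 0\pmod{q_1q_2q_2'r/n_1},
\end{equation*}
and the accompanying factor $q_1q_2q_2'r/n_1$ cancels the $n_1/(q_1q_2q_2'r)$ prefactor in the definition of $\mathfrak{K}$.

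Next, since $(q_2q_2',n_1r)=1$ while $q_1\mid(n_1r)^\infty$, CRT splits the modulus into the unramified pieces at $q_2$, $q_2'$ and the ramified piece at $q_1r/n_1$. Reducing mod $q_2$, the congruence becomes $q_2'\bar{x}+\tilde{n}_2\equiv 0\pmod{q_2}$; pairing this with the $e(ma/q_2)$-weight coming from the CRT decomposition of the $a$-variable and summing the completed character sum over $x\bmod q_2$ produces a Ramanujan-type sum whose standard divisor bound yields the factor $\sum_{d_2\mid(q_2,\,q_2'n_1-m\tilde n_2)}d_2$; the symmetric computation mod $q_2'$ produces $d_2'$. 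For the $q_1r/n_1$-piece I parametrize $\alpha,\alpha'$ as the residues of $a,a'$ modulo $q_1r/n_1$; the surviving congruence is $q_2\bar{\alpha'}-q_2'\bar{\alpha}\equiv \tilde{n}_2\pmod{q_1r/n_1}$, and the common factors $d_1=\gcd(q_1,n_1\alpha+m)$ and $d_1'=\gcd(q_1,n_1\alpha'+m')$ obstruct full Kloosterman cancellation, contributing factors $d_1$, $d_1'$ together with the compatibilities $n_1\alpha\equiv -m\pmod{d_1}$ and $n_1\alpha'\equiv -m'\pmod{d_1'}$. Assembling the three pieces gives \eqref{character sum: nonzero}.

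For the zero-frequency case $\tilde n_2=0$, the $q_2$-congruence degenerates to $q_2'\bar x\equiv 0\pmod{q_2}$; since $\bar x$ is a unit this forces $q_2\mid q_2'$, and symmetry mod $q_2'$ yields $q_2=q_2'$. After this reduction the sum collapses to a single modulus $q_1q_2r/n_1$, the character sum becomes essentially diagonal in $m-m'$, and the same Ramanujan-type analysis produces \eqref{character sum: zero}. The main technical obstacle will be the CRT bookkeeping at the ramified factor $q_1r/n_1$: because $q_1\mid(n_1r)^\infty$ one cannot simply separate $q_1$ from the powers of primes dividing $n_1r$, so the divisors $d_1,d_1'$ must be tracked explicitly to ensure that cancellation in the $\alpha,\alpha'$-sums is neither lost nor double-counted, while simultaneously the $e(ma/q_1)$ weights factor compatibly with the Kloosterman part across the decomposition.
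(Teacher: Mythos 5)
Your proposal is correct and follows essentially the same route as the paper: expand the Kloosterman sums, execute the $\beta$-sum to obtain the orthogonality congruence $q_2\bar{\alpha'}-q_2'\bar{\alpha}\equiv\tilde n_2\bmod q_1q_2q_2'r/n_1$, CRT-factor into the $q_1r/n_1$ and $q_2,q_2'$ pieces, and extract divisibility constraints from the resulting Ramanujan sums (the paper evaluates the Ramanujan sum $c_{q_1q_2}(m+n_1\alpha)$ upfront before the $\beta$-sum, whereas you evaluate the local Ramanujan sums after the CRT, but the computation is identical). Your flagged ``technical obstacle'' at the ramified modulus is a non-issue — since $q_1\mid(n_1r)^\infty$ and $(q_2,n_1r)=1$ the factor $q_1r/n_1$ is coprime to $q_2q_2'$, so one simply keeps it intact as one CRT component exactly as the $d_1,d_1'$-bookkeeping in \eqref{character sum: nonzero} records.
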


\begin{lemma}\label{integral:lemma}
Let $\mathfrak{I}^{\pm}(X)$ be as in \eqref{I-integral}. Denote $\lambda=\left(N_0N_1/C^3r\right)^{1/3}$.

(1) We have $\mathfrak{I}^{\pm}(X)=O\left(N_0^{-A}\right)$ if $X\gg \lambda^{1+\varepsilon}$.

(2) For $X\ll \lambda^{3+\varepsilon}\max\left\{t,\sqrt{N_0M}/C\right\}^{-2}$, we have
   $\mathfrak{I}^+(X)\ll \max\left\{t,\sqrt{N_0M}/C\right\}^{-1}$.

(3) Assume $\triangle<t^{1/2-\varepsilon}$ (Here $\triangle$ is such that
$V^{(j)}(x)\ll \triangle^j$, defined in \eqref{derivative-of-V}.) and $C$ satisfies $\frac{N_0^{1+\varepsilon}}{CQ}\ll t^{1-\varepsilon}$. Further assume $\varphi$ satisfies the condition \eqref{phi assumption}. Then for $X$ such that $\lambda^{3+\varepsilon}\max\left\{t,\sqrt{N_0M}/C\right\}^{-2} \ll X\ll \lambda^{2+\varepsilon}
\max\left\{t,\sqrt{N_0M}/C\right\}^{-1}$, we have
$\mathfrak{I}^+(X)\ll\max\left\{t,\sqrt{N_0M}/C\right\}^{-1}|X|^{-1/3}$.

(4) For $X\gg \lambda^{2+\varepsilon}\max\left\{t,\sqrt{N_0M}/C\right\}^{-1}$, we have
$\mathfrak{I}^+(X)\ll\max\left\{t,\sqrt{N_0M}/C\right\}^{-1}|X|^{-1/2}$.

(5) Let $q_2=q_2'$. Then
\bna
\mathfrak{I}^{\pm}(0)\ll N_0^{\varepsilon}\,\min\left\{t^{-1},
C\lambda^{-1}N_0^{-1/2}
\left|m^{1/2}-m'^{1/2}\right|^{-1}\right\}.
\ena
\end{lemma}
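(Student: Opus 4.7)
The plan is to treat the five bounds separately, exploiting oscillation in different variables. Parts (1) and (2) use direct derivative tests on a single integration variable, while (3)--(5) require stationary phase on each inner $y$-integral followed by an outer oscillatory-integral estimate in $\xi$, after a cube change of variables.

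For (1), expand $\mathfrak{I}^\pm(X)$ into a triple integral in $(y_1,y_2,\xi)$. The only $\xi$-dependence of the combined phase is through the two cube-root terms $\pm\frac{3}{q_1q_2}(N_0N_1\xi/r)^{1/3}y_1^{1/3}$ and $\mp\frac{3}{q_1q_2'}(N_0N_1\xi/r)^{1/3}y_2^{1/3}$, whose $\xi$-derivatives are uniformly of size $O(\lambda)$ on the support (from $q_1q_2,q_1q_2'\sim C$ and the definition of $\lambda$). Thus if $|X|\gg \lambda^{1+\varepsilon}$ the $\xi$-derivative of the full phase has modulus $\gg|X|$ throughout, and iterated integration by parts in $\xi$ yields $\mathfrak{I}^\pm(X)=O(N_0^{-A})$. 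For (2), apply the change of variable $y=z^3$ to $\mathfrak{J}_r^+(N_1\xi,m,q_1q_2)$; the new phase $t\varphi(z^3)+\frac{2(N_0m)^{1/2}}{q_1q_2}z^{3/2}+\frac{3(N_0N_1\xi/r)^{1/3}}{q_1q_2}z$ has second $z$-derivative $t(\varphi(z^3))''+\frac{3(N_0m)^{1/2}}{2q_1q_2}z^{-1/2}$, a sum of two same-signed terms by \eqref{varphi condition}, so $|\partial_z^2(\text{phase})|\gg\max\{t,\sqrt{N_0M}/C\}$. The second derivative test (Lemma \ref{lem: 2st derivative test, dim 1}) then gives $|\mathfrak{J}_r^+(N_1\xi,m,q_1q_2)|\ll \max\{t,\sqrt{N_0M}/C\}^{-1/2}$ uniformly in $\xi$, and trivial $\xi$-integration yields (2).

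For (3)--(4), substitute $\xi=w^3$ in the outer integral, so the $-X\xi$ term becomes the cubic $-Xw^3$. Before doing the $w$-integral, apply stationary phase (Lemma \ref{lemma:exponentialintegral}(2)) to each $\mathfrak{J}_r^+(N_1w^3,m,q_1q_2)$, extracting an amplitude of size $\max\{t,\sqrt{N_0M}/C\}^{-1/2}$ and a new phase $\widetilde\Psi_+(w,m,q_1q_2)$ evaluated at the $y$-stationary point $y_*(w,m,q_1q_2)$. Since the inner phase depends on $w$ only through the coefficient of $y^{1/3}$, the envelope theorem gives $\partial_w\widetilde\Psi_+\asymp \lambda$, and implicit differentiation of the stationarity equation yields $\partial_w y_*\asymp \lambda/\max\{t,\sqrt{N_0M}/C\}$; inductively $|\partial_w^j\widetilde\Psi_+|\asymp \lambda^j/\max\{t,\sqrt{N_0M}/C\}^{j-1}$ for $j=1,2,3$. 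The outer $w$-integral now has phase $\widetilde\Psi_+(w,m,q_1q_2)-\widetilde\Psi_+(w,m',q_1q_2')-Xw^3$, whose second derivative is $\asymp \lambda^2/\max\{t,\sqrt{N_0M}/C\}-6Xw$ and whose third derivative is $\asymp \lambda^3/\max\{t,\sqrt{N_0M}/C\}^2-6X$. In the range of (3), $|X|\gg \lambda^3/\max^2$ makes the third derivative $\asymp X$, and the third derivative test yields the $|X|^{-1/3}$ saving; in the range of (4), $|X|\gg \lambda^2/\max$ makes the second derivative $\asymp X$, and the second derivative test yields the $|X|^{-1/2}$ saving. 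Combining with the amplitude $\max\{t,\sqrt{N_0M}/C\}^{-1}$ gives the claimed bounds.

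For (5), the $t^{-1}$ bound is immediate from Cauchy--Schwarz and the $L^2$ estimate \eqref{L square bound}. For the alternative bound, rerun the argument of (3)--(4) at $X=0$, $q_2=q_2'$: the $w$-derivative of $\widetilde\Psi_+(w,m,q_1q_2)-\widetilde\Psi_+(w,m',q_1q_2)$ equals $\frac{3(N_0N_1/r)^{1/3}}{q_1q_2}(y_{*,m}^{1/3}-y_{*,m'}^{1/3})$, and implicit differentiation in the variable $m^{1/2}$ (so that $m-m'=(m^{1/2}-m'^{1/2})(m^{1/2}+m'^{1/2})$ absorbs the expected square-root size of $M$) yields $\partial_{m^{1/2}}y_*\asymp N_0^{1/2}/(C\max\{t,\sqrt{N_0M}/C\})$. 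Hence the $w$-derivative of the phase is $\asymp \lambda N_0^{1/2}|m^{1/2}-m'^{1/2}|/(C\max\{t,\sqrt{N_0M}/C\})$; integration by parts in $w$, combined with the amplitude $\max\{t,\sqrt{N_0M}/C\}^{-1}$, gives the claimed $C\lambda^{-1}N_0^{-1/2}|m^{1/2}-m'^{1/2}|^{-1}$ bound. The main technical obstacle throughout (3)--(5) is ensuring that the inner second $y$-derivative stays $\gg\max\{t,\sqrt{N_0M}/C\}$ uniformly (the role of \eqref{varphi condition} and the $y=z^3$ substitution is to prevent accidental cancellation of the $t\varphi''$, $y^{1/2}$, and $y^{1/3}$ contributions) so that stationary phase applies with the claimed amplitude, and then tracking the joint dependence of $y_*$ on $w$, $m$, $q$ through implicit differentiation to control higher derivatives of $\widetilde\Psi_+$.
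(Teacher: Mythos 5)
Your overall strategy is sound and parts (1)--(3) follow the paper's own route closely. For (1), the paper packages the inner $\xi$-integral as $H(X,3\lambda Y)$ (Lemma~\ref{middle integral:lemma}(1)) and applies integration by parts there; your direct integration by parts in $\xi$ is the same computation. For (2), both you and the paper convert $y=z^3$ and apply the 1-dim second derivative test to the inner integral using \eqref{varphi condition}. For (3), the paper and you both substitute $\xi=w^3$, perform stationary phase on the inner $y$-integrals, and then apply the third derivative test to the outer $w$-integral; you argue via the envelope theorem and implicit differentiation to get $\partial_w^j\widetilde\Psi_+\ll\lambda^j/\max\{t,\sqrt{N_0M}/C\}^{j-1}$, while the paper writes $\varphi'(y)=-cy^{-\beta}$ ($\beta\neq 1/2$) explicitly and develops the stationary point $y_*$ and $g(y_*)$ as convergent power series in $B/t\ll N_0^{-\varepsilon}$; both routes depend crucially on the restriction \eqref{phi assumption}, the condition $\triangle<t^{1/2-\varepsilon}$, and the large-modulus constraint that makes $B\ll N_0^{-\varepsilon}t$ so that $D\asymp t$.

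For (4) and (5), however, you take a genuinely different path from the paper, and the trade-off is worth noting. For (4), the paper performs stationary phase on the \emph{outer} $\xi$-integral via $H(X,3\lambda Y)$ (Lemma~\ref{middle integral:lemma}(2)), extracting the factor $|X|^{-1/2}$, and then applies the two-dimensional second derivative test (Lemma~\ref{lem: 2nd derivative test, dim 2}) to the remaining $(z_1,z_2)$-integral to gain $\max\{t,\sqrt{N_0M}/C\}^{-1}$; this uses only the general condition \eqref{varphi condition} and neither the special form of $\varphi$ nor stationary phase on the $y$-integrals. Your approach reuses the inner-stationary-phase machinery from (3) and then applies a 1-dim second derivative test to the $w$-integral; this does give the correct bound in the range $X\gg \lambda^{2+\varepsilon}/\max$, since $\partial_w^2(\widetilde\Psi_+-\widetilde\Psi_+')\ll\lambda^2/\max\ll X$ so that $-6Xw$ dominates, but it establishes a formally weaker version of (4) that carries the same auxiliary hypotheses as (3) (the statement of (4) in the Lemma omits them). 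For (5), the paper changes variables to $y=z^2$, uses the Fourier-transform identity $H(0,3\lambda Y)=\phi_0(-3\lambda Y)$ (Lemma~\ref{middle integral:lemma}(3)) to localize $|z_1-z_2|\ll\lambda^{-1+\varepsilon}$, and integrates by parts once in $z_2$, exploiting that $\partial_{z_2}f_0$ equals a constant $\pm\frac{2N_0^{1/2}}{q}(m^{1/2}-m'^{1/2})$ plus an $O(\lambda^{-1+\varepsilon}t)$ error; this works for all $\lambda$ uniformly and requires no stationary phase on the inner integrals. Your approach (inner stationary phase, then IBP in $w$ using the oscillation $\partial_w(\widetilde\Psi_+-\widetilde\Psi'_+)\asymp\lambda N_0^{1/2}|m^{1/2}-m'^{1/2}|/(C\max)$) does reproduce the bound, but there are two things you need to flesh out: (i) you must verify that the $w$-derivative of the phase is monotonic on the support (i.e., that $\partial_w^2(\widetilde\Psi_+-\widetilde\Psi'_+)\ll(\lambda/\max)\cdot\partial_w(\widetilde\Psi_+-\widetilde\Psi'_+)$, which holds since $\lambda\ll N_0^{-\varepsilon}\max$ in the stationary-phase regime) so that a single integration by parts indeed gains the factor $1/\partial_w(\text{phase})$; and (ii) your argument only applies where the inner stationary phase is valid, i.e., $\lambda\ll N_0^{-\varepsilon}t$, and you should separately dispose of the complementary regimes (when $\lambda\gg N_0^{\varepsilon}t$ the inner integrals are negligible; the delicate boundary case $\lambda\asymp t$ still works with your asymptotics but needs explicit mention). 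The paper's Fourier-inversion proof of (5) sidesteps both issues, which is why part (5) in the Lemma is stated and used (e.g., in Lemma~\ref{lemma:zero}, across all modulus ranges $1\ll C\ll Q$) without the hypotheses of part (3).
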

\begin{remark} The principle of the proof is similar to \cite[Lemma 5.5]{AHLQ}
and \cite[Lemma 5]{Mun6}. The first bound for $\mathfrak{I}^{\pm}(0)$ in
Lemma \ref{integral:lemma} (5) should be regarded as the ``trivial" estimate, while the second bound will help us to take care of the case $m\neq m'$ when we estimate the sum in \eqref{omega-bound}, whose contribution in fact turns out to be smaller compared to those with $m=m'$; see the proof in Section \ref{zero-frequency}.
\end{remark}

With estimates for $\mathfrak{K}(\tilde{n}_2)$ and $\mathfrak{I}^{\pm}(X)$ ready, we now continue with the treatment of $\mathbf{\Omega}^{\pm}(n_1,q_1,r)$
in \eqref{omega-bound}. Denote $X=\frac{N_1\tilde{n}_2}{q_2q_2'q_1n_1r}$ and
$\lambda=\left(N_0N_1/C^3r\right)^{1/3}$. If $X\gg \lambda^{1+\varepsilon}$, that is,
if
\bea\label{N2 range}
|\tilde{n}_2|\gg \frac{q_2q_2'q_1n_1r}{N_1}\lambda^{1+\varepsilon}\asymp
\frac{C^2n_1r}{N_1q_1}\lambda^{1+\varepsilon}
\asymp \frac{n_1r^{2/3}CN_0^{1/3+\varepsilon}}{q_1 N_1^{2/3}}:=N_2,
\eea
then by Lemma \ref{integral:lemma} (1), the contribution from such terms to $\mathbf{\Omega}^{\pm}(n_1,q_1,r)$ is negligible (cf. Remark \ref{new-length-n2}). We only need to consider the range $0\leq |\tilde{n}_2|\leq N_2$.

We treat the cases where $\tilde{n}_2=0$ and $\tilde{n}_2\neq 0$ separately and denote their contributions to $\mathbf{\Omega}^{\pm}(n_1,q_1,r)$ by $\mathbf{\Omega}^{\pm}_0$ and $\mathbf{\Omega}^{\pm}_{\neq 0}$, respectively.

\subsection{The zero frequency}\label{zero-frequency}
Let $\mathbf{\Sigma}^{\pm}_{0}$ denote the contribution of $\mathbf{\Omega}^{\pm}_0$ to \eqref{Cauchy}.

For $\tilde{n}_2=0$, from \eqref{q2=q2prime} we necessarily have $q_2=q'_2$ in \eqref{omega-bound}.
Splitting the sum over $m$  and  $m'$  according as  $m=m'$ or not,
and by applying \eqref{character sum: zero} and Lemma \ref{integral:lemma} (5), we have
\bna
\mathbf{\Omega}^{\pm}_0
&\ll&\frac{N^{\varepsilon}N_1q_1r}{n_1^2} t^{-1}
\sum_{q_2\sim C/q_1}q_2^{-2}\sum_{m'\sim M}\frac{|\lambda_{f}(m')|^2}{m'^{1/2}}
\mathop{\sum\sum}_{d,d'|q_1q_2}(d,d')\\
&+&\frac{N^{\varepsilon}CN_1q_1r}{\lambda N_0^{1/2}n_1^2}
\sum_{q_2\sim C/q_1}q_2^{-2}\sum_{m\sim M}\frac{1}{m^{1/4}}
\mathop{\sum}_{\substack{m'\sim M\\m'\neq m}}\frac{|\lambda_{f}(m')|^2}{m'^{1/4}}
\mathop{\sum}_{\substack{d, d'|q_1q_2\\(d,d')|(m-m')}}(d,d')\frac{M^{1/2}}{|m-m'|},
\ena
where $\lambda=\left(N_0N_1/C^3r\right)^{1/3}$. Here we have replaced the factor $|m^{1/2}-m'^{1/2}|^{-1}$ by $\frac{m^{1/2}+m'^{1/2}}{|m-m'|}\asymp \frac{M^{1/2}}{|m-m'|}$.
Using \eqref{GL2: Rankin Selberg} and the divisor bound, we derive
\bna
\mathbf{\Omega}^{\pm}_0&\ll& \frac{N^{\varepsilon}N_1M^{1/2}q_1^2r}{n_1^2t}
+\frac{N^{\varepsilon}MN_1q_1^2r}{\lambda N_0^{1/2}n_1^2}
\sum_{q_2\sim C/q_1}q_2^{-1}\sum_{1\leq l<M}l^{-1}
\mathop{\sum}_{\substack{d, d'|q_1q_2\\(d,d')|l}}(d,d')\\
&\ll& N^{\varepsilon}\frac{N_1q_1^2r}{n_1^2}
\left(\frac{M^{1/2}}{t}+\frac{MCr^{1/3}}{N_0^{5/6}N_1^{1/3}}\right).
\ena
This bound when substituted in place of $\mathbf{\Omega}^{\pm}(n_1,q_1,r)$ into \eqref{Cauchy} yields that
\bea\label{estimate-1}
\mathbf{\Sigma}^{\pm}_0&\ll& \frac{N_0^{5/12+\varepsilon}N_1^{1/6}}{r^{2/3}}
\left\{\sum_{n_1\leq Cr}n_1^2\sum_{n_1|q_1r\atop q_1|(n_1r)^{\infty}}q_1^{-3}\times
\frac{N_1q_1^2r}{n_1^2}
\left(\frac{M^{1/2}}{t}+\frac{MCr^{1/3}}{N_0^{5/6}N_1^{1/3}}\right)\right\}^{1/2}\nonumber\\
&\ll&\frac{N_0^{5/12+\varepsilon}}{r^{1/6}}
\left(\frac{M^{1/4}N_1^{2/3}}{t^{1/2}}+
\frac{M^{1/2}C^{1/2}r^{1/6}N_1^{1/2}}{N_0^{5/12}}\right).
\eea
We recall from \eqref{M-N1-range} that $1\ll M\ll N_0^{\varepsilon}\max\left\{C^2t^2/N_0,N_0/Q^2\right\}$ and $1\ll N_1\ll N_0^{2+\varepsilon}r/Q^3$.
For $C\ll Q$, then
\bna
\mathbf{\Sigma}^{\pm}_0
&\ll& N^{\varepsilon}r^{1/2}
\left(\frac{N_0^{7/4}M^{1/4}}{Q^{2}t^{1/2}}+\frac{N_0M^{1/2}}{Q}\right)\\
&\ll& N^{\varepsilon}r^{1/2}
\left(\frac{N_0^{3/2}}{Q^{3/2}}+
\frac{N_0^2}{Q^{5/2}t^{1/2}}+N_0^{1/2}t
+\frac{N_0^{3/2}}{Q^2}\right).
\ena
In particular, if we further assume $Q$ satisfies $(N_0/t)^{1/2}<Q<(N_0/t)^{2/3}$, then we get
\bna
\mathbf{\Sigma}^{\pm}_0
\ll N^{\varepsilon}r^{1/2}\left(\frac{N_0^{3/2}}{Q^{3/2}}+N_0^{1/2}t\right)
\ll \frac{r^{1/2}N_0^{3/2+\varepsilon}}{Q^{3/2}}.
\ena
Hence we have shown the following.
\begin{lemma}\label{lemma:zero}
Assume
\bea\label{assumption: range 1}
(N_0/t)^{1/2}<Q<(N_0/t)^{2/3}.
\eea
We have
\bea\label{Sigma-0-bound}
\mathbf{\Sigma}^{\pm}_0\ll  \frac{r^{1/2}N_0^{3/2+\varepsilon}}{Q^{3/2}}.
\eea
\end{lemma}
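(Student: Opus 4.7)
The plan is to specialize $\mathbf{\Omega}^{\pm}(n_1,q_1,r)$ of \eqref{omega-bound} to the frequency $\tilde{n}_2=0$, using the two key inputs proved earlier: Lemma \ref{lem: character sum} and Lemma \ref{integral:lemma}(5). First I would invoke \eqref{q2=q2prime}, which forces $q_2=q_2'$ in the zero-frequency contribution, after which \eqref{character sum: zero} gives $\mathfrak{K}(0) \ll q_1 q_2 r \sum_{d,d'\mid q_1 q_2,\;(d,d')\mid (m-m')}(d,d')$. For the oscillatory integral I would exploit both estimates in Lemma \ref{integral:lemma}(5): the trivial bound $t^{-1}$ for the diagonal terms $m=m'$, and the sharper bound $C\lambda^{-1}N_0^{-1/2}|m^{1/2}-m'^{1/2}|^{-1}$ for the off-diagonal terms $m\neq m'$, where $\lambda=(N_0 N_1/C^3 r)^{1/3}$.

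For the diagonal piece, the Rankin--Selberg estimate \eqref{GL2: Rankin Selberg} controls $\sum_{m \sim M}|\lambda_f(m)|^2 m^{-1/2} \ll M^{1/2+\varepsilon}$, while the divisor bound handles the double sum over $d,d'\mid q_1 q_2$. For the off-diagonal piece I would write $l=|m-m'|$ and use $|m^{1/2}-m'^{1/2}|^{-1}\asymp M^{1/2}/l$; the remaining sum over $l$ collapses to $O(\log M)$ by the divisor bound, yielding
\[
\mathbf{\Omega}^{\pm}_0 \;\ll\; N_0^{\varepsilon}\,\frac{N_1 q_1^2 r}{n_1^2}\left(\frac{M^{1/2}}{t}+\frac{MCr^{1/3}}{N_0^{5/6}N_1^{1/3}}\right).
\]

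Feeding this into the Cauchy--Schwarz bound \eqref{Cauchy}, the outer sum $\sum_{n_1 \leq Cr} n_1^2 \sum_{n_1\mid q_1 r,\, q_1\mid(n_1 r)^\infty} q_1^{-3}$ weighted by $N_1 q_1^2 r/n_1^2$ reduces to an essentially $r^{1+\varepsilon}$ total via standard multiplicative estimates, producing the expression \eqref{estimate-1} for $\mathbf{\Sigma}^{\pm}_0$. Next I would substitute the dyadic ranges $M \ll N_0^\varepsilon \max\{C^2 t^2/N_0,\, N_0/Q^2\}$ from \eqref{M-N1-range} and $N_1 \ll N_0^{2+\varepsilon} r/Q^3$. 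The lower bound $Q > (N_0/t)^{1/2}$ forces $C^2 t^2/N_0 \ll N_0/Q^2$ for all $C \ll Q$, so one may take $M \ll N_0^{1+\varepsilon}/Q^2$; after simplification the four resulting terms become $N_0^{3/2}/Q^{3/2}$, $N_0^2/(Q^{5/2}t^{1/2})$, $N_0^{1/2}t$, and $N_0^{3/2}/Q^2$.

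Finally, the upper bound $Q < (N_0/t)^{2/3}$ is exactly what is needed to guarantee $N_0^{1/2} t \leq N_0^{3/2}/Q^{3/2}$, and a routine comparison using both constraints shows the same term $N_0^{3/2}/Q^{3/2}$ dominates the other two as well, giving $\mathbf{\Sigma}^{\pm}_0 \ll r^{1/2} N_0^{3/2+\varepsilon}/Q^{3/2}$ as claimed. The main obstacle is more conceptual than technical: it is essential that the refined off-diagonal bound in Lemma \ref{integral:lemma}(5) is sharp enough to make the contribution from $m\neq m'$ no worse than the diagonal $m=m'$ contribution; without this, the trivial bound $\mathfrak{I}^\pm(0)\ll t^{-1}$ alone would have forced a summation over $m,m'$ with no cancellation in $|m-m'|$ and spoiled the exponent. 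Verifying that the two-sided restriction on $Q$ in \eqref{assumption: range 1} precisely selects $N_0^{3/2}/Q^{3/2}$ as the dominant term is the other delicate point.
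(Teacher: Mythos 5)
Your proposal mirrors the paper's argument step by step: forcing $q_2=q_2'$ via \eqref{q2=q2prime}, invoking \eqref{character sum: zero} for $\mathfrak{K}(0)$, splitting diagonal $m=m'$ (trivial $t^{-1}$ bound) from off-diagonal $m\ne m'$ (sharp bound from Lemma \ref{integral:lemma}(5)), estimating the off-diagonal sum via $|m^{1/2}-m'^{1/2}|^{-1}\asymp M^{1/2}/|m-m'|$ and divisor counts, feeding the resulting bound for $\mathbf{\Omega}^{\pm}_0$ into \eqref{Cauchy}, substituting the dyadic ranges for $M,N_1$, and checking that both constraints on $Q$ in \eqref{assumption: range 1} select $N_0^{3/2}/Q^{3/2}$ as dominant. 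This is precisely the paper's route.

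One sentence is stated backwards and self-inconsistent, though it does not affect your final list of terms. You claim that $Q>(N_0/t)^{1/2}$ ``forces $C^2 t^2/N_0 \ll N_0/Q^2$ for all $C\ll Q$,'' but the inequality goes the other way: $Q^2t^2/N_0 > N_0/Q^2$ is exactly equivalent to $Q>(N_0/t)^{1/2}$, so for $C\asymp Q$ the first branch of the max dominates, not the second, and the comparison in fact depends on the size of $C$. Moreover, had you genuinely restricted to $M\ll N_0^{1+\varepsilon}/Q^2$, you would obtain only the two terms $N_0^2/(Q^{5/2}t^{1/2})$ and $N_0^{3/2}/Q^2$, not four; the terms $N_0^{3/2}/Q^{3/2}$ and $N_0^{1/2}t$ arise from the other branch $M\ll N_0^{\varepsilon}C^2t^2/N_0$. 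The clean statement (which is what the paper implicitly does, and what you evidently computed when listing all four terms) is simply to retain $M\ll N_0^{\varepsilon}\max\{Q^2t^2/N_0,\,N_0/Q^2\}$ and substitute both branches into \eqref{estimate-1}; the two-sided hypothesis on $Q$ is then used only in the final comparison, where it kills the three non-dominant terms. So the structure and conclusion of your proof are correct, but that intermediate sentence should be deleted or reversed.
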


\subsection{The non-zero frequencies}\label{counting-part}
Recall $\mathbf{\Omega}^{\pm}_{\neq 0}$ denotes the contribution from the terms with $\tilde{n}_2\neq 0$
to $\mathbf{\Omega}^{\pm}(n_1,q_1,r)$ in
\eqref{omega-bound}. Correspondingly, we denote its contribution to \eqref{Cauchy} by $\mathbf{\Sigma}^{\pm}_{\neq 0}$.

Plugging the bound \eqref{character sum: nonzero} into \eqref{omega-bound}, we get
\bna
\mathbf{\Omega}^{\pm}_{\neq 0}&\ll&
\frac{N_1}{n_1^{2}}\sum_{q_2\sim C/q_1}q_2^{-3/2}\sum_{q_2'\sim C/q_1}q_2'^{-3/2}
\sum_{m\sim M}\frac{1}{m^{1/4}}\sum_{m'\sim M}\frac{|\lambda_{f}(m')|^2}{m'^{1/4}}
\mathop{\sum\sum}_{d,d'|q_1}d_1d_1'\\
&&\times\sum_{0\neq \tilde{n}_2\ll N_2}\mathop{\mathop{\sideset{}{^\star}\sum}_{
\substack{\alpha\bmod{q_1r/n_1}\\
n_1\alpha\equiv -m\bmod{d_1}}}\;\mathop{\sideset{}{^\star}\sum}_{\substack{\alpha'\bmod{q_1r/n_1}
\\n_1\alpha'\equiv -m'\bmod{d_1'}}}}_{q_2\bar{\alpha}'-q_2'\bar{\alpha}
\equiv  \tilde{n}_2\bmod{q_1r/n_1}}\; \mathop{\sum\sum}_{\substack{d_2|(q_2,q_2'n_1- m\tilde{n}_2)
\\ d_2'|(q_2',q_2n_1+ m'\tilde{n}_2)}}d_2d_2'\left|\mathfrak{I}^{\pm}\left(\frac{N_1|\tilde{n}_2|}
{q_2q_2'q_1n_1r}\right)\right|,
\ena
where $N_2$ is defined in \eqref{N2 range}.
Writing $q_2d_2$ in place of $q_2$ and $q_2'd_2'$ in place of $q_2'$, and noting that for fixed $(\alpha, d_2, d_2', q_2, q_2', \tilde{n}_2)$ the congruence condition
$d_2q_2\overline{\alpha'}-d_2'q_2'\overline{\alpha}\equiv \tilde{n}_2\bmod{q_1 r/n_1}$
determines $\alpha'$ uniquely, we infer
\bea\label{large-and-small-modulus}
\begin{split}
\mathbf{\Omega}^{\pm}_{\neq 0}\ll& \frac{N_1q_1^3}{n_1^2C^3M^{1/2}}
\mathop{\sideset{}{^\star}{\sum}}_{\substack{\alpha\bmod{q_1r/n_1}}}
\mathop{\sum\sum}_{d_1,d_1'|q_1}\:d_1d_1'
\mathop{\sum}_{\substack {d_2\ll C/q_1\\(d_2,d_1)=1}}
\mathop{\sum}_{\substack {d_2'\ll C/q_1\\(d_2',d_1')=1}}\:d_2d_2'\\
&\qquad\times\mathop{\sum\sum}_{\substack{q_2\sim C/q_1d_2\\q_2'\sim C/q_1d_2'}}
\sum_{0\neq \tilde{n}_2\ll N_2}
\mathop{\sum}_{\substack {m'\sim M\\q_2d_2n_1+m' \tilde{n}_2\equiv 0\bmod d_2'}}|\lambda_{f}(m')|^2\\&\qquad\qquad\qquad
\qquad\qquad
\times
\mathop{\sum}_{\substack {{m\sim M}\\{n_1\alpha \equiv -m\bmod d_1}
\\q_2'd_2'n_1- m \tilde{n}_2\equiv 0\bmod d_2}}\left|\mathfrak{I}^{\pm}\left(\frac{N_1|\tilde{n}_2|}{q_2d_2q_2'd_2'q_1n_1r}\right)\right|.
\end{split}
\eea
We would like to apply the properties in Lemma \ref{integral:lemma} for the integral $\mathfrak{I}^{\pm}(X)$. For this purpose, we split the
modulus $C$ according to $N_0^{1+\varepsilon}/Qt\leq C$ or $C\leq N_0^{1+\varepsilon}/Qt$. We consider the former case first and will treat the later case in Section \ref{The case of small modulus} for which we will apply the simpler bound
\bea\label{J-for-smallC}
\mathfrak{I}^{\pm}(X)\ll  \max\{t,\lambda\}^{-1},
\eea
which follows from \eqref{L square bound} and is valid any $X\geq 0$.

\subsubsection{The case of large modulus}\label{The case of large modulus}
In this subsection, we consider the case
\bea\label{C range-large modulus}
N_0^{1+\varepsilon}/Qt \leq C\ll Q .
\eea
Note that for $C$ in this range and $\Xi\ll N_0^{\varepsilon}$,
we have $\Xi N_0/CQ\ll
N_0^{-\varepsilon} \max\{t,\sqrt{N_0M}/C\}$. From the definition in \eqref{Xi:range},
there is only the term $\mathbf{M}_r^{+}(\Xi)$ to bound, and correspondingly we only need to treat $\mathbf{\Omega}^{+}(n_1,q_1,r)$ in \eqref{omega-bound}.

\begin{lemma}\label{lemma:nonzero}
Assume
\bea\label{assumption: range 2}
Q>\max\{(N_0/t)^{1/2},N_0/t^2\}.
\eea
We have
\bea\label{large bound}
\mathbf{\Sigma}^{+}_{\neq 0}\ll
r^{1/2}N_0^{1/4+\varepsilon}Qt\left(1+\frac{N_0^{5/4}}{Q^{5/2}t}\right).
\eea
\end{lemma}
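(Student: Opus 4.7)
\smallskip
\noindent\textbf{Proof proposal.} The task is to insert estimates into \eqref{large-and-small-modulus} via \eqref{Cauchy}. My first step would be to observe that in the ``large modulus" regime \eqref{C range-large modulus}, we have $M\ll N_0^{\varepsilon}C^2t^2/N_0$, which gives $\sqrt{N_0M}/C\ll t\, N_0^{\varepsilon/2}$, so effectively $\max\{t,\sqrt{N_0M}/C\}\asymp t$. Therefore Lemma \ref{integral:lemma} (2)--(4) become: $\mathfrak{I}^+(X)\ll t^{-1}$ when $X\ll \lambda^{3+\varepsilon}/t^2$; $\mathfrak{I}^+(X)\ll t^{-1}X^{-1/3}$ for $\lambda^{3+\varepsilon}/t^2\ll X\ll \lambda^{2+\varepsilon}/t$; and $\mathfrak{I}^+(X)\ll t^{-1}X^{-1/2}$ for $X\gg \lambda^{2+\varepsilon}/t$, with a negligible contribution beyond $X\gg\lambda^{1+\varepsilon}$.

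Next I would split the $\tilde n_2$-sum in \eqref{large-and-small-modulus} according to the size of $X:=N_1|\tilde n_2|/(q_2d_2q_2'd_2'q_1n_1r)$, applying the appropriate bound above in each segment. For the counting, with $\tilde n_2,q_2,q_2',d_2,d_2'$ fixed, the condition $q_2'd_2'n_1\equiv m\tilde n_2\pmod{d_2}$ together with $n_1\alpha\equiv -m\pmod{d_1}$ restricts $m\sim M$ to $\ll (d_2,\tilde n_2)M/(d_1d_2)+1$ residues, on which one uses $m^{-1/4}\ll M^{-1/4}$; symmetrically, the $m'$-sum, constrained by $q_2d_2n_1\equiv -m'\tilde n_2\pmod{d_2'}$, is handled via the Rankin--Selberg average bound \eqref{GL2: Rankin Selberg}, giving $\sum|\lambda_f(m')|^2\ll (d_2',\tilde n_2)M/d_2'$ up to a constant term. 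The $\alpha$-sum determines $\alpha'$ uniquely via the congruence mod $q_1r/n_1$, contributing $\ll q_1r/n_1$; the divisor sums over $d_1,d_1'|q_1$ and $d_2,d_2'\leq C/q_1$ collapse to $N_0^\varepsilon$-factors after standard manipulations.

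Combining these inputs, the contribution from the ``generic" range $X\asymp \lambda$ (Lemma \ref{integral:lemma} (4)), where $\tilde n_2\sim N_2$ and the square-root cancellation $X^{-1/2}$ is in force, is the dominant term and produces the bound $r^{1/2}N_0^{1/4+\varepsilon}Qt$ after dyadic summation over $M\ll C^2t^2/N_0$ and $N_1\ll N_0^2r/Q^3$. The contributions from the smaller-$X$ ranges (Lemma \ref{integral:lemma} (2) and (3)) are where the hypothesis \eqref{assumption: range 2} enters: the condition $Q>\max\{(N_0/t)^{1/2},N_0/t^2\}$ ensures that these ranges remain non-empty only when they produce at most the secondary term $r^{1/2}N_0^{3/2+\varepsilon}/Q^{3/2}$, matching \eqref{large bound}.

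The main obstacle I anticipate is executing the counting so as not to lose any log-saving factors: specifically, tracking the gcd conditions $(d_2,\tilde n_2)$ and $(d_2',\tilde n_2)$, whose averaging over $\tilde n_2\sim N_2$ must be absorbed into the $N_0^\varepsilon$, and making sure the congruence $n_1\alpha\equiv -m\pmod{d_1}$ interacts correctly with the determination of $\alpha'$ mod $q_1r/n_1$. A secondary delicacy is that we only have \eqref{GL2: Rankin Selberg} on average for $|\lambda_f|^2$ rather than Ramanujan--Petersson, which forces the asymmetric treatment of $m$ versus $m'$ (trivial bound for one factor, $L^2$ for the other). Once these counting bookkeeping issues are handled, assembling the final bound from the three $X$-ranges and plugging into \eqref{Cauchy} should yield \eqref{large bound} cleanly.
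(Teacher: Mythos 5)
Your high-level strategy — splitting the $\tilde n_2$-sum into ranges according to the size of $X=N_1|\tilde n_2|/(q_2d_2q_2'd_2'q_1n_1r)$ and invoking Lemma~\ref{integral:lemma}(2)--(4) on each, then counting the congruence solutions — is exactly the paper's route, and your observation that $\max\{t,\sqrt{N_0M}/C\}\asymp t$ in the regime \eqref{C range-large modulus} is valid up to $N_0^\varepsilon$.

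However, the step where you bound the congruence-constrained $m'$-sum ``via Rankin--Selberg, giving $\sum|\lambda_f(m')|^2\ll (d_2',\tilde n_2)M/d_2'$'' is a genuine gap. The estimate \eqref{GL2: Rankin Selberg} only controls the \emph{full} sum $\sum_{m'\le X}|\lambda_f(m')|^2$, not $L^2$-averages restricted to an arithmetic progression modulo $d_2'$; converting a residue count $\ll (d_2',\tilde n_2)M/d_2'+1$ into a weighted bound would require the pointwise estimate $|\lambda_f(m')|\ll m'^\varepsilon$, which is unavailable for Maass $f$. The paper's essential workaround is a case distinction that you have not included: if $q_2d_2n_1+m'\tilde n_2\ne 0$, one \emph{switches} the orders of the $d_2'$- and $m'$-sums, so the inner $d_2'$-sum counts divisors of the nonzero integer $|q_2d_2n_1+m'\tilde n_2|$ and is $\ll\tau(|q_2d_2n_1+m'\tilde n_2|)=O(N_0^\varepsilon)$, after which Rankin--Selberg is applied to the now unconstrained $m'$-sum; the degenerate case $q_2d_2n_1+m'\tilde n_2=0$ is handled separately by counting factorizations of $|m'\tilde n_2|$ with the ternary divisor function. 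Without this Case~1/Case~2 split and the summation-switching, the $m'$-count does not close for Maass $f$ (for holomorphic $f$ your version would go through).

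Two smaller points. The divisor sums over $d_1,d_1'\mid q_1$ with weights $d_1d_1'$ do not ``collapse to $N_0^\varepsilon$''; they contribute a power of $q_1$ that later cancels against the $q_1^{-3}$ in \eqref{Cauchy}. And the role of \eqref{assumption: range 2} is slightly misstated: it does not decide which $X$-ranges are non-empty, but rather is used at the very end, where $Q>N_0/t^2$ is equivalent to $Q^{1/2}N_0^{1/2}<Qt$ and collapses the assembled bound $r^{1/2}N_0^{1/4+\varepsilon}\left(Q^{1/2}N_0^{1/2}+Qt\right)\left(1+N_0^{5/4}Q^{-5/2}t^{-1}\right)$ to \eqref{large bound}.
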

\begin{proof}For $X=N_1|\tilde{n}_2|/q_2d_2q_2'd_2'q_1n_1r$, in order to apply the bounds for $\mathfrak{I}^{+}(X)$ in Lemma \ref{integral:lemma}, we consider the cases where
$X\ll \lambda^{3+\varepsilon}\max\left\{t,\sqrt{N_0M}/C\right\}^{-2}$,
$\lambda^{3+\varepsilon}\max\left\{t,\sqrt{N_0M}/C\right\}^{-2} \ll X\ll \lambda^{2+\varepsilon}
\max\left\{t,\sqrt{N_0M}/C\right\}^{-1}$,
and $X\gg \lambda^{2+\varepsilon}
\max\left\{t,\sqrt{N_0M}/C\right\}^{-1}$ separately, and we split the sum over $\tilde{n}_2$ accordingly.
Set
\bea\label{N3N4}
N_3:=N^{\varepsilon}\frac{\lambda^3C^2n_1r}{N_1q_1}\max\left\{t,\sqrt{N_0M}/C\right\}^{-2},
\quad
N_4:=N^{\varepsilon}\frac{\lambda^2C^2n_1r}{N_1q_1}\max\left\{t,\sqrt{N_0M}/C\right\}^{-1}.
\eea
Then
\bea\label{Omega-nonzero-split}
\mathbf{\Omega}^{+}_{\neq 0}\ll
\mathbf{\Omega}_{\neq 0}^{a}+\mathbf{\Omega}_{\neq 0}^{b}+\mathbf{\Omega}_{\neq 0}^{c},
\eea
where $\mathbf{\Omega}_{\neq 0}^{a}$ is the contribution from
$0\neq \tilde{n}_2\ll N_3$ for which we will use the bound
$\mathfrak{I}^{+}(X)\ll \max \left\{t,\sqrt{N_0M}/C\right\}^{-1}$ in Lemma \ref{integral:lemma}
(2); $\mathbf{\Omega}_{\neq 0}^{b}$ is the contribution from
$N_3\ll \tilde{n}_2\ll N_4$ for which we use the bound
$\mathfrak{I}^{+}(X)\ll \max \left\{t,\sqrt{N_0M}/C\right\}^{-1}|X|^{-1/3}$
in Lemma \ref{integral:lemma}
(3), and $\mathbf{\Omega}_{\neq 0}^{c}$ is the remaining part for
which we apply the bound
$\mathfrak{I}^{+}(X)\ll \max \left\{t,\sqrt{N_0M}/C\right\}^{-1}|X|^{-1/2}$ in Lemma \ref{integral:lemma}
(4).

Notice that for fixed tuple $(n_1,\alpha,\tilde{n}_2)$,
the congruences
$$
\left\{\begin{array}{l}
n_1\alpha \equiv -m\bmod d_1\\
   q_2'd_2'n_1- m \tilde{n}_2\equiv 0\bmod d_2
\end{array}
\right.
$$
imply that $m$ is uniquely determined modulo
$d_1d_2/(d_2,\tilde{n}_2)$. Therefore the number of $m$ is dominated by
$O\left((d_2,\tilde{n}_2)\left(1+M/d_1d_2\right)\right)$. We conclude that
\bna
\mathbf{\Omega}_{\neq 0}^{a}&\ll&\frac{N_1 q_1^4r}{n_1^3 C^3 M^{1/2}}
\mathrm{max}\left\{t, \frac{\sqrt{N_0M}}{C}\right\}^{-1}
  \mathop{\sum\sum}_{d_1, d_1'|q_1}\:d_1d_1'
  \mathop{\sum}_{d_2\ll C/q_1\atop (d_2, d_1)=1}\:
  \mathop{\sum}_{d_2'\ll C/q_1\atop (d_2', d_1')=1}\:d_2d_2'\\
&&\times \mathop{\sum\sum}_{\substack{q_2\sim C/q_1d_2\\q_2'\sim C/q_1d_2'}}
\mathop{\sum}_{0\neq \tilde{n}_2\ll N_3\atop (d_2,\tilde{n}_2)|q_2'd_2'n_1}\:(d_2,\tilde{n}_2)
  \left(1+\frac{M}{d_1d_2}\right)\mathop{\sum}_{\substack {{m'\sim M}
\\q_2d_2n_1+m' \tilde{n}_2\equiv 0\bmod d_2'}}|\lambda_{f}(m')|^2.
\ena
We would now like to count the sums over $(d_2',m')$ using an argument as in \cite[Sec. 6.1]{LMS}.
We first fix the parameters $(d_2,q_2,n_1,\tilde{n}_2)$.

\textit{Case 1}. If $q_2d_2n_1+ m' \tilde{n}_2\equiv 0\bmod d_2'$ but $q_2d_2n_1+ m' \tilde{n}_2\neq 0$, then
$d_2'$ is a factor of the integer $q_2d_2n_1+ m' \tilde{n}_2$. Therefore by
switching the order of summation, the $d_2'$-sum is bounded above
by $\tau(|q_2d_2n_1+ m' \tilde{n}_2|)=O(N^\varepsilon)$ with $\tau(n)$ being the divisor function, and we get
\bea\label{Omega-nonzero-1-1}
\mathbf{\Omega}_{\neq 0}^{a}
&\ll&\frac{N_1 q_1^3r}{n_1^3 C^2 M^{1/2}} \mathrm{max}\left\{t, \frac{\sqrt{N_0M}}{C}\right\}^{-1}
  \mathop{\sum\sum}_{d_1, d_1'|q_1}\:d_1d_1'
  \mathop{\sum}_{d_2\ll C/q_1\atop (d_2, d_1)=1}\:d_2
  \mathop{\sum}_{q_2\sim C/q_1d_2}\:\nonumber\\
&&\times \mathop{\sum}_{0\neq \tilde{n}_2\ll N_3}\:(d_2,\tilde{n}_2)
  \left(1+\frac{M}{d_1d_2}\right)  \mathop{\sum}_{d_2'\ll C/q_1\atop (d_2', d_1')=1}
  \mathop{\sum}_{\substack {{m'\sim M}
\\q_2d_2n_1+m' \tilde{n}_2\equiv 0\bmod d_2'}}|\lambda_{f}(m')|^2\nonumber\\
  &\ll&\frac{N^{\varepsilon}N_1 q_1^2r}{n_1^3 C M^{1/2}}
  \mathrm{max}\left\{t, \frac{\sqrt{N_0M}}{C}\right\}^{-1}
  \mathop{\sum\sum}_{d_1, d_1'|q_1}\:d_1d_1'
    \mathop{\sum}_{d_2\ll C/q_1\atop (d_2, d_1)=1}\: N_3 \left(1+\frac{M}{d_1d_2}\right) M\:\nonumber\\
    &\ll&\frac{N^{\varepsilon}N_3N_1 q_1^3r}{n_1^3 C }
     \mathrm{max}\left\{t, \frac{\sqrt{N_0M}}{C}\right\}^{-1}(C+M)M^{1/2}.
     \eea
Here we have applied the Rankin--Selberg estimate \eqref{GL2: Rankin Selberg}.

\textit{Case 2}. If on the other hand, $q_2d_2n_1+ m' \tilde{n}_2=0$ identically,
then as long as $m'$ and $\tilde{n}_2$ are fixed, the number of tuples $(q_2,d_2,n_1)$
is bounded above by the ternary divisor function $\tau_3(|m' \tilde{n}_2|)$. Hence such a contribution is dominated by
\bea\label{Omega-nonzero-1-2}
&&\frac{N^{\varepsilon}N_1 q_1^4r}{n_1^3 C^3 M^{1/2}}
\mathrm{max}\left\{t, \frac{\sqrt{N_0M}}{C}\right\}^{-1}
  \mathop{\sum\sum}_{d_1, d_1'|q_1}\:d_1d_1'
  \mathop{\sum}_{d_2'\ll C/q_1\atop (d_2', d_1')=1}d_2'
  \mathop{\sum}_{q_2'\sim C/q_1d_2'}
  \mathop{\sum}_{0\neq \tilde{n}_2\ll N_3}\:\nonumber\\
&&\qquad\times
  \sum_{m'\sim M}|\lambda_{f}(m')|^2  \mathop{\sum}_{d_2\ll C/q_1
  \atop (d_2,\tilde{n}_2)|q_2'd_2'n_1}(d_2,\tilde{n}_2)
   \left(d_2+\frac{M}{d_1}\right)
  \mathop{\sum}_{q_2\sim C/q_1d_2\atop q_2d_2n_1+ m' \tilde{n}_2=0}1\:\nonumber\\
&\ll&\frac{N^{\varepsilon}N_1 q_1^4r}{n_1^3 C^3 M^{1/2}}
\mathrm{max}\left\{t, \frac{\sqrt{N_0M}}{C}\right\}^{-1}
  \mathop{\sum\sum}_{d_1, d_1'|q_1}\:d_1d_1'
  \mathop{\sum}_{d_2'\ll C/q_1\atop (d_2', d_1')=1}d_2'
  \mathop{\sum}_{q_2'\sim C/q_1d_2'}\nonumber\\
&&\qquad\times \sum_{\ell|q_2'd_2'n_1}\ell
\mathop{\sum}_{0\neq n_2^*\ll N_3/\ell}\;\sum_{m'\sim M}|\lambda_{f}(m')|^2
\mathop{\sum}_{d_2\ll C/q_1\atop d_2n_1|m'n_2^*\ell}\:
\:\left(d_2+\frac{M}{d_1}\right)  \:\nonumber\\
&\ll&\frac{N^{\varepsilon}N_1 q_1^2r}{n_1^3 C M^{1/2}}
\mathrm{max}\left\{t, \frac{\sqrt{N_0M}}{C}\right\}^{-1}
  \mathop{\sum\sum}_{d_1, d_1'|q_1}\:d_1d_1'
N_3M
  \left(\frac{C}{q_1}+\frac{M}{d_1}\right) \:\nonumber\\
&\ll&\frac{N^{\varepsilon}N_3N_1 q_1^3r}{n_1^3 C }
\mathrm{max}\left\{t, \frac{\sqrt{N_0M}}{C}\right\}^{-1}
\left(C+M\right) M^{1/2}.
\eea

As for $\mathbf{\Omega}_{\neq 0}^{b}$, similarly we have
\bna
\mathbf{\Omega}_{\neq 0}^{b}&\ll&\frac{N_1 q_1^4r}{n_1^3 C^3 M^{1/2}}
\mathrm{max}\left\{t, \frac{\sqrt{N_0M}}{C}\right\}^{-1}
  \mathop{\sum\sum}_{d_1, d_1'|q_1}\:d_1d_1'
  \mathop{\sum}_{d_2\ll C/q_1\atop (d_2, d_1)=1}\:
  \mathop{\sum}_{d_2'\ll C/q_1\atop (d_2', d_1')=1}\:d_2d_2'
\\
&&\quad \times\mathop{\sum\sum}_{\substack{q_2\sim C/q_1d_2\\q_2'\sim C/q_1d_2'}}
\mathop{\sum}_{N_3\ll \tilde{n}_2\ll N_4 \atop (d_2,\tilde{n}_2)|q_2'd_2'n_1}\:
\left(\frac{N_3\mathrm{max}\left\{t, \sqrt{N_0M}/C\right\}^2}
{\lambda^3|\tilde{n}_2|} \right)^{1/3}
\\
&&\qquad \quad \times(d_2,\tilde{n}_2)
  \left(1+\frac{M}{d_1d_2}\right)\mathop{\sum}_{\substack {{m'\sim M}
\\q_2d_2n_1+m' \tilde{n}_2\equiv 0\bmod d_2'}}|\lambda_{f}(m')|^2.
\ena

As in the above, we consider two cases.

\textit{Case 1}. If $q_2d_2n_1+ m' \tilde{n}_2\equiv 0\bmod d_2'$ but $q_2d_2n_1+ m' \tilde{n}_2\neq 0$, then we have
\bea\label{Omega-nonzero-2-1}
\mathbf{\Omega}_{\neq 0}^{b}
&\ll&\frac{N_1 q_1^3r}{n_1^3 C^2 M^{1/2}\lambda} \mathrm{max}\left\{t, \frac{\sqrt{N_0M}}{C}\right\}^{-1/3}
  \mathop{\sum\sum}_{d_1, d_1'|q_1}\:d_1d_1'
  \mathop{\sum}_{d_2\ll C/q_1\atop (d_2, d_1)=1}\:d_2
  \mathop{\sum}_{q_2\sim C/q_1d_2}\:\nonumber\\
&&\times \mathop{\sum}_{N_3\ll \tilde{n}_2\ll N_4}\:\left(\frac{N_3}{|\tilde{n}_2|}\right)^{1/3}\:(d_2,\tilde{n}_2)
  \left(1+\frac{M}{d_1d_2}\right)  \mathop{\sum}_{d_2'\ll C/q_1\atop (d_2', d_1')=1}
  \mathop{\sum}_{\substack {{m'\sim M}
\\q_2d_2n_1+m' \tilde{n}_2\equiv 0\bmod d_2'}}|\lambda_{f}(m')|^2\nonumber\\
  &\ll&\frac{N^{\varepsilon}N_1 q_1^2r}{n_1^3 C M^{1/2}\lambda}
  \mathrm{max}\left\{t, \frac{\sqrt{N_0M}}{C}\right\}^{-1/3}
  \mathop{\sum\sum}_{d_1, d_1'|q_1}\:d_1d_1'
    \mathop{\sum}_{d_2\ll C/q_1\atop (d_2, d_1)=1}\:
   N_4^{2/3}N_3^{1/3} \left(1+\frac{M}{d_1d_2}\right) M\:\nonumber\\
       &\ll&\frac{N^{\varepsilon}N_4^{2/3}N_3^{1/3}N_1 q_1^3r}{n_1^3 C \lambda}
     \mathrm{max}\left\{t, \frac{\sqrt{N_0M}}{C}\right\}^{-1/3}(C+M)M^{1/2}.
     \eea

 \textit{Case 2}. If $q_2d_2n_1+ m' \tilde{n}_2=0$ identically, then as before,
such a contribution is also dominated by \eqref{Omega-nonzero-2-1}.

For $\mathbf{\Omega}_{\neq 0}^{c}$, we discuss analogously as in the above and obtain
\bea\label{Omega-nonzero-2-2}
\mathbf{\Omega}_{\neq 0}^{c}&\ll&\frac{N_1 q_1^4r}{n_1^3 C^3 M^{1/2}}
\mathrm{max}\left\{t, \frac{\sqrt{N_0M}}{C}\right\}^{-1}
  \mathop{\sum\sum}_{d_1, d_1'|q_1}\:d_1d_1'
  \mathop{\sum}_{d_2\ll C/q_1\atop (d_2, d_1)=1}\:
  \mathop{\sum}_{d_2'\ll C/q_1\atop (d_2', d_1')=1}\:d_2d_2'
\nonumber\\
&&\quad \times\mathop{\sum\sum}_{\substack{q_2\sim C/q_1d_2\\q_2'\sim C/q_1d_2'}}
\mathop{\sum}_{N_4\ll \tilde{n}_2\ll N_2 \atop (d_2,\tilde{n}_2)|q_2'd_2'n_1}\:
\left(\frac{N_3\mathrm{max}\left\{t, \sqrt{N_0M}/C\right\}^2}
{\lambda^3|\tilde{n}_2|} \right)^{1/2}
\nonumber\\
&&\qquad \quad \times(d_2,\tilde{n}_2)
  \left(1+\frac{M}{d_1d_2}\right)\mathop{\sum}_{\substack {{m'\sim M}
\\q_2d_2n_1+m' \tilde{n}_2\equiv 0\bmod d_2'}}|\lambda_{f}(m')|^2\nonumber\\
&\ll&\frac{N^{\varepsilon}N_3^{1/2}N_2^{1/2}N_1 q_1^3r}{n_1^3 C \lambda^{3/2}}(C+M)M^{1/2}.
\eea

Recall  $N_2=N^{\varepsilon}\lambda  C^2n_1r/N_1q_1$ in \eqref{N2 range}, and
$N_3=N^{\varepsilon}\max\left\{t,\sqrt{N_0M}/C\right\}^{-2}\lambda^3C^2n_1r/N_1q_1$,
$N_4=N^{\varepsilon}\max\left\{t,\sqrt{N_0M}/C\right\}^{-1}\lambda^2C^2n_1r/N_1q_1$ in \eqref{N3N4},
with $\lambda=\left(N_0N_1/C^3r\right)^{1/3}$.
By comparing, the bound in \eqref{Omega-nonzero-2-1}
is dominated by that in \eqref{Omega-nonzero-1-2} if $\lambda\geq \max\left\{t,\sqrt{N_0M}/C\right\}^{4/5}$ and by the one in \eqref{Omega-nonzero-2-2} if
$\lambda\leq \max\left\{t,\sqrt{N_0M}/C\right\}^{4/5}$.

Consequently,
from \eqref{Omega-nonzero-1-1}-\eqref{Omega-nonzero-2-2}, we conclude that
\bea\label{compare}
\mathbf{\Omega}^{+}_{\neq 0}
  &\ll&\frac{N^{\varepsilon}N_1 q_1^3r}{n_1^3 C}\left(
    N_3^{1/2}N_2^{1/2}\lambda^{-3/2}+ N_3\max\left\{t,\sqrt{N_0M}/C\right\}^{-1}
     \right)(C+M)M^{1/2}\nonumber\\
&\ll& \frac{N^{\varepsilon}r^{11/6}N_1^{1/6}N_0^{1/6} q_1^2 C^{1/2}}{n_1^2 t}
  \left(1+\frac{1}{t^2}\left(\frac{N_0N_1}{C^3r}\right)^{5/6}\right)(C+M)M^{1/2}.
\eea
Since $1\ll M\ll N^{\varepsilon}\max\left\{C^2t^2/N_0,N_0/Q^2\right\}= N^{\varepsilon}C^2t^2/N_0$
in \eqref{M-N1-range} as $C$ satisfies \eqref{C range-large modulus}, this bound when substituted in place of $\mathbf{\Omega}(n_1,q_1,r)$ in \eqref{Cauchy}
gives that
\bna
\mathbf{\Sigma}^{+}_{\neq 0}
&\ll&\frac{N_0^{5/12+\varepsilon}N_1^{1/6}}{r^{2/3}}
\left(1+\frac{1}{t}\left(\frac{N_0N_1}{C^3r}\right)^{5/12}\right)
r^{11/12} N_1^{1/12}N_0^{1/12}C^{1/4}t^{-1/2}  (C+M)^{1/2}M^{1/4} \\
&\ll&r^{1/4}N_0^{1/2+\varepsilon}N_1^{1/4}C^{1/4}(C+M)^{1/2}M^{1/4}
 t^{-1/2}\left(1+\frac{1}{t}\left(\frac{N_0N_1}{C^3r}\right)^{5/12}\right)\\
&\ll& r^{1/4}N_0^{1/4+\varepsilon}N_1^{1/4}\left(C^{5/4}+\frac{C^{7/4}t}{N_0^{1/2}}\right)
 \left(1+\frac{1}{t}\left(\frac{N_0N_1}{C^3r}\right)^{5/12}\right).
\ena
Recall $1\ll N_1\ll N_0^{2+\varepsilon}r/Q^3$ in \eqref{M-N1-range} and $C\ll Q$, we further imply
\bna
\mathbf{\Sigma}^{+}_{\neq 0}&\ll&\frac{r^{1/2}N_0^{1/4+\varepsilon}}{Q^{3/4}}
\left(C^{5/4}N_0^{1/2}+C^{7/4}t\right)
  \left(1+\frac{1}{t}\left(\frac{N_0}{CQ}\right)^{5/4}\right)\\
  &\ll&r^{1/2}N_0^{1/4+\varepsilon}\left(Q^{1/2}N_0^{1/2}+Qt\right)
  \left(1+\frac{1}{t}\left(\frac{N_0}{Q^2}\right)^{5/4}\right).
\ena
If we assume $Q>\max\{(N_0/t)^{1/2},N_0/t^2\}$, then
\bna
\mathbf{\Sigma}^{+}_{\neq 0}\ll r^{\frac{1}{2}}N_0^{1/4+\varepsilon}Qt
\left(1+\frac{N_0^{5/4}}{Q^{5/2}t}\right).
\ena
The lemma follows.
\end{proof}

\begin{remark}\label{second-derivative-0-large}
If $(\varphi(x^3))''=0$, Lemma 4.3 holds with the factor $\max\left\{t,\sqrt{N_0M}/C\right\}$
replaced by $\sqrt{N_0M}/C$. By going through the proof, a careful reader will see that the same estimates for $\mathbf{\Sigma}^{\pm}_0$ and $\mathbf{\Sigma}^{+}_{\neq 0}$
in Lemma \ref{lemma:zero} and Lemma \ref{lemma:nonzero} are still valid.
\end{remark}

\subsubsection{The case of small modulus}\label{The case of small modulus}
In this subsection, we deal with the case where $C$ is such that
\bna
1\ll C\leq N_0^{1+\varepsilon}/Qt,
\ena
and accordingly we bound the $\mathbf{\Omega}^{\pm}_{\neq 0}$ in \eqref{large-and-small-modulus}.


The proof for this case is similar to what we have done in
Section \ref{The case of large modulus}, except that we will only use
the bound \eqref{J-for-smallC} for $\mathfrak{I}^{\pm}(X)$ in place of the
statements (2), (3) and (4) of
Lemma \ref{integral:lemma}, and we will be brief.

Recall from \eqref{M-N1-range} $M$ satisfies
$1\ll M\ll N_0^{\varepsilon}\max\left\{C^2t^2/N_0,N_0/Q^2\right\}$, then in this case we have
\bna
1\ll M\ll N_0^{1+\varepsilon}/Q^2.
\ena

By replicating the proof of Lemma \ref{lemma:nonzero} and
using \eqref{J-for-smallC}, one finds that (in comparison with \eqref{compare})
\bna
 \mathbf{\Omega}^{\pm}_{\neq 0}&\ll&\frac{N^{\varepsilon}N_1 q_1^3r}{n_1^3 C }
     \left(N_2\mathrm{max}\left\{t, \lambda\right\}^{-1}\right)(C+M)M^{1/2}\\
 &\ll&  \frac{N^{\varepsilon} r^{5/3}N_1^{1/3}N_0^{1/3}q_1^2}
 {n_1^2\mathrm{max}\left\{t, \lambda\right\}}(C+M)M^{1/2}.
\ena
For $M\ll N_0^{1+\varepsilon}/Q^2$ and $C\ll N_0^{1+\varepsilon}Q^{-1}t^{-1}$, the contribution of $ \mathbf{\Omega}^{\pm}_{\neq 0}$ to \eqref{Cauchy} is bounded as follows
\bna
\mathbf{\Sigma}^{\pm}_{\neq 0}
&\ll&\frac{N_0^{5/12+\varepsilon}N_1^{1/6}}{r^{2/3}t^{1/2}}
r^{5/6} N_1^{1/6}N_0^{1/6}(C^{1/2}+M^{1/2})M^{1/4} \\
&\ll& r^{1/6}N_0^{7/12+\varepsilon}N_1^{1/3}
\left(\frac{N_0^{1/2}}{Q^{1/2}t}+\frac{N_0^{1/2}}{Qt^{1/2}}\right)
 \frac{N_0^{1/4}}{Q^{1/2}}\\
&\ll&r^{1/2}N_0^{1/4+\varepsilon}Qt
\left(\frac{N_0^{7/4}}{Q^{3}t^2}+\frac{N_0^{7/4}}{Q^{7/2}t^{3/2}}\right).
\ena
Under the assumption \eqref{assumption: range 2}, this is dominated by the bound in
\eqref{large bound}.

%

\subsection{Completion of the proof}
For the $\mathbf{M}_r^{\pm}(\Xi)$ in \eqref{Xi:range}, by inserting \eqref{Sigma-0-bound} and \eqref{large bound} into \eqref{Cauchy}, we have shown the following
\bna
\mathbf{M}_r^{\pm}(\Xi)\ll \frac{r^{1/2}N_0^{3/2+\varepsilon}}{Q^{3/2}}+
r^{\frac{1}{2}}N_0^{1/4+\varepsilon}Qt
\ena
under the assumption
\bea
\label{final-assumption-Q}
\max\{(N_0/t)^{1/2},N_0/t^2\}<Q<\min\{N_0^{1/2-\varepsilon},(N_0/t)^{2/3}\}
\eea
which is a combination of \eqref{assumption 1}, \eqref{assumption: range 1} and
\eqref{assumption: range 2}. We set $Q=N_0^{1/2}/t^{2/5}$ to balance the contribution. Then
\bna
\mathbf{M}_r^{\pm}(\Xi)\ll r^{1/2}t^{3/5}N_0^{3/4+\varepsilon}
\ena
provided $t^{8/5}<N_0<t^{16/5}$.

With the choice $Q=N_0^{1/2}/t^{2/5}$, substituting the bound above for $\mathbf{M}_r^{\pm}(\Xi)$ and
\eqref{small part-1} into \eqref{M-N1-range}, we obtain
\bna
\begin{split}
 \mathscr{S}_r(N_0)\ll&\, r^{1/2}t^{3/5}N_0^{3/4+\varepsilon}
+r^{1/2}t^{-9/10}N_0^{5/4+\varepsilon}
\end{split}\ena
for the sum $\mathscr{S}_r(N)$ defined in \eqref{aim-sum}, provided $t^{8/5}<N_0<t^{16/5}$.

%


By replacing $N_0$ with $N/r^2$ and summing over $1\leq r\leq t^{\gamma}$ (with $\gamma<4/5$), we get
\bna
\begin{split}
\mathscr{S}(N) \ll&
\sum_{1\leq r\leq t^{\gamma}}\mathscr{S}_r(N/r^2)
+ t^{-9\gamma/14} N^{1+\varepsilon}\\
\ll&\, t^{3/5}N^{3/4+\varepsilon}+t^{-9/10}N^{5/4+\varepsilon}+ t^{-9\gamma/14} N^{1+\varepsilon}
\end{split}\ena
provided $t^{8/5+2\gamma}<N<t^{16/5}$. Here the error term $O(t^{-9\gamma/14} N^{1+\varepsilon})$ was estimated in \eqref{error}. We also notice that the first factor $t^{3/5}N^{3/4+\varepsilon}$ is better than the trivial bound $\mathscr{S}(N)=O(N)$ if $t^{12/5+\varepsilon}<N$.

Notice that by \eqref{first-derivative-varphi}, \eqref{varphi condition}, Remark \ref{second-derivative-0} and Remark
\ref{second-derivative-0-large} (which take care of the case where $(\varphi(x^3))''=0$), $\varphi(x)$ satisfies $\varphi'(x)\cdot(\varphi(x^3))''\leq 0$, while in using Lemma \ref{integral:lemma} (3), we further assume $\varphi(x)$
is of the form $\varphi(x)=c\log x$ or $cx^{\beta}$ ($\beta\neq 1/2$, $c\in \mathbb{R}\backslash \{0\}$); see \eqref{phi assumption}. These imply that the function $\varphi(x)$ satisfies $\varphi(x)=c\log x$ or $cx^{\beta}$ with $0<\beta\leq 1/3$. The assumption $\triangle<t^{1/2-\varepsilon}$ arises also in applying Lemma \ref{integral:lemma} (3); see \eqref{assumption-on-Delta}.
This completes the proof of Theorem \ref{main-theorem}.

\section{Proofs of Lemma \ref{lem: character sum} and Lemma \ref{integral:lemma}}\label{proofs-of-technical-lemmas}
\subsection{Proof of Lemma \ref{lem: character sum}}
The estimate has in fact been carried out by Munshi in \cite{Mun6}, and we will follow his treatment closely.
\begin{proof}
Using the expression $\sum_{d|(n,q)}d\mu(q/d)$ for the Ramanujan sum $S(n,0;q)$, the character sum in
\eqref{character sum} is
\bna
\mathfrak{C}_r(n_1,n_2,m,q)
=\sum_{d | q }d\mu\left(\frac{q}{d}\right)
\sideset{}{^{\star}}\sum_{\alpha\bmod qr/n_1\atop n_1\alpha \equiv -m \bmod d}
e\left(\frac{ n_2\overline{\alpha}}{qr/n_1}\right).
\ena
Then we have
\bea\label{K-expression}
\mathfrak{K}(\tilde{n}_2)
=\sum_{d|q_1q_2}d\mu\left(\frac{q_1q_2}{d}\right)
\sum_{d'|q_1q_2'}
d'\mu\left(\frac{q_1q_2'}{d'}\right)
\mathop{\sideset{}{^\star}\sum_{\alpha\, \mathrm{mod} \,q_1q_2r/n_1 \atop n_1\alpha\equiv -m\bmod d}\;
\sideset{}{^\star}\sum_{\alpha'\, \mathrm{mod} \,q_1q_2'r/n_1 \atop n_1\alpha'\equiv -m'\bmod d'}}
_{q_2\overline{\alpha'}-q_2'\overline{\alpha}\equiv \tilde{n}_2 \bmod q_2q_2'q_1r/n_1}1.
\eea
Let $d=d_1d_2,d_1|q_1,d_2|q_2$ and $d'=d_1'd_2',d_1'|q_1,d_2'|q_2.$
We then factor $\mathfrak{K}_r(\tilde{n}_2)$ as
\bna
\mathfrak{K}(\tilde{n}_2)= \mathfrak{K}_1(\tilde{n}_2)\mathfrak{K}_2(\tilde{n}_2),
\ena
where
\bna
\mathfrak{K}_1(\tilde{n}_2)
&=&\sum_{d_1|q_1}\sum_{d_1'|q_1}d_1d_1'\mu\left(\frac{q_1}{d_1}\right)
\mu\left(\frac{q_1}{d_1'}\right)
\mathop{\mathop{\sideset{}{^\star}\sum}_{\substack{\alpha_1\bmod{q_1r/n_1}\\
n_1\alpha_1\equiv -m\bmod{d_1}}}\;\mathop{\sideset{}{^\star}\sum}_{\substack{\alpha_1'\bmod{q_1r/n_1}\\
n_1\alpha_1'\equiv -m'\bmod{d_1'}}}}_{q_2\bar{\alpha}'_1-q_2'\bar{\alpha}_1 \equiv  \tilde{n}_2\bmod{q_1r/n_1}}\; 1,\\
\mathfrak{K}_2(\tilde{n}_2)&=&\sum_{d_2|q_2}\sum_{d_2'|q_2'}d_2d_2'\mu\left(\frac{q_2}{d_2}\right)\mu\left(\frac{q_2'}{d_2'}\right)
\mathop{\mathop{\sideset{}{^\star}\sum}_{\substack{\alpha_2\bmod{q_2}\\
n_1\alpha_2\equiv -m\bmod{d_2}}}\;\mathop{\sideset{}{^\star}\sum}_{\substack{\alpha_2'\bmod{q_2'}\\
n_1\alpha_2'\equiv -m'\bmod{d_2'}}}}_{q_2\bar{\alpha}'_2-q_2'\bar{\alpha}_2 \equiv
 \tilde{n}_2\bmod{q_2q_2'}}\; 1.
\ena
For $\mathfrak{K}_2(\tilde{n}_2)$, since $(n_1,q_2q_2')=1$, we get
$\alpha\equiv -m\bar{n}_1\bmod{d_2}$ and $\alpha'\equiv -m'\bar{n}_1\bmod{d_2'}$.
Thus $d_2'|q_2n_1+m'\tilde{n}_2$ and $d_2|q_2'n_1- m\tilde{n}_2$. We are able to conclude that
\bna
\mathfrak{K}_2(\tilde{n}_2)\ll \mathop{\sum\sum}_{\substack{d_2|(q_2,q_2'n_1- m\tilde{n}_2)\\
d_2'|(q_2',q_2n_1+m'\tilde{n}_2)}}d_2d_2'.
\ena

For $\tilde{n}_2=0$, the claim follows from the congruence conditions in \eqref{K-expression}.
This completes the proof of the lemma.
\end{proof}

\subsection{Proof of Lemma \ref{integral:lemma}}
Recall from \eqref{I-integral},
\bna
\mathfrak{I}^{\pm}(X)=\int_{\mathbb{R}}
\phi\left(\xi\right)
\mathfrak{J}_r^{\pm}\left(N_1\xi,m,q_1q_2\right)
\overline{\mathfrak{J}_r^{\pm}\left(N_1\xi,m',q_1q_2'\right)}
\, e\left(-X\xi\right)\mathrm{d}\xi,
\ena
where the $\mathfrak{J}_r^{\pm}(N_1\xi,m,q)$, by making a change of variable $y=z^3$ in \eqref{J-definition}, is given by
\bea\label{J-integral}
\mathfrak{J}_r^{\pm}(N_1\xi,m,q)=3\int_0^\infty \widetilde{V}(z^3)z^2
e\left(t\varphi(z^3)\pm \frac{2}{q}(N_0m)^{1/2}z^{3/2}
+\frac{3}{q}\left(\frac{N_0N_1\xi}{r}\right)^{1/3}z\right)
\mathrm{d}z.
\eea
Before finer analysis for the integral $\mathfrak{I}^{\pm}(X)$, we record a direct estimate for $\mathfrak{J}_r^{+}(N_1\xi,m,q)$. Under the assumption \eqref{varphi condition}, the second derivative of the phase function in $\mathfrak{J}_r^{+}(N_1\xi,m,q)$
satisfies
\bna
t\left(\varphi(z^3)\right)''
+\frac{3}{2q}(N_0m)^{1/2}z^{-1/2}\gg \max\{t,\sqrt{N_0M}/C\}.
\ena
Then, by the second derivative test in Lemma \ref{lem: 2st derivative test, dim 1}, we have
\bea\label{J:second derivative estimate}
\mathfrak{J}_r^+(N_1\xi,m,q)\ll  \max\{t,\sqrt{N_0M}/C\}^{-1/2}.
\eea

To estimate the integral $\mathfrak{I}^{\pm}(X)$, we first prove a preliminary result.
Recall $\lambda:=\left(N_0N_1/C^3r\right)^{1/3}$ (see \eqref{lambda}).
Note that with our assumption on $N_1$ in \eqref{generic-N1}, $\lambda\gg N_0^\varepsilon$. Let $\phi\in  \mathcal{C}_c^{\infty}(2/3,3)$ be a smooth function.
\begin{lemma}\label{middle integral:lemma}
For $X,Y$ real with $|Y|\ll 1$, define
\bna
H\left(X,3\lambda Y\right)=\int_{\mathbb{R}} \xi^2\phi\left(\xi^3\right) e\left(-X\xi^3+3\lambda Y\xi\right)\mathrm{d}\xi.
\ena

(1) We have $H(X,3\lambda Y)=O\left(N_0^{-A}\right)$ if $X\gg \lambda^{1+\varepsilon}$.

(2) For $X\gg N_0^\varepsilon$, we have $H(X,3\lambda Y)=O\left(N_0^{-A}\right)$
unless $\sqrt[3]{4/9}-1/6<\lambda Y/X<\sqrt[3]{9}+1/6$.
For $1/3<\lambda Y/X<3$, if we let $\nu=2X\left(\lambda Y/X\right)^{3/2}$,
then
\bna
H(X,3\lambda Y)=|X|^{-1/2}e(\nu)H_1(\nu)+O_A\left(N_0^{-A}\right)
\ena
for any $A>0$, with $H_1(\nu)\in C_c^{\infty}(X/3,6X)$ satisfying
\bna
\nu^jH_1^{(j)}(\nu)\ll 1.
\ena

(3) $H(0,3\lambda Y)=\phi_0(-3\lambda Y)$ for some Schwartz function $\phi_0$.
\end{lemma}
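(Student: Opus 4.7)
The plan is to analyze the phase function $\Psi(\xi) = -X\xi^3 + 3\lambda Y \xi$ of the oscillatory integral, locate its stationary points, and apply either repeated integration by parts (non-stationary case) or the stationary phase method of Lemma~\ref{lemma:exponentialintegral} (stationary case). On the support of the amplitude $\xi \mapsto \xi^2\phi(\xi^3)$ one has $\xi^3 \in (2/3, 3)$, hence $\xi \asymp 1$, which simplifies all derivative estimates.

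For Part~(1), I would compute $\Psi'(\xi) = -3X\xi^2 + 3\lambda Y$. Since $|Y|\ll 1$ gives $|3\lambda Y| \ll \lambda$, the hypothesis $X\gg \lambda^{1+\varepsilon}$ together with $\xi^2 \asymp 1$ forces $|\Psi'(\xi)|\asymp X$ uniformly on the support, while $\Psi^{(j)}(\xi) \ll X$ for all $j\geq 2$. Repeated integration by parts (equivalently, Lemma~\ref{lemma:exponentialintegral}~(1) with effective parameters $X=Z=1$, $Y=R=X$) then yields $H(X,3\lambda Y)\ll_A N_0^{-A}$.

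For Part~(2), the critical equation $\Psi'(\xi)=0$ has the solution $\xi_0 = (\lambda Y/X)^{1/2}$. On the support one has $\xi^2 \in (\sqrt[3]{4/9}, \sqrt[3]{9})$, so a stationary point in the support requires $\lambda Y/X$ to lie in (a slight enlargement of) this interval. When $\lambda Y/X$ falls outside $(\sqrt[3]{4/9} - 1/6, \sqrt[3]{9} + 1/6)$, one has $|\Psi'(\xi)| \gg X$ uniformly on the support, and $X \gg N_0^\varepsilon$ again gives $H(X,3\lambda Y)\ll_A N_0^{-A}$ by integration by parts. For the main range $1/3 < \lambda Y/X < 3$, the critical point satisfies $\xi_0 \asymp 1$, and $\Psi''(\xi_0) = -6X\xi_0 \asymp -X$. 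A direct computation gives
\bna
\Psi(\xi_0) \;=\; -X\xi_0^3 + 3\lambda Y\xi_0 \;=\; -X\Bigl(\tfrac{\lambda Y}{X}\Bigr)^{3/2} + 3X\Bigl(\tfrac{\lambda Y}{X}\Bigr)^{3/2} \;=\; 2X\Bigl(\tfrac{\lambda Y}{X}\Bigr)^{3/2} \;=\; \nu.
\ena
Applying Lemma~\ref{lemma:exponentialintegral}~(2) yields the asymptotic expansion
\bna
H(X, 3\lambda Y) \;=\; \frac{\xi_0^2\,\phi(\xi_0^3)}{\sqrt{-\Psi''(\xi_0)}}\, e\!\left(\nu - \tfrac18\right) \;+\; \text{lower-order terms} \;+\; O_A(N_0^{-A}),
\ena
and repackaging the leading and subleading contributions via the inverse substitution $\lambda Y/X = (\nu/(2X))^{2/3}$ produces the claimed shape $|X|^{-1/2} e(\nu)\, H_1(\nu)$. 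Since $d\nu/d(\lambda Y/X) = 3X(\lambda Y/X)^{1/2}\asymp X \asymp \nu$ on this range, the chain rule converts each $\nu$-derivative into an $X^{-1}$, yielding the inert bounds $\nu^j H_1^{(j)}(\nu)\ll 1$. The image of the support of $\phi$ under the map $\lambda Y/X \mapsto \nu$ determines the compact support interval of $H_1$.

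For Part~(3), setting $X=0$ collapses the integral to
\bna
H(0, 3\lambda Y) \;=\; \int_{\mathbb{R}} \xi^2\phi(\xi^3)\, e(3\lambda Y\,\xi)\,\mathrm{d}\xi \;=\; \widehat{g}(-3\lambda Y),
\ena
where $g(\xi):=\xi^2\phi(\xi^3)$ is smooth and compactly supported, hence $\widehat{g}\in \mathcal{S}(\mathbb{R})$, and one may set $\phi_0:=\widehat{g}$. The main obstacle will be the clean packaging in Part~(2) of the leading and all subleading stationary phase correction terms into a single amplitude $H_1(\nu)$ with the advertised inert derivative bounds; this reduces to bookkeeping via the change of variables together with the observation $d\nu/d(\lambda Y/X) \asymp X$.
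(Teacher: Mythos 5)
Your proof is correct and follows essentially the same route as the paper: locate the stationary point $\xi_0=(\lambda Y/X)^{1/2}$ of the phase, use integration by parts (Lemma~\ref{lem: upper bound} or equivalently Lemma~\ref{lemma:exponentialintegral}~(1)) when $|\Psi'|\gg X$, apply the stationary phase Lemma~\ref{lemma:exponentialintegral}~(2) in the remaining range, and repackage via $\nu=2X(\lambda Y/X)^{3/2}$. You even add useful detail the paper leaves implicit (the computation $\Psi(\xi_0)=\nu$ and the chain-rule observation $d\nu/d(\lambda Y/X)\asymp X\asymp\nu$ that yields the inert bounds $\nu^jH_1^{(j)}(\nu)\ll 1$).
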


\begin{proof}
(1) Let $h(\xi)=-X\xi^3+3\lambda Y\xi$. Then $h'(\xi)=-3X\xi^2+3\lambda Y$ and $h''(\xi)=-6X\xi$.
If $X\gg\lambda^{1+\varepsilon}$, we have
\bna
h'(\xi)\gg\lambda^{1+\varepsilon}\gg N_0^\varepsilon.
\ena
Thus we have $H(X,3\lambda Y)\ll N_0^{-A}$ for $X\gg\lambda^{1+\varepsilon}$,
by applying Lemma \ref{lem: upper bound}.

(2) For $X\gg N_0^\varepsilon$, we have
$|-3X\xi^2+3\lambda Y|\geq X/2$
if $\lambda Y/X\leq \xi^2-1/6$
or $\lambda Y/X\geq \xi^2+1/6$. Since $\phi\in  \mathcal{C}_c^{\infty}(2/3,3)$, $\xi^3\in[2/3,3]$.
Thus we have $H(X,3\lambda Y)\ll N_0^{-A}$ unless $\sqrt[3]{4/9}-1/6<\lambda Y/X<\sqrt[3]{9}+1/6$.

For $1/3<\lambda Y/X<3$, by the stationary phase analysis in Lemma \ref{lemma:exponentialintegral} (2),
we have
\bna
H\left(X,\lambda Y\right)=\frac{e\left(2X(\lambda Y/X)^{3/2}\right)}
{\sqrt{-12\pi (\lambda XY)^{1/2}}}
\mathcal{F}\left(\sqrt{\frac{\lambda Y}{X}}\right)+O_{A}(N_0^{-A})
\ena
for some $1$-inert function $\mathcal{F}(y)$ supported on $y\asymp 1$.
Set  $\nu=2X\left(\lambda Y/X\right)^{3/2}$ and
$$
H_1(\nu)=\frac{\sqrt{|X|}}
{\sqrt{-12\pi X(\nu/2X)^{1/3}}}
\mathcal{F}\left(\left(\frac{\nu}{2X}\right)^{1/3}\right).
$$
Then (2) follows.

(3) We have
\bna
H(0,3\lambda Y)=\int_{\mathbb{R}}\xi^2\phi\left(\xi^3\right) e(3\lambda Y\xi)\mathrm{d}\xi
\ena
which is the Fourier transform of $\xi^2\phi\left(\xi^3\right)$ at $-3\lambda Y$, so (3) is clear.
\end{proof}
Now we are ready to prove Lemma \ref{integral:lemma}.

Before proceeding, we recall $q, q_1q_2, q_1q'_2\asymp C$.
\subsection*{Proof of Lemma \ref{integral:lemma}}
(1) By \eqref{J-integral} and \eqref{I-integral}, we have
\bea\label{I integral-median}
\mathfrak{I}^{\pm}(X)
&=&9\int_\mathbb{R}\int_\mathbb{R}\widetilde{V}(z_1^3)
\widetilde{V}(z_2^3)z_1^2z_2^2
\int_{\mathbb{R}} \phi\left(\xi\right) e\left(3\left(\frac{N_0N_1}{r}\right)^{1/3}
\left(\frac{z_1}{q_1q_2}-\frac{z_2}{q_1q_2'}\right)\xi^{1/3}-X\xi\right)\mathrm{d}\xi\nonumber\\
   && \quad
   \times e\left(t\left(\varphi(z_1^3)-\varphi(z_2^3)\right)\pm 2N_0^{1/2}
      \left(\frac{m^{1/2}z_1^{3/2}}{q_1q_2}-
      \frac{m'^{1/2}z_2^{3/2}}{q_1q_2'}\right)\right)
   \mathrm{d}z_1\mathrm{d}z_2\nonumber\\
   &=&27\int_\mathbb{R}\int_\mathbb{R}\widetilde{V}(z_1^3)
\widetilde{V}(z_2^3)z_1^2z_2^2
H\left(X, 3\lambda\left(\frac{Cz_1}{q_1q_2}-\frac{Cz_2}{q_1q_2'}\right)\right)\nonumber\\
   && \quad
   \times e\left(t\left(\varphi(z_1^3)-\varphi(z_2^3)\right)\pm 2N_0^{1/2}
      \left(\frac{m^{1/2}z_1^{3/2}}{q_1q_2}-
      \frac{m'^{1/2}z_2^{3/2}}{q_1q_2'}\right)\right)
   \mathrm{d}z_1\mathrm{d}z_2.
\eea
Thus the statement in (1) is clear in view of Lemma \ref{middle integral:lemma} (1).

(2) For $X\ll \lambda^{3+\varepsilon}\max\left\{t,\sqrt{N_0M}/C\right\}^{-2}$, the bound follows by plugging the second derivative bound \eqref{J:second derivative estimate}
into \eqref{I-integral}.

(3) In this case, we derive similarly as in Munshi \cite{Mun6}. From the definition \eqref{J-definition},
\bna
\mathfrak{J}_r^+(N_1\xi,m,q)
=\int_0^\infty \widetilde{V}(y)
e\left(t\varphi(y)+2Dy^{1/2}
+3By^{1/3}\right)
\mathrm{d}y,
\ena
where $\xi\in [2/3,3]$,
\bea\label{AB}
D=\frac{1}{q}(N_0m)^{1/2}\asymp \sqrt{N_0M}/C, \qquad
B=\frac{1}{q}\left(\frac{N_0N_1\xi}{r}\right)^{1/3}\asymp \lambda.
\eea
Recall the range of $N_1$ in \eqref{M-N1-range}: $1\ll N_1\ll N_0^{2+\varepsilon}r/Q^3$
and $q\sim C$. For $C$ in \eqref{C range-large modulus}, we have
\bea\label{B upper bound}
B\ll \frac{N_0^{1+\varepsilon}}{CQ}\ll N_0^{-\varepsilon}t.
\eea
Therefore, the integral $\mathfrak{J}_r^+(N_1\xi,m,q)$ is negligibly small unless
$D\asymp t$.

Assume
\bea\label{phi assumption}
\varphi'(y)=-cy^{-\beta} \qquad \text{with}\quad \beta>0, \quad \beta\neq 1/2,
\eea
where $c>0$ is an absolute constant, that is,
\bea\label{phi assumption-2}
\varphi(y)=-c\log y+c_0\qquad \text{or}\qquad \varphi(y)=-\frac{c}{1-\beta}y^{1-\beta}+c_0
\quad \text{with}\; \beta\neq 1/2, 1,
\eea
where $c_0\in \mathbb{R}$ is an absolute constant. Without loss of generality, we further assume
$c_0=0$.
Let $g(y)=t\varphi(y)+2Dy^{1/2}+3By^{1/3}$.
Then
\bna
g'(y)&=&-cty^{-\beta}+Dy^{-1/2}+By^{-2/3},\\
g''(y)&=&c\beta ty^{-\beta-1}-\frac{D}{2}y^{-3/2}-\frac{2B}{3}y^{-5/3},\\
g^{(j)}(y)&=&t\varphi^{(j)}(y)+2D\cdot \frac{1}{2}\cdots\left(\frac{3}{2}-j\right)
y^{\frac{1}{2}-j}
+3B\cdot \frac{1}{3}\cdots\left(\frac{4}{3}-j\right)y^{\frac{1}{3}-j}.
\ena
Denote
\bna
C_{\alpha}^j=\frac{\alpha(\alpha-1)\cdot\cdot\cdot (\alpha-j+1)}{j!}.
\ena
By an iterative argument, the stationary point $y_*$ which is the solution to
the equation $g'(y)=-cty^{-\beta}+Dy^{-1/2}+By^{-2/3}=0$ can be written as
\bea\label{stationary point}
y_*&=&\left(\frac{ct}{D}\right)^{\frac{2}{2\beta-1}}\left(1-\frac{B}{ct}
y_*^{\beta-2/3}\right)^{\frac{2}{2\beta-1}}\nonumber\\
&=&\left(\frac{ct}{D}\right)^{\frac{2}{2\beta-1}}+
\left(\frac{ct}{D}\right)^{\frac{2}{2\beta-1}}  \bigg( \sum_{j=1}^{K_1}
C_{2/(2\beta-1)}^{j}\left(\frac{-B}{ct}\right)^jy_*^{\left(\beta-2/3\right)j}
+O_{c,\beta,K_1}\left(\frac{B^{K_1+1}}{t^{K_1+1}}\right) \bigg)  \nonumber\\
&=&y_0\left(1+\sum_{j=1}^{K_2}\frac{y_{j}}{y_0}
+O_{c,\beta,K_2}\left(\frac{B^{K_2+1}}{t^{K_2+1}}\right)\right),
\eea
where here and after, $K_j\geq 1$, $j=1,2,3\ldots,$ denote integers, and
\bna
y_0&=&\left(\frac{ct}{D}\right)^{\frac{2}{2\beta-1}} \asymp 1,\\
y_1&=&C_{2/(2\beta-1)}^{1}y_0^{\beta+1/3}
\left(\frac{-B}{ct}\right)\asymp \frac{B}{t},\\
y_{j}&=&f_{\beta,j}\left(y_0\right)\left(\frac{-B}{ct}\right)^{j}
\asymp \left(\frac{B}{t}\right)^{j}
\ena
for some function $f_{\beta,j}(x)$ of polynomially growth, depending only on
$\beta,j$, and supported on $x\asymp 1$. By \eqref{B upper bound}, the
$O$-term in \eqref{stationary point} is $O(N_0^{-\varepsilon K_2 })$, which
can be arbitrarily small by taking $K_2$ sufficiently large.

Note that $g''(y)\asymp\max\{t,\sqrt{N_0M}/C\}$ and $g^{(j)}(y)\ll \max\{t,\sqrt{N_0M}/C\}$
for $j\geq 1$. Recall $\widetilde{V}^{(j)}(y)\ll_j \triangle^j$, where $\triangle<t^{1-\varepsilon}$ (see \eqref{derivative-of-V} and also one line after \eqref{J-definition}). To make sure that the stationary phase analysis is applicable to the integral $\mathfrak{J}_r^+(N_1\xi,m,q)$, we assume $\triangle$ satisfies
\bea\label{assumption-on-Delta}
\triangle<t^{1/2-\varepsilon}
\eea
Now applying Lemma \ref{lemma:exponentialintegral} with $Z=1$, $X=\triangle$,
$Y=\max\{t,\sqrt{N_0M}/C\}$ and $R=Y/X^2\gg t^{\varepsilon}$, we have
\bna
\mathfrak{J}_r^+(N_1\xi,m,q)
=\frac{e(g(y_*))}{\sqrt{2\pi g''(y_*)}}
  \mathcal{G}_A(y_*) + O_{A}(  t^{-A}),
\ena
for any $A>0$, where $\mathcal{G}_A(y)$ is some inert function
supported on $y\asymp 1$. From \eqref{phi assumption-2} and \eqref{stationary point} and using Taylor series approximation, we have
\bna
g(y_*)&=&t\varphi(y_*)+2Dy_*^{1/2}+3By_*^{1/3}\\
&=&t\varphi(y_0)+2Dy_0^{1/2}+B\sum_{j=0}^{K_3}g_{c,\beta,j}\left(y_0\right)
\left(\frac{B}{t}\right)^j+O_{c,\beta,K_3}\left(\frac{B^{K_3+2}}{t^{K_3+1}}\right)
\ena
and
\bna
g''(y_*)&=&c\beta ty_*^{-\beta-1}-\frac{D}{2}y_*^{-3/2}-\frac{2B}{3}y_*^{-5/3}\\
&=&c\beta ty_0^{-\beta-1}-\frac{D}{2}y_0^{-3/2}+
B\sum_{j=0}^{K_4}h_{c,\beta,j}\left(y_0\right)
\left(\frac{B}{t}\right)^j+O_{c,\beta,K_3}\left(\frac{B^{K_4+2}}{t^{K_4+1}}\right)
\ena
for some functions $g_{c,\beta, j}(x)$, $h_{c,\beta, j}(x)$ of polynomially growth,
depending only on $c,\beta,j$, and supported on $x\asymp 1$. In particular,
$g_{c,\beta, 0}(x)=3x^{1/3}$,
$h_{c,\beta, 0}(x)=\left(\frac{2\beta(\beta+1)}{2\beta-1}-\frac{2}{3}\right)x^{-\frac{5}{3}}$.
Note that $g''(y_*)\asymp t$ since $Ay_0^{-1/2}=cty_0^{-\beta}$ and $\beta \neq 1/2$.
Hence, by taking $K_3$ sufficiently large,
\bea\label{J-stationary phase-2}
\mathfrak{J}_r^+(N_1\xi,m,q)
&=&\frac{1}{\sqrt{t}}\mathcal{G}_{\natural}(y_*)
e\left(t\varphi(y_0)+2Dy_0^{1/2}\right)\nonumber\\
&&\times e\left(B\sum_{j=0}^{K_3}g_{c,\beta,j}\left(y_0\right)
\left(\frac{B}{t}\right)^j\right) + O_{A}(  t^{-A}),
\eea
where $ \mathcal{G}_{\natural}(y)=\left(t/(2\pi g''(y))\right)^{1/2}
  \mathcal{G}_A(y)$ satisfies $\mathcal{G}_{\natural}^{(j)}(y)\ll_j 1$.

Plugging \eqref{J-stationary phase-2} into \eqref{I-integral}, we obtain
\bna
&&\mathfrak{I}^+(X)=\frac{1}{t}e\left(t\varphi(y_0)+2Dy_0^{1/2}-t\varphi(y_0')-2D'y_0'^{1/2}\right)
\int_{\mathbb{R}}
\phi\left(\xi\right)\mathcal{G}_{\natural}(y_*)\overline{\mathcal{G}_{\natural}(y_*')}\\
&&\times e\left(-X\xi+\xi^{1/3}\widetilde{B}\sum_{j=0}^{K_3}g_{c,\beta,j}(y_0)
\left(\frac{\widetilde{B}}{t}\right)^j\xi^{j/3}
-\xi^{1/3}\widetilde{B'}\sum_{\jmath=0}^{K_3}g_{c,\beta,\jmath}(y_0')
\left(\frac{\widetilde{B'}}{t}\right)^{\jmath}\xi^{\jmath/3}\right)
\mathrm{d}\xi + O_{A}( t^{-A}),
\ena
where $\widetilde{B}=B\xi^{-1/3}=\left(N_0N_1/(q_1^3q_2^3r)\right)^{1/3}\asymp \lambda$, $D$ is defined in \eqref{AB}, $y_0, y_*$ are as in \eqref{stationary point},
and $D', \widetilde{B}', y_0', y_*'$ are defined in the same way but with $m$, $q_1q_2$ replaced by $m'$, $q_1q_2'$
respectively. Making a change of variable $\xi\rightarrow \xi^3$, the phase function of the
exponential function in the above integral equals
\bna
-X\xi^3+\widetilde{B}\sum_{j=0}^{K_3}g_{c,\beta,j}(y_0)
\left(\frac{\widetilde{B}}{t}\right)^j\xi^{j+1}
-\widetilde{B'}\sum_{\jmath=0}^{K_3}g_{c,\beta,\jmath}(y_0')
\left(\frac{\widetilde{B'}}{t}\right)^{\jmath}\xi^{\jmath+1},
\ena
whose third derivative
\bna
\begin{split}
=&-6X+\widetilde{B}\sum_{j\geq 2}^{K_3}g_{c,\beta,j}(y_0)
\left(\frac{\widetilde{B}}{t}\right)^j(j+1)j(j-1)\xi^{j-2}
-\widetilde{B'}\sum_{\jmath\geq 2}^{K_3}g_{c,\beta,\jmath}(y_0')
\left(\frac{\widetilde{B'}}{t}\right)^{\jmath}(\jmath+1)\jmath(\jmath-1)\xi^{\jmath-2}\\
\gg&\, X+O\left(\frac{\widetilde{B}^3}{t^2}+\frac{\widetilde{B'}^3}{t^2}\right).
\end{split}
\ena
In particular, this is $\gg X$ if we assume $X\gg \lambda^{3+\varepsilon}\max\{t, \sqrt{N_0M}/C\}^{-2}$, since $\widetilde{B}, \widetilde{B'}\asymp \lambda$
and $t\asymp D\asymp \sqrt{N_0M}/C$.
Now by applying the third derivative test
in Lemma \ref{lem: 2st derivative test, dim 1}, we infer that
\bna
\mathfrak{I}^+(X)\ll \max\{t, \sqrt{N_0M}/C\}^{-1} |X|^{-1/3}.
\ena
This proves (3).

(4)
By \eqref{I integral-median} and Lemma \ref{middle integral:lemma} (2), we may write the integral $\mathfrak{I}^+(X)$
in \eqref{I-integral} as
\bna
\mathfrak{I}^+(X)=\frac{27}{\sqrt{|X|}}
\mathop{\int_1^2\int_1^2}_{\frac{Cz_1}{q_1q_2}-\frac{Cz_2}{q_1q_2'}\asymp \frac{X}{\lambda}}
w(z_1,z_2)e\left(f(z_1,z_2)\right)\mathrm{d}z_1\mathrm{d}z_2+O(N_0^{-A}),
\ena
where
\bna
w(z_1,z_2)=\widetilde{V}(z_1^3)\widetilde{V}(z_2^3)z_1^2z_2^2
H_{\natural}\left(\frac{\lambda}{X}\left(\frac{Cz_1}{q_1q_2}-\frac{Cz_2}{q_1q_2'}\right)\right)
\ena
and
\bna
f(z_1,z_2)=t\left(\varphi(z_1^3)-\varphi(z_2^3)\right)+
     2N_0^{1/2}\left(\frac{m^{1/2}z_1^{3/2}}{q_1q_2}-
      \frac{m'^{1/2}z_2^{3/2}}{q_1q_2'}\right)
      +2X\left(\frac{\lambda}{X}
    \left(\frac{Cz_1}{q_1q_2}-\frac{Cz_2}{q_1q_2'}\right)\right)^{3/2}
\ena
with $H_{\natural}(y)=H_1(2Xy^{3/2})H_2(y)$ satisfying
$H_{\natural}^{(j)}(y)\ll_j 1$. Here $H_2(y)$ is a
smooth function supported in $[1/3,3]$, $H_2(y)=1$ on $[\sqrt[3]{4/9}-1/6,\sqrt[3]{9}+1/6]$
and $H_2^{(j)}(y)\ll_j 1$.

By Fourier inversion, we write
\bna
H_{\natural}(y)=\int_\mathbb{R} \widehat{H_{\natural}}(z)e(zy)\mathrm{d}z,
\ena
where $\widehat{H_{\natural}}$ is the Fourier transform of $H_{\natural}$,
satisfying $\widehat{H_{\natural}}(z)\ll_A (1+|z|)^{-A}$.
Thus we may further write
\bna
\mathfrak{I}^+(X)=\frac{27}{\sqrt{|X|}}\int_{-N^\varepsilon}^{N^\varepsilon}
\widehat{H_{\natural}}(z)
\mathop{\int_1^2\int_1^2}_{\frac{Cz_1}{q_1q_2}-\frac{Cz_2}{q_1q_2'}\asymp \frac{X}{\lambda}}
     \widetilde{V}(z_1^3)
\widetilde{V}(z_2^3)z_1^2z_2^2
     e\left(f(z_1,z_2; z)\right)\mathrm{d}z_1\mathrm{d}z_2\mathrm{d}z+O\left(N_0^{-A}\right),
\ena
where
\bna
f(z_1,z_2; z)=f(z_1,z_2)
      +\frac{\lambda z}{X}\left(\frac{Cz_1}{q_1q_2}-\frac{Cz_2}{q_1q_2'}\right).
\ena

Note that
\bna
\frac{\partial f(z_1,z_2; z)}{\partial z_1}
     &=&t\left(\varphi(z_1^3)\right)'+\frac{3N_0^{1/2}
     m^{1/2}z_1^{1/2}}{q_1q_2}+\frac{3\lambda C}{q_1q_2}\left(\frac{\lambda}{X}
    \left(\frac{Cz_1}{q_1q_2}-\frac{Cz_2}{q_1q_2'}\right)\right)^{1/2}
     +\frac{\lambda zC}{q_1q_2X},\\
\frac{\partial f(z_1,z_2; z)}{\partial z_2}
     &=&-t\left(\varphi(z_2^3)\right)'-\frac{N_0^{1/2}m'^{1/2}z_2^{1/2}}{q_1q_2'}
     -\frac{3\lambda C}{q_1q_2'}\left(\frac{\lambda}{X}
    \left(\frac{Cz_1}{q_1q_2}-\frac{Cz_2}{q_1q_2'}\right)\right)^{1/2}
     -\frac{\lambda zC}{q_1q_2'X}.
\ena
Assume $X\gg \lambda^{2+\varepsilon}\max\left\{t,\sqrt{N_0M}/C\right\}^{-1}$.
By the assumption in \eqref{varphi condition}, one has
\bna
\frac{\partial^2 f(z_1,z_2; z)}{\partial z_1^2}
     &=&t\left(\varphi(z_1^3)\right)''+\frac{3N_0^{1/2}m^{1/2}}{2q_1q_2z_1^{1/2}}
     +\frac{3\lambda C^2}{2q_1^2q_2^2}\left(\frac{\lambda}{X}\right)^{1/2}
    \left(\frac{Cz_1}{q_1q_2}-\frac{Cz_2}{q_1q_2'}\right)^{-1/2}\\
     &&\gg \max\left\{t,\sqrt{N_0M}/C\right\},\\
\frac{\partial^2 f(z_1,z_2; z)}{\partial z_2^2}
     &=&-t\left(\varphi(z_2^3)\right)''-\frac{3N_0^{1/2}m'^{1/2}}{2q_1q_2'z_2^{1/2}}
     +\frac{3\lambda C^2}{2q_1^2q_2'^2}\left(\frac{\lambda}{X}\right)^{1/2}
    \left(\frac{Cz_1}{q_1q_2}-\frac{Cz_2}{q_1q_2'}\right)^{-1/2}\\
     &&\gg \max\left\{t,\sqrt{N_0M}/C\right\},\\
\frac{\partial^2 f(z_1,z_2; z)}{\partial z_1\partial z_2}&=& -\frac{3\lambda C^2}{2q_1^2q_2q_2'}\left(\frac{\lambda}{X}\right)^{1/2}
    \left(\frac{Cz_1}{q_1q_2}-\frac{Cz_2}{q_1q_2'}\right)^{-1/2}
    \asymp \frac{\lambda^2}{X}.
\ena
This implies that
\bna
\left|\mathrm{det}f''\right|=\left|\frac{\partial^2 f}{\partial z_1^2}
\frac{\partial^2 f}{\partial z_2^2}
   -\left(\frac{\partial^2 f}{\partial z_1\partial z_2}\right)^2\right|
   \gg \left(\max\left\{t,\sqrt{N_0M}/C\right\}\right)^2,
\ena
for $1\leq z_1, z_2\leq 2$, $\frac{Cz_1}{q_1q_2}-\frac{Cz_2}{q_1q_2'}\asymp \frac{X}{\lambda}$ and $|z|\leq N^\varepsilon$.

By applying the two dimensional second derivative test in
Lemma \ref{lem: 2nd derivative test, dim 2} with
$\rho_1=\rho_2=\max\{t,\sqrt{N_0M}/C\}$,
we have
\bna
\mathfrak{I}^+(X)\ll\max\{t,\sqrt{N_0M}/C\}^{-1}|X|^{-\frac{1}{2}}.
\ena
This completes the proof of (4).

(5) For $X=0$, from \eqref{L square bound}, we have
\bna
\mathfrak{I}^{\pm}(0)\ll t^{-1}.
\ena
To prove the second estimate, from Lemma \ref{middle integral:lemma} (3) we have
\bna
H(0,3\lambda Y)=\int_{\mathbb{R}}\xi^2\phi(\xi^3)e(3\lambda Y\xi)\mathrm{d}\xi
=\phi_0(-3\lambda Y)
\ena
for some Schwartz function $\phi_0$.
For $q_2=q_2'$, to simplify notation we write $q_1q_2=q$. Then
\bna
\mathfrak{I}^{\pm}(0)&=&12\int_{\mathbb{R}}\int_{\mathbb{R}}
\widetilde{V}(z_1^2)
\widetilde{V}(z_2^2)z_1z_2
\phi_0\left(\frac{-3\lambda C(z_1^{2/3}-z_2^{2/3})}{q}\right)\\&&\quad\times
e\left(t\left(\varphi(z_1^2)-\varphi(z_2^2)\right)\pm\frac{2N_0^{1/2}}{q}
\left(m^{1/2}z_1-m'^{1/2}z_2\right)\right)
\mathrm{d}z_1\mathrm{d}z_2;
\ena
see \eqref{I integral-median}.
We make a change of variable $z_1\rightarrow \tau+z_2$.
Then $\mathrm{d}\tau=\mathrm{d}z_1$. From the rapid decay of the Schwartz function $\phi_0(x)$ when $|x|\gg \lambda^{\varepsilon}$, we know $|\tau|\leq \lambda^{-1+\varepsilon}$.
Thus,
\bna
\mathfrak{I}^{\pm}(0)
:=\int_{-\lambda^{-1+\varepsilon}}^{\lambda^{-1+\varepsilon}}
\int_1^2V_0(\tau,z_2)
e\left(f_0(\tau,z_2)\right)\mathrm{d}z_2\mathrm{d}\tau+O_{A}(N_0^{-A})
\ena
for any $A>0$, where
$$V_0(\tau,z_2)=12\widetilde{V}\left((\tau+z_2)^2\right)\widetilde{V}(z_2^2)
(\tau+z_2)^2z_2^2\, \phi_0\left(\frac{-3\lambda C}{q}\left((\tau+z_2)^{2/3}-z_2^{2/3}\right)\right)$$ satisfying
\bna
\mathrm{Var}(V_0(\tau,\cdot))=
\int_1^2\left|\frac{\partial V_0(\tau,z_2)}{\partial z_2}\right|\mathrm{d}z_2\ll
\lambda^{\varepsilon}
\ena
and
\bna
f_0(\tau,z_2):=t\left(\varphi((\tau+z_2)^2)-\varphi(z_2^2)\right)\pm
\frac{2N_0^{1/2}}{q}\left(m^{1/2}
(\tau+z_2)-m'^{1/2}z_2\right).
\ena
For $|\tau|<\lambda^{-1+\varepsilon}$, we have (for some $\varsigma$ with
$z_2\leq \varsigma\leq \tau+z_2$)
\bna
\frac{\partial f_0(\tau,z_2)}{\partial z_2}&=&t\tau
\left.\left(\varphi(z^2)\right)''\right|_{z=\varsigma}\pm\frac{2N_0^{1/2}}{q}\left(m^{1/2}
-m'^{1/2}\right)\\
&=&\pm\frac{2N_0^{1/2}}{q}
\left(m^{1/2}-m'^{1/2}\right)
+O\left(\lambda^{-1+\varepsilon}t\right)
\ena
and (for some $z_2\leq \varsigma'\leq \tau+z_2$)
\bna
\frac{\partial^2f_0(\tau,z_2)}{\partial z_2^2}=
t\tau \left.\left(\varphi(z^2)\right)'''\right|_{z=\varsigma'}\ll
\lambda^{-1+\varepsilon}t.
\ena

Now we assume $\left|m^{\frac{1}{2}}-m'^{\frac{1}{2}}\right|
\geq C\lambda^{-1}N_0^{-1/2+\varepsilon} t$ (as otherwise $\mathfrak{I}^{\pm}(0)\ll N_0^{\varepsilon}t^{-1}$).
By using integration by parts once, one knows that
\bna
\int_1^2V_0(\tau,z_2)e(f_0(\tau,z_2))\mathrm{d}z_2
\ll CN_0^{-1/2+\varepsilon}\big|m^{1/2}-m'^{1/2}\big|^{-1}.
\ena
Therefore,
\bna
\mathfrak{I}^{\pm}(0)\ll C\lambda^{-1}N_0^{-1/2+\varepsilon}
\big|m^{1/2}-m'^{1/2}\big|^{-1}.
\ena


\section{Proofs of the corollaries}\label{proofs-of-corollaries}
In this section, we prove Corollaries \ref{sharp-cut-sum}, \ref{subconvexity}, \ref{sum-Fourier-coefficients} in Section \ref{introduction}.
\subsection{Proof of Corollary \ref{sharp-cut-sum}}
Note that from \eqref{GL3-Rankin--Selberg}, we have
\bna
\sum_{N<r^2n\leq N+N/\triangle}|\lambda_\pi(n,r)|^2\ll N/\triangle+N^{4/5}.
\ena
In particular, if $\triangle\leq N^{1/5}$, one has
\bna
\sum_{N<r^2n\leq N+N/\triangle}|\lambda_\pi(n,r)|^2\ll N/\triangle.
\ena
Similarly, under the same assumption $\triangle\leq N^{1/5}$
and from \eqref{GL2: Rankin Selberg}, one obtains
\bna
\sum_{N<n\leq N+N/\triangle}|\lambda_f(n)|^2\ll N/\triangle.
\ena

We choose the smooth function $V$ in \eqref{aim-sum} to be supported on
$[1,2]$ and $V(x)=1$ on $[1+1/\triangle,2-1/\triangle]$. Then, Theorem \ref{main-theorem} yields
\bna
\begin{split}
\sum_{N\leq r^2n\leq 2N}\lambda_{\pi}(n,r)
\lambda_f(n)e\left(t\varphi\bigg(\frac{r^2n}{N}\bigg)\right)\ll&\,
t^{3/5}N^{3/4+\varepsilon}+t^{-9/10}N^{5/4+\varepsilon}+t^{-9\gamma/14}N^{1+\varepsilon}\\
&+
\sum_{N<r^2n\leq N+N/\triangle}|\lambda_\pi(n,r)\lambda_f(n)|+\sum_{2N-N/\triangle<r^2n\leq 2N}|\lambda_\pi(n,r)\lambda_f(n)|\\
\ll&\, t^{3/5}N^{3/4+\varepsilon}+t^{-9/10}N^{5/4+\varepsilon}+t^{-9\gamma/14}N^{1+\varepsilon}\\
&+\bigg(\sum_{N<r^2n\leq N+N/\triangle}|\lambda_\pi(n,r)|^2\bigg)^{1/2}\bigg(\sum_{N<r^2n\leq N+N/\triangle}|\lambda_f(n)|^2\bigg)^{1/2}\\
\ll&\, t^{3/5}N^{3/4+\varepsilon}+t^{-9/10}N^{5/4+\varepsilon}+t^{-9\gamma/14}N^{1+\varepsilon}+N/\triangle,
\end{split}
\ena
as long as $\triangle\leq N^{1/5}$ and $\max\{t^{8/5+2\gamma},t^{12/5+\varepsilon}\}<N<t^{16/5}$.
Corollary \ref{sharp-cut-sum}
then follows by choosing $\triangle=t^{1/2-\varepsilon}$ and
by noting that $t^{1/2-\varepsilon}\leq N^{1/5}$
if and only if $t^{5/2-\varepsilon}\leq N$.

\subsection{Proof of Corollary \ref{subconvexity}}

Applying a standard approximate functional equation argument (see \cite[Theorem 5.3]{IK}), we have
\bea\label{condition1}
L\left(\frac{1}{2}+it,\pi\otimes f\right)\ll_{\pi,f,\varepsilon} t^{\varepsilon}
\sup_{t^{3-\eta}\leq N\leq t^{3+\varepsilon}}
\frac{\left|\mathscr{F}(N)\right|}{\sqrt{N}}+t^{(3-\eta)/2},
\eea
where $\eta>0$ is some parameter to be chosen later, and
\bna
\mathscr{F}(N):=\sum_{r,n\geq 1}\lambda_{\pi}(n,r)\lambda_{f}(n)
\bigg(\frac{r^2n}{N}\bigg)^{-it}V\bigg(\frac{r^2n}{N}\bigg)
\ena
for some smooth function $V$ supported in $[1,2]$ and satisfying $V^{(j)}(x)\ll_j 1$.
Applying Theorem \ref{main-theorem} with $\varphi(x)=-(\log x)/2\pi$ we get
\bea\label{condition2}
\frac{\mathscr{F}(N)}{\sqrt{N}} \ll_{\pi,f,\varepsilon} t^{3/5}N^{1/4+\varepsilon}
\eea
as long as $t^{3-3/10}\leq N\leq t^{3+\varepsilon}$ (we choose $\gamma=11/20$ in Theorem \ref{main-theorem}).
By \eqref{condition1} and \eqref{condition2}, one has
\bna
L\left(\frac{1}{2}+it,\pi\otimes f\right)\ll_{\pi,f,\varepsilon}
t^{3/2-3/20+\varepsilon}+t^{(3-\eta)/2}
\ena
for $\eta\leq 3/10$. Hence we have proved Corollary~\ref{subconvexity} by taking $\eta=3/10$.

\subsection{Proof of Corollary \ref{sum-Fourier-coefficients}}
We assume $N<t^3$, then Corollary \ref{sharp-cut-sum} is simplified to
\bna
\sum_{N\leq r^2n\leq 2N}\lambda_{\pi}(n,r)
\lambda_f(n)e\left(t\varphi\bigg(\frac{r^2n}{N}\bigg)\right)
\ll_{\pi,f,\varepsilon} t^{3/5}N^{3/4+\varepsilon}+t^{-9\gamma/14}N^{1+\varepsilon}
\ena
provided $\max\{t^{8/5+2\gamma}, t^{5/2-\varepsilon}\}<N<t^{3}$.

Specifying to $\varphi(n)=\pm 6n^{1/6}$ (with $t=(xN)^{1/6}$), this implies
$$
\sum_{N\leq r^2n\leq 2N}\lambda_\pi(n,r)\lambda_f(n)e(\pm 6(r^2nx)^{1/6})
\ll x^{1/10}N^{17/20+\varepsilon}+x^{-3\gamma/28}N^{1-3\gamma/28+\varepsilon}
$$
for $\max\{x^{(4+5\gamma)/(11-5\gamma)}, x^{5/7-\varepsilon}\}<N<x$, which simplifies to
$x^{5/7-\varepsilon}<N<x$ if we assume $\gamma<9/20$, for instance.
Consequently,
\bna
\begin{split}
B_6(x,N)=&\sum_{r^2n\leq N}\lambda_\pi(n,r)\lambda_f(n)(r^2n)^{-7/12}\cos\left(12\pi(r^2nx)^{1/6}\right)\\
\ll&\, x^{1/10}N^{4/15+\varepsilon}+x^{-3\gamma/28}N^{5/12-3\gamma/28+\varepsilon}.
\end{split}
\ena
Substituting this into \eqref{FI-Functional-eq} yields
$$\sum_{r^2n\leq x}\lambda_\pi(r,n)\lambda_f(n)\ll x^{31/60}N^{4/15+\varepsilon}
+(xN)^{5/12-3\gamma/28+\varepsilon}
+x^{5/6}N^{-1/6+\varepsilon},$$
which upon choosing $N=x^{19/26}$ to balance the first and third factors implies
\bna
\begin{split}
\sum_{r^2n\leq x}\lambda_\pi(r,n)\lambda_f(n)\ll&\, x^{37/52+\varepsilon}+
x^{37/52+(7-135\gamma)/728+\varepsilon}\\
\ll&\, x^{5/7-1/364+\varepsilon},
\end{split}
\ena
as long as $\gamma>7/135$.
This completes the proof of Corollary \ref{sum-Fourier-coefficients}. Note that the Ramanujan--Petersson assumption arises in applying the identity \eqref{FI-Functional-eq}.

\appendix
	
\section{Estimates for exponential integrals}
Let
\begin{equation*}
 I = \int_{\mathbb{R}} w(y) e^{i \varrho(y)} \mathrm{d}y.
\end{equation*}
Firstly, we have the following estimates for exponential integrals
(see \cite[Lemma 8.1]{BKY}  and \cite[Lemma A.1]{AHLQ}).
	
	\begin{lemma}\label{lem: upper bound}
		Let $w(x)$ be a smooth function    supported on $[ a, b]$ and
        $\varrho(x)$ be a real smooth function on  $[a, b]$. Suppose that there
		are   parameters $Q, U,   Y, Z,  R > 0$ such that
		\begin{align*}
		\varrho^{(i)} (x) \ll_i Y / Q^{i}, \qquad w^{(j)} (x) \ll_{j } Z / U^{j},
		\end{align*}
		for  $i \geqslant 2$ and $j \geqslant 0$, and
		\begin{align*}
		| \varrho' (x) | \geqslant R.
		\end{align*}
		Then for any $A \geqslant 0$ we have
		\begin{align*}
		I \ll_{ A} (b - a)
Z \bigg( \frac {Y} {R^2Q^2} + \frac 1 {RQ} + \frac 1 {RU} \bigg)^A .
		\end{align*}
			\end{lemma}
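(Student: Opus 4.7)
The proof is a standard non-stationary phase estimate via iterated integration by parts. Since $|\varrho'(y)| \geq R > 0$ throughout $[a,b]$, I would write $e^{i\varrho} = (i\varrho')^{-1} (e^{i\varrho})'$ and integrate by parts; boundary terms vanish as $w$ is compactly supported on $[a,b]$. Defining the transfer operator $D[f] := -(f/(i\varrho'))'$, $A$ iterations yield
\begin{equation*}
I = \int_\mathbb{R} D^A[w](y)\, e^{i\varrho(y)}\, \mathrm{d}y,
\end{equation*}
so the trivial bound gives $|I| \leq (b-a)\,\|D^A[w]\|_\infty$, reducing the problem to estimating the sup-norm of $D^A[w]$.

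The key combinatorial claim, proved by induction on $A$, is that $D^A[w]$ is a finite linear combination (with coefficients depending only on $A$) of terms of the form
\begin{equation*}
T_{k_0,\mathbf{k}} := \frac{w^{(k_0)}\, \prod_{j=1}^{m} \varrho^{(k_j+1)}}{(\varrho')^{A+m}},
\end{equation*}
indexed by $m \geq 0$, $k_0 \geq 0$, and $k_1,\ldots,k_m \geq 1$ satisfying $k_0 + k_1 + \cdots + k_m = A$. The base case is the identity $D[w] = -w'/(i\varrho') + w\varrho''/(i(\varrho')^2)$; the inductive step is a direct Leibniz-rule computation, where differentiating a numerator factor increments one of $k_0, k_j$ by one, while differentiating the denominator contributes an extra $\varrho''/\varrho'$ and raises $m$ by one. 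One checks that the index constraints are preserved in each case.

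Applying the hypotheses $|w^{(k_0)}| \ll Z\,U^{-k_0}$, $|\varrho^{(k_j+1)}| \ll Y\,Q^{-(k_j+1)}$ (valid since $k_j + 1 \geq 2$), and $|\varrho'|^{-1} \leq R^{-1}$, and using $\sum_{j=1}^m(k_j+1) = A - k_0 + m$, each such term satisfies
\begin{equation*}
|T_{k_0,\mathbf{k}}| \ll Z\, Y^m\, U^{-k_0}\, Q^{-(A-k_0+m)}\, R^{-(A+m)} = Z\, (RU)^{-k_0}\, \bigl(Y/(R^2Q^2)\bigr)^m\, (RQ)^{-(A-k_0-m)}.
\end{equation*}
Since $k_j \geq 1$ forces $A - k_0 \geq m$, the exponent $\ell := A - k_0 - m$ is non-negative, and $k_0 + m + \ell = A$ exhibits $|T_{k_0,\mathbf{k}}|/Z$ as a product of exactly $A$ factors, each being one of the three quantities $(RU)^{-1}$, $Y/(R^2Q^2)$, $(RQ)^{-1}$. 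Consequently
\begin{equation*}
|T_{k_0,\mathbf{k}}| \leq Z\,\bigl((RU)^{-1} + Y/(R^2Q^2) + (RQ)^{-1}\bigr)^A,
\end{equation*}
and summing the finitely many such terms gives the required bound on $\|D^A[w]\|_\infty$, which upon multiplication by $(b-a)$ yields the lemma.

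The only nontrivial step is the bookkeeping: identifying the triple $(k_0,m,\ell)$ as the multiplicities of the three factors $(RU)^{-1}$, $Y/(R^2Q^2)$, $(RQ)^{-1}$ so that the stated bound emerges with precisely these quantities and no others. There is no analytic obstacle beyond this combinatorial verification, and the result is standard in the literature on non-stationary phase (cf.\ \cite[Lemma 8.1]{BKY}, \cite[Lemma A.1]{AHLQ}).
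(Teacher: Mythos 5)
The proposal is correct. Note that the paper does not supply a proof of this lemma; it is quoted directly from \cite[Lemma 8.1]{BKY} (see also \cite[Lemma A.1]{AHLQ}). Your argument --- iterating $I = \int D^A[w]\, e^{i\varrho}$ with $D[f] = -(f/(i\varrho'))'$, the inductive classification of $D^A[w]$ into terms $w^{(k_0)}\prod_j\varrho^{(k_j+1)}/(\varrho')^{A+m}$ with $k_0+\sum k_j = A$ and $k_j\ge 1$, and the observation that $k_0+m+(A-k_0-m)=A$ so that each term is a $Z$-multiple of an $A$-fold product of $(RU)^{-1}$, $Y/(R^2Q^2)$, $(RQ)^{-1}$, hence dominated by the $A$-th power of their sum --- is exactly the standard non-stationary phase proof in the cited references, so the approaches coincide.
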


Next, we need the following evaluation for exponential integrals
which are
 Lemma 8.1 and Proposition 8.2 of \cite{BKY} in the language of inert functions
 (see \cite[Lemma 3.1]{KPY}).

Let $\mathcal{F}$ be an index set, $X: \mathcal{F}\rightarrow\mathbb{R}_{\geq 1}$ and under this map
$T\mapsto X_T$
be a function of $T \in \mathcal{F}$.
\begin{definition}\label{inert}
A family $\{w_T\}_{T\in \mathcal{F}}$ of smooth
functions supported on a product of dyadic intervals in $\mathbb{R}_{>0}^d$
is called $X$-inert if for each $j=(j_1,\ldots,j_d) \in \mathbb{Z}_{\geq 0}^d$
we have
\bna
C(j_1,\ldots,j_d)
= \sup_{T \in \mathcal{F} } \sup_{(y_1, \ldots, y_d) \in \mathbb{R}_{>0}^d}
X_T^{-j_1- \cdots -j_d}\left| y_1^{j_1} \cdots y_d^{j_d}
w_T^{(j_1,\ldots,j_d)}(y_1,\ldots,y_d) \right| < \infty.
\ena
\end{definition}

\begin{lemma}
\label{lemma:exponentialintegral}
 Suppose that $w = w_T(y)$ is a family of $X$-inert functions,
 with compact support on $[Z, 2Z]$, so that
$w^{(j)}(y) \ll (Z/X)^{-j}$.  Also suppose that $\varrho$ is
smooth and satisfies $\varrho^{(j)}(y) \ll Y/Z^j$ for some
$Y/X^2 \geq R \geq 1$ and all $y$ in the support of $w$.
\begin{enumerate}
 \item
 If $|\varrho'(y)| \gg Y/Z$ for all $y$ in the support of $w$, then
 $I \ll_A Z R^{-A}$ for $A$ arbitrarily large.
 \item If $\varrho''(y) \gg Y/Z^2$ for all $y$ in the support of $w$,
 and there exists $y_0 \in \mathbb{R}$ such that $\varrho'(y_0) = 0$ (note $y_0$ is
 necessarily unique), then
 \begin{equation}
  I = \frac{e^{i \varrho(y_0)}}{\sqrt{\varrho''(y_0)}}
 F_{\natural}(y_0) + O_{A}(  Z R^{-A}),
 \end{equation}
where $F_{\natural}(y_0)$ is an $X$-inert function (depending on $A$)  supported
on $y_0 \asymp Z$.
\end{enumerate}
\end{lemma}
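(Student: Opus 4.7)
The plan is to treat the two parts separately, following the standard stationary phase machinery adapted to the $X$-inert framework of \cite{KPY, BKY}. For Part (1), I would perform repeated integration by parts via the operator $D^* f = -\frac{d}{dy}\bigl(f/(i\varrho')\bigr)$, whose formal adjoint $D = (i\varrho')^{-1} d/dy$ satisfies $D(e^{i\varrho}) = e^{i\varrho}$, so that after $A$ iterations
\[
I = \int_{\mathbb{R}} (D^*)^A w(y)\, e^{i\varrho(y)}\, \mathrm{d}y.
\]
An induction shows that $(D^*)^A w$ is a finite sum of terms built from derivatives of $w$ and of $\varrho$ divided by powers of $\varrho'$. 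Using $|w^{(j)}| \ll (X/Z)^j$, $|\varrho^{(j)}| \ll Y/Z^j$, and $|\varrho'|^{-1} \ll Z/Y$, each application of $D^*$ multiplies the sup norm of the amplitude by at most $X/Y$ (the dominant contribution being $w'/\varrho'$, since $X \geq 1$ makes $X/Y$ dominate $1/Y$). Thus $\|(D^*)^A w\|_\infty \ll_A (X/Y)^A$; the assumption $R \leq Y/X^2$ together with $X \geq 1$ gives $X/Y \leq 1/(XR) \leq 1/R$, and multiplying by the support length $\asymp Z$ yields $I \ll_A Z R^{-A}$.

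For Part (2), I would first localize to a neighbourhood of the stationary point. With $\eta$ a standard bump equal to $1$ on $[-1,1]$ and vanishing outside $[-2,2]$, split $w = w_\flat + w_\sharp$ where $w_\flat(y) = w(y)\,\eta\bigl((y-y_0)R^{1/2-\varepsilon}/Z\bigr)$ localizes to $|y-y_0| \ll Z R^{-1/2+\varepsilon}$. On the support of $w_\sharp$, monotonicity of $\varrho'$ forced by $\varrho'' \gg Y/Z^2$ gives $|\varrho'(y)| \gg R^{-1/2+\varepsilon}\,Y/Z$, and Part (1) applied to this piece yields $O_A(Z R^{-A})$. On the near piece, apply the Morse substitution: assuming $\varrho'' > 0$ without loss of generality, set $u(y) = \operatorname{sgn}(y-y_0)\sqrt{2\bigl(\varrho(y)-\varrho(y_0)\bigr)}$, which on the localisation region is a smooth diffeomorphism with $u'(y_0) = \sqrt{\varrho''(y_0)}$. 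Then $\varrho(y) = \varrho(y_0) + u^2/2$ and
\[
\int w_\flat(y)\, e^{i\varrho(y)}\, \mathrm{d}y = e^{i\varrho(y_0)} \int W(u)\, e^{iu^2/2}\, \mathrm{d}u, \qquad W(u) = w_\flat(y(u))\,\mathrm{d}y/\mathrm{d}u.
\]
Expanding $W$ as a Taylor polynomial of degree $2A$ at $u=0$ and evaluating $\int u^k e^{iu^2/2}\,\mathrm{d}u$ (odd $k$ vanish, even $k$ reduce to Gaussian moments), the constant coefficient contributes $e^{i\varrho(y_0)}\cdot c\, w(y_0)/\sqrt{\varrho''(y_0)}$, while each subsequent coefficient is smaller by a factor $\asymp X^2/Y \leq R^{-1}$, producing the advertised main term with an error $O_A(Z R^{-A})$.

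The principal technical difficulty will be verifying that the resulting amplitude $F_\natural(y_0)$ is genuinely $X$-inert as a function of $y_0$, not merely smooth: one needs the derivative bounds $y_0^{\,j}\,F_\natural^{(j)}(y_0) \ll X^j$ with constants depending only on $j$ and $A$. This requires controlling derivatives of the Morse map $y \mapsto u$ and of $W$ in the parameter direction by $X/Z$ rather than by the potentially larger scale $\sqrt{Y}/Z$ on which $\varrho$ itself varies. The saving point is that $w$ is assumed $X$-inert jointly in $y$ and in the family parameters indexing $T$, so differentiating $w(y_0 + O(Z/\sqrt Y))$ in those parameters costs only $X/Z$; meanwhile $\varrho$ contributes to $y_0$-derivatives of $W$ only through algebraic Taylor coefficients of the form $\varrho^{(k)}(y_0)/\varrho''(y_0)^{k/2}$, each bounded by $R^{1-k/2}$ and hence harmless after the normalisation. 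This bookkeeping is precisely the content of Proposition 8.2 of \cite{BKY} and Lemma 3.1 of \cite{KPY}, whose inert-function formulation is designed exactly so that $X$-inertness is preserved through the stationary-phase expansion.
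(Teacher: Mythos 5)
The paper does not actually prove this lemma: it is presented as a direct citation of Lemma~8.1 and Proposition~8.2 of \cite{BKY} recast in the inert-function language of \cite[Lemma 3.1]{KPY}, so there is no in-paper argument against which to compare. Your sketch reproduces the standard proof strategy behind those references (repeated integration by parts via the adjoint operator for the non-stationary case, localization at the stationary point at scale $Z R^{-1/2+\varepsilon}$, a Morse change of variables, and a finite Taylor/Fresnel-moment expansion), and you correctly identify that the nontrivial content is verifying that $X$-inertness of the amplitude is propagated through the expansion; as you yourself note, that bookkeeping is precisely what \cite{BKY} and \cite{KPY} carry out. Two small points worth tightening if you were to write this in full: (i) after multiplying $w$ by the cutoff $\eta\bigl((y-y_0)R^{1/2-\varepsilon}/Z\bigr)$, the far piece $w_\sharp$ is only $\max\{X, R^{1/2-\varepsilon}\}$-inert rather than $X$-inert, and each integration by parts there saves only a factor $\ll R^{-2\varepsilon}$ in the worst case, so the number of iterations required to reach $R^{-A}$ depends on $\varepsilon$ (which is fine, but must be said); and (ii) the integrals $\int u^{2k} e^{iu^2/2}\,\mathrm{d}u$ are conditionally convergent Fresnel integrals evaluated by analytic continuation, and the Taylor remainder must be estimated using the compact support of $W$ rather than by genuine Gaussian decay. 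Neither of these affects the correctness of the approach, and in spirit your proposal matches the cited proofs.
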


We also quote that the following are results well known as the
second derivative tests (see \cite[Lemma 5.1.3]{Hux2} and \cite[Lemma 4]{Mun1}).
	
\begin{lemma}\label{lem: 2st derivative test, dim 1}
		Let $\varrho(x)$ be a real smooth function on  $[a, b]$. Let $w(x)$
be a real smooth function supported on $[ a, b]$ and let $V_0$ be its total
variation.
If $ \varrho^{(r)} (x) \gg \lambda_0$  on $[a, b]$, then
		\begin{align*}
	I\ll \frac {V_0} {\sqrt[r]{\lambda_0}}.
		\end{align*}
	\end{lemma}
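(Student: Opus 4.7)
The plan is to reduce the weighted oscillatory integral to an unweighted one via Riemann--Stieltjes integration by parts, and then to prove the resulting unweighted estimate by the classical van der Corput induction on $r$.

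\textbf{Stripping the weight.} Let $G(y)=\int_{a}^{y}e^{i\varrho(t)}\,dt$. Since $w$ is smooth and compactly supported inside $[a,b]$, we have $w(a)=w(b)=0$, and Riemann--Stieltjes integration by parts gives
$$
I=\int_{a}^{b}w(y)\,dG(y)=-\int_{a}^{b}G(y)\,dw(y).
$$
Using $\int_{a}^{b}|dw(y)|=\int_{a}^{b}|w'(y)|\,dy=V_{0}$ and the fact that $G(d)-G(c)=\int_{c}^{d}e^{i\varrho(t)}\,dt$ for $[c,d]\subseteq[a,b]$, one concludes
$$
|I|\;\leq\; V_{0}\cdot\sup_{[c,d]\subseteq[a,b]}\biggl|\int_{c}^{d}e^{i\varrho(t)}\,dt\biggr|.
$$
The problem is thereby reduced to proving the unweighted $r$-th derivative bound $\bigl|\int_{c}^{d}e^{i\varrho(t)}\,dt\bigr|\ll_{r}\lambda_{0}^{-1/r}$ uniformly in the subinterval.

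\textbf{Induction on $r$.} For the base case $r=1$, the assumption $\varrho'\gg\lambda_{0}$ combined with continuity forces $\varrho'$ to be of constant sign; invoking monotonicity of $\varrho'$ (automatic in all applications arising from the induction below, since the sign of $\varrho''$ is then fixed by hypothesis), one integration by parts yields
$$
\int_{c}^{d}e^{i\varrho(t)}\,dt=\left[\frac{e^{i\varrho(t)}}{i\varrho'(t)}\right]_{c}^{d}+\frac{1}{i}\int_{c}^{d}e^{i\varrho(t)}\,d\!\left(\frac{1}{\varrho'(t)}\right),
$$
whose boundary terms are $O(1/\lambda_{0})$ and whose Stieltjes integral is majorized by the total variation of $1/\varrho'$, equal to $|1/\varrho'(d)-1/\varrho'(c)|=O(1/\lambda_{0})$. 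For the inductive step $r-1\to r$ with $r\geq 2$, assume without loss of generality $\varrho^{(r)}\geq c\lambda_{0}$, so that $\varrho^{(r-1)}$ is strictly monotonic with at most one zero in $[c,d]$. Introduce a parameter $\delta>0$: the set $\{x:|\varrho^{(r-1)}(x)|<\delta\}$ is a single interval of length $O(\delta/\lambda_{0})$, contributing $O(\delta/\lambda_{0})$ trivially; its complement splits into at most two intervals on each of which $|\varrho^{(r-1)}|\geq\delta$ with constant sign, so the induction hypothesis yields an $O(\delta^{-1/(r-1)})$ contribution from each. Choosing $\delta=\lambda_{0}^{(r-1)/r}$ to balance the two terms produces the desired bound $O(\lambda_{0}^{-1/r})$.

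\textbf{Main care points.} The argument is classical and the main obstacle is purely bookkeeping: one must track the constants $C_{r}$ through the induction (they depend only on $r$, not on $\varrho$ or $w$), and one must verify that the Stieltjes step produces the factor $V_{0}$ rather than some stronger norm of $w$. The latter follows at once from $|dw(y)|=|w'(y)|\,dy$ together with the definition of total variation. With both items in place, combining the weight-stripping reduction with the van der Corput induction establishes the claimed bound $I\ll V_{0}\lambda_{0}^{-1/r}$.
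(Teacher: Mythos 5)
The paper does not prove this lemma itself — it simply cites Huxley (Lemma~5.1.3) and Munshi (Lemma~4). Your argument is precisely the classical van der Corput proof found in those sources: strip the smooth weight by Riemann--Stieltjes integration by parts, reducing to the unweighted bound $\left|\int_c^d e^{i\varrho}\right| \ll_r \lambda_0^{-1/r}$ uniformly over subintervals, and establish that by induction on $r$, splitting the domain according to whether $|\varrho^{(r-1)}|$ exceeds a threshold $\delta$ and balancing with $\delta = \lambda_0^{(r-1)/r}$. This is structurally and computationally correct, so you have supplied exactly the proof the paper refers the reader to, not an alternative route.

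Two minor remarks. There is a sign slip in the displayed integration by parts for the base case (the second term should read $-\tfrac{1}{i}\int_c^d e^{i\varrho}\,d(1/\varrho')$, not $+\tfrac{1}{i}\int\cdots$); this is immaterial since only the absolute value is used. More substantively, you correctly flag that the $r=1$ case requires a monotonicity hypothesis on $\varrho'$ beyond what the lemma literally states: without it, a nonvanishing but rapidly oscillating $\varrho'$ makes the total variation of $1/\varrho'$ much larger than $1/\lambda_0$, and the claimed $O(1/\lambda_0)$ bound genuinely fails. Your observation that monotonicity is automatic whenever $r=1$ is reached from $r\ge 2$ (since $\varrho''\gg\lambda_0>0$ then forces $\varrho'$ monotone on each piece) is the right way to close this; and the paper only invokes the lemma with $r\in\{2,3\}$, where nothing extra is needed.
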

	
	\begin{lemma}\label{lem: 2nd derivative test, dim 2}
		Let  $f (x, y   )$ be a real smooth function on  $[a, b] \times [c, d]$ with
		\begin{align*}
		& \left|\partial^2 f / \partial x^2 \right| \gg \rho_1 > 0, \hskip 15pt
\left|\partial^2 f / \partial y   ^2 \right| \gg \rho_2 > 0, \\
		& |\det f''|  = \left|\partial^2 f / \partial x^2 \cdot
\partial^2 f / \partial y   ^2 - (  \partial^2 f / \partial x \partial y    )^2 \right|
\gg \rho_1 \rho_2,
		\end{align*}
		on the rectangle $[a, b] \times [c, d]$.   Let $w (x, y   )$ be
a real smooth function supported on $[ a, b]  \times [c, d]$ and let
		\begin{align*}
		\text{Var} := \int_a^b \int_c^d \left|
\frac {\partial^2 w(x, y   )} {\partial x \partial y   } \right| \mathrm{d} x
 \mathrm{d} y   .
		\end{align*}
		Then
		\begin{align*}
		\int_a^b \int_c^d e (f(x, y   )) w (x, y   ) \mathrm{d} x
 \mathrm{d} y     \ll \frac { \text{Var} } {\sqrt { \rho_1  \rho_2}},
		\end{align*}
		with an absolute implied constant.
	\end{lemma}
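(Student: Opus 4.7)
The plan is to prove this two-dimensional second derivative test by iterating the one-dimensional version (Lemma A.4) twice, once in the $x$-variable and once in the $y$-variable, with the key idea being that the Hessian nondegeneracy condition $|\det f''|\gg \rho_1\rho_2$ is precisely what is needed to transfer the missing factor of $1/\sqrt{\rho_2}$ after the inner $x$-integration has produced a factor of $1/\sqrt{\rho_1}$.

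Concretely, I would write $I=\int_c^d h(y)\,\mathrm{d}y$ with $h(y)=\int_a^b w(x,y)e(f(x,y))\,\mathrm{d}x$. For each fixed $y$, since $|\partial_{xx}f|\gg \rho_1$, the function $x\mapsto f(x,y)$ has at most one critical point $x^\ast(y)$; by the implicit function theorem applied to $f_x(x^\ast(y),y)=0$ together with the lower bound on $f_{xx}$, this critical point (when it exists) depends smoothly on $y$ with derivative $x^{\ast\prime}(y)=-f_{xy}/f_{xx}$. A quantitative stationary phase analysis in $x$ (Lemma A.4, or its localized version obtained after a smooth partition of unity adapted to the scale $\rho_1^{-1/2}$ around $x^\ast(y)$) then represents $h(y)$ as $\sqrt{2\pi}\,w(x^\ast(y),y)|f_{xx}(x^\ast(y),y)|^{-1/2}e(f(x^\ast(y),y)\pm 1/8)$ up to an acceptable error plus non-stationary tails that are controlled by the total-variation hypothesis. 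Computing the effective phase of the remaining $y$-integral via the chain rule and the identity $f_x(x^\ast(y),y)=0$,
\begin{equation*}
\frac{d^2}{dy^2}f(x^\ast(y),y)=f_{yy}-\frac{f_{xy}^2}{f_{xx}}=\frac{\det f''}{f_{xx}},
\end{equation*}
which by hypothesis has magnitude $\gg \rho_1\rho_2/\rho_1=\rho_2$. Applying Lemma A.4 once more, this time in the $y$-direction, yields a factor of $1/\sqrt{\rho_2}$ that combines with the $1/\sqrt{\rho_1}$ saving from Step 2 to give the advertised bound, with the $\mathrm{Var}$ factor emerging from the mixed partial $\partial^2 w/\partial x\partial y$ when one tracks the $y$-dependence of the Hankel--type weight produced by the inner stationary phase.

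The main obstacle is that our weight $w(x,y)$ is not assumed to be $X$-inert (so Lemma A.5 does not apply off the shelf); only its cross total variation $\mathrm{Var}$ is controlled. To circumvent this, I would bypass the explicit stationary-phase asymptotic altogether and run Titchmarsh's $|I|^2$ argument: expand $|I|^2$ by Cauchy--Schwarz, change variables $(u,v)=(x+s,y+t)$, and observe that $\nabla_{(s,t)}\bigl[f(x,y)-f(x+s,y+t)\bigr]$ equals $-f''(x,y)\cdot(s,t)^T$ to leading order, so by the Hessian hypothesis its magnitude is $\gg \max(\rho_1|s|,\rho_2|t|,\sqrt{\rho_1\rho_2}|(s,t)|)$. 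Repeated integration by parts in $(s,t)$ then confines the effective range of $(s,t)$ to a box of area $\asymp 1/\sqrt{\rho_1\rho_2}$, and integrating the resulting bound over $(x,y)$ against $\partial_{xy}^2 w$ (which is what arises after the two integrations by parts in $(x,y)$ that supply the $\mathrm{Var}$ factor) yields $|I|^2\ll \mathrm{Var}^2/(\rho_1\rho_2)$, equivalent to the desired inequality. This route has the advantage of treating the presence or absence of critical points uniformly and requires no smoothness of $w$ beyond the hypothesis that $\partial_{xy}^2 w$ is integrable.
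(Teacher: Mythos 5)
The paper does not prove this lemma; it is quoted as standard, with references to Huxley \cite[Lemma 5.1.3]{Hux2} and Munshi \cite[Lemma 4]{Mun1}. So the question is only whether your argument is correct, and I do not believe it is.

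Your first sketch (iterated one-dimensional stationary phase) does identify the right algebra: after the $x$-integration the reduced phase has second $y$-derivative $\det f''/f_{xx}$, and the factor $|f_{xx}|^{-1/2}$ in the induced weight compensates so that the second application of the one-dimensional test gives $|\det f''|^{-1/2} \ll (\rho_1\rho_2)^{-1/2}$. But this is not a proof: to replace the inner integral $h(y)$ by the leading term $w(x^\ast,y)\,|f_{xx}|^{-1/2}e(f(x^\ast,y)\pm 1/8)$ with a controllable error you need bounds on $f^{(3)}, f^{(4)},\ldots$, which are not hypotheses of the lemma. The one-dimensional second derivative test (Lemma \ref{lem: 2st derivative test, dim 1}) gives only the upper bound $|h(y)|\ll \rho_1^{-1/2}\mathrm{Var}_x$; it retains no phase information, so you cannot then ``apply Lemma A.4 once more in the $y$-direction.'' The obstacle is therefore not only that $w$ is not inert.

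Your $|I|^2$ fallback has two concrete defects. First, $\nabla_{(s,t)}\bigl[f(x,y)-f(x+s,y+t)\bigr]=-\nabla f(x+s,y+t)$, which equals $-\nabla f(x,y)-f''(x,y)(s,t)^{T}+O(|(s,t)|^2)$; the constant term $-\nabla f(x,y)$ does not vanish, so the claim that this gradient ``equals $-f''(x,y)\cdot(s,t)^T$ to leading order'' is false. (The statement you want is for $\nabla_{(x,y)}$, by the mean value theorem.) Second, even after that correction the $|I|^2$ method does not give the claimed bound: in one dimension the identical scheme yields $|I|^2\ll \int_s \min\bigl(b-a,\,1/(\lambda|s|)\bigr)\,\mathrm{d}s$, which produces either the weaker $|I|\ll (b-a)^{1/2}\lambda^{-1/4}$ or a logarithmic loss, not the sharp $\lambda^{-1/2}$. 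The same loss appears in two dimensions. Moreover, the lower bound $|f''(x,y)(s,t)^T|\gtrsim$ (something of size $\rho_1|s|+\rho_2|t|$) does not follow from the stated Hessian hypotheses alone, since there is no upper bound on $f_{xy}$: the operator norm of $f''$ can be $\gg\sqrt{\rho_1\rho_2}$, and the smallest singular value of $f''$ is $|\det f''|/\|f''\|_{\mathrm{op}}$, which can be small. A correct proof must confront the regions near the moving critical curve $x^\ast(y)$ directly, as Huxley does; neither of your two routes does this.
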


Let $U$ be a smooth real
valued function supported on the interval $[a, b] \subset (0, \infty)$
and satisfying $U^{(j)}\ll_{a, b, j} 1$. Let $\xi\in \mathbb{R}$ and
$s= \beta + i \tau\in \mathbb{C}$. We consider the following integral transform
\bna
U^\dagger(\xi, s) := \int_0^{\infty} U(y) e(-\xi y) y^{s-1} \mathrm{d}y.
\ena
By Lemma 5 in \cite{Mun1},
\bea\label{$U$ bound}
U^\dagger (\xi, \beta + i \tau) \ll_{a, b, \beta, j}
\min \left\{ \left(\frac{1+|\tau|}{|\xi|} \right)^j ,
\left(\frac{1+|\xi|}{|\tau|} \right)^j \right\}.
\eea
On the other hand, by the second derivative test,
\bea\label{$U$ bound-2}
U^\dagger (\xi, \beta + i \tau) \ll_{a, b, \beta }
|\tau|^{-1/2}.
\eea

 \bigskip

  \begin{bibdiv}

\begin{biblist}

%
%

\bib{Agg2}{article} {
    author = {Aggarwal, Keshav},
		   title = {A new subconvex bound for {$\rm GL(3)$} $L$-functions in the $t$-aspect},
		   journal={Int. J. Number Theory. (in press)}
	      note={\url{arXiv:1903.09638}},
	      doi={10.1142/S1793042121500275}
	         date={2020}
}

\bib{AHLQ}{article} {
    author = {Aggarwal, Keshav},
    author={Holowinsky, Roman},
    author={Lin, Yongxiao},
    author={Qi, Zhi},
     title = {A Bessel delta-method and exponential sums for {$\rm GL(2)$}},
     journal={Q. J. Math.},
   volume={71},
   date={2020},
   number={3},
   pages={1143--1168},
   issn={0033-5606},
   doi={10.1093/qmathj/haaa026},

}

%
%
%
%
%
%

\bib{BKY}{article}{
   author={Blomer, Valentin},
   author={Khan, Rizwanur},
   author={Young, Matthew},
   title={Distribution of mass of holomorphic cusp forms},
   journal={Duke Math. J.},
   volume={162},
   date={2013},
   number={14},
   pages={2609--2644},
   issn={0012-7094},
   doi={10.1215/00127094-2380967},
}

		
\bib{Del}{article}{
   author={Deligne, Pierre},
   title={La conjecture de Weil. I},
   language={French},
   journal={Inst. Hautes \'{E}tudes Sci. Publ. Math.},
   number={43},
   date={1974},
   pages={273--307},
   issn={0073-8301},
}

\bib{Del-Ser}{article}{
   author={Deligne, Pierre},
   author={Serre, Jean-Pierre},
   title={Formes modulaires de poids $1$},
   language={French},
   journal={Ann. Sci. \'{E}cole Norm. Sup. (4)},
   volume={7},
   date={1974},
   pages={507--530 (1975)},
}


\bib{Fri-Iwa}{article}{
   author={Friedlander, John B.},
   author={Iwaniec, Henryk},
   title={Summation formulae for coefficients of $L$-functions},
   journal={Canad. J. Math.},
   volume={57},
   date={2005},
   number={3},
   pages={494--505},
   issn={0008-414X},
   doi={10.4153/CJM-2005-021-5},
}


\bib{Gol}{book}{
   author={Goldfeld, Dorian},
   title={Automorphic forms and $L$-functions for the group ${\rm
   GL}(n,\bold R)$},
   series={Cambridge Studies in Advanced Mathematics},
   volume={99},
   note={With an appendix by Kevin A. Broughan},
   publisher={Cambridge University Press, Cambridge},
   date={2006},
   pages={xiv+493},
   isbn={978-0-521-83771-2},
   isbn={0-521-83771-5},
   doi={10.1017/CBO9780511542923},
}

\bib{GL1}{article}{
   author={Goldfeld, Dorian},
   author={Li, Xiaoqing},
   title={Voronoi formulas on ${\rm GL}(n)$},
   journal={Int. Math. Res. Not.},
   date={2006},
   pages={Art. ID 86295, 25},
   issn={1073-7928},
   doi={10.1155/IMRN/2006/86295},
}

%
%
\bib{GR}{book}{
   author={Gradshteyn, I. S.},
   author={Ryzhik, I. M.},
   title={Table of integrals, series, and products},
   edition={7},
   note={Translated from the Russian;
   Translation edited and with a preface by Alan Jeffrey and Daniel
   Zwillinger;
   With one CD-ROM (Windows, Macintosh and UNIX)},
   publisher={Elsevier/Academic Press, Amsterdam},
   date={2007},
   pages={xlviii+1171},
   isbn={978-0-12-373637-6},
   isbn={0-12-373637-4},
}
%
%

\bib{HMQ}{article}{
   author={Holowinsky, Roman},
   author={Munshi, Ritabrata},
   author={Qi, Zhi},
   title={Hybrid subconvexity bounds for $L(\frac{1}{2},\text{Sym}^2f\otimes
   g)$},
   journal={Math. Z.},
   volume={283},
   date={2016},
   number={1-2},
   pages={555--579},
   issn={0025-5874},
   doi={10.1007/s00209-015-1610-9},
}
			
\bib{Huang}{article} {
    author={Huang, Bingrong}
     title = {On the Rankin--Selberg problem},
    note={\url{arXiv:2002.00591}},
   date={2020}
}

\bib{Hux2}{book}{
   author={Huxley, M. N.},
   title={Area, lattice points, and exponential sums},
   series={London Mathematical Society Monographs. New Series},
   volume={13},
   note={Oxford Science Publications},
   publisher={The Clarendon Press, Oxford University Press, New York},
   date={1996},
   pages={xii+494},
   isbn={0-19-853466-3},
}

\bib{Iwaniec}{book}{
   author={Iwaniec, Henryk},
   title={Topics in Classical Automorphic Forms},
   series={Graduate Studies in Mathematics},
   volume={17},
   publisher={American Mathematical Society, Providence, RI},
   date={1997},
   pages={xii+259},
   isbn={0-8218-0777-3},
   doi={10.1090/gsm/017},
}
	
\bib{Iwaniec1}{book}{
   author={Iwaniec, Henryk},
   title={Spectral methods of automorphic forms},
   series={Graduate Studies in Mathematics},
   volume={53},
   publisher={American Mathematical Society, Providence, RI;
   Revista Matemática Iberoamericana, Madrid},
   date={2002},
   pages={xii+220},
   isbn={0-8218-3160-7},
   doi={978-0-8218-3160-1},
}

\bib{IK}{book}{
   author={Iwaniec, Henryk},
   author={Kowalski, Emmanuel},
   title={Analytic number theory},
   series={American Mathematical Society Colloquium Publications},
   volume={53},
   publisher={American Mathematical Society, Providence, RI},
   date={2004},
   pages={xii+615},
   isbn={0-8218-3633-1},
   doi={10.1090/coll/053},
}
		
\bib{ILS}{article}{
   author={Iwaniec, Henryk},
   author={Luo, Wenzhi},
   author={Sarnak, Peter},
   title={Low lying zeros of families of $L$-functions},
   journal={Inst. Hautes \'{E}tudes Sci. Publ. Math.},
   number={91},
   date={2000},
   pages={55--131 (2001)},
   issn={0073-8301},
}

\bib{J-V}{article}{
   author={J\"{a}\"{a}saari, J.},
   author={Vesalainen, Esa V.},
   title={Exponential sums related to Maass forms},
   journal={Acta Arith.},
   volume={190},
   date={2019},
   number={1},
   pages={1--48},
   issn={0065-1036},
}

\bib{Jutila}{book}{
   author={Jutila, M.},
   title={Lectures on a method in the theory of exponential sums},
   series={Tata Institute of Fundamental Research Lectures on Mathematics
   and Physics},
   volume={80},
   publisher={Published for the Tata Institute of Fundamental Research,
   Bombay; by Springer-Verlag, Berlin},
   date={1987},
   pages={viii+134},
   isbn={3-540-18366-3},
}

\bib{Jutila1}{article}{
   author={Jutila, M.},
   title={On exponential sums involving the Ramanujan function},
   journal={Proc. Indian Acad. Sci. Math. Sci.},
   volume={97},
   date={1987},
   number={1-3},
   pages={157--166},
   issn={0253-4142},
}

\bib{K}{article}{
   author={Kim, Henry H.},
   title={Functoriality for the exterior square of $GL_4$ and the symmetric fourth of $GL_2$.
   With appendix 1 by Dinakar Ramakrishnan and appendix 2 by Kim and Peter Sarnak},
   journal={J. Amer. Math. Soc.},
   volume={16},
   date={2003},
   number={1},
   pages={139--183},
   issn={0894-0347},
}

\bib{KP}{article}{
   author={Kaczorowski, J.},
   author={Perelli, A.},
   title={On the structure of the Selberg class. VI. Non-linear twists},
   journal={Acta Arith.},
   volume={116},
   date={2005},
   number={4},
   pages={315--341},
   issn={0065-1036},
}

\bib{KPY}{article}{
   author={Kiral, Eren Mehmet},
   author={Petrow, Ian},
   author={Young, Matthew P.},
   title={Oscillatory integrals with uniformity in parameters},
   language={English, with English and French summaries},
   journal={J. Th\'{e}or. Nombres Bordeaux},
   volume={31},
   date={2019},
   number={1},
   pages={145--159},
   issn={1246-7405},
}

\bib{KMV}{article}{
   author={Kowalski, E.},
   author={Michel, Ph.},
   author={VanderKam, J.},
   title={Rankin--Selberg $L$-functions in the level aspect},
   journal={Duke Math. J.},
   volume={114},
   date={2002},
   number={1},
   pages={123--191},
   issn={0012-7094},
   doi={10.1215/S0012-7094-02-11416-1},
}

\bib{KMS19}{article} {
author = {Kumar, Sumit}
    author = {Mallesham, Kummari},
  author = {Singh,  Saurabh Kumar}
     title = {Non-linear additive twist of {F}ourier coefficients of
		{$GL(3)$} {M}aass forms},
   note={\url{arXiv:1905.13109}},
   date={2019},
}

\bib{Li}{article}{
   author={Li, Xiaoqing},
   title={Bounds for ${\rm GL}(3)\times {\rm GL}(2)$ $L$-functions and ${\rm
   GL}(3)$ $L$-functions},
   journal={Ann. of Math. (2)},
   volume={173},
   date={2011},
   number={1},
   pages={301--336},
   issn={0003-486X},
   doi={10.4007/annals.2011.173.1.8},
}

\bib{LMS}{article} {
    author = {Lin, Yongxiao}
    author={Michel, Ph.}
    author={Sawin, Will}
     title = {Algebraic twists of $\rm GL_3\times \rm GL_2$ $L$-functions},
    note={\url{arXiv:1912.09473}},
   date={2019}
}

\bib{Lv09}{article}{
   author={L\"{u}, Guangshi},
   title={On sums involving coefficients of automorphic $L$-functions},
   journal={Proc. Amer. Math. Soc.},
   volume={137},
   date={2009},
   number={9},
   pages={2879--2887},
   issn={0002-9939},
   doi={10.1090/S0002-9939-09-09845-1},
}



\bib{MS2}{article}{
   author={Miller, Stephen D.},
   author={Schmid, Wilfried},
   title={Automorphic distributions, $L$-functions, and Voronoi summation
   for ${\rm GL}(3)$},
   journal={Ann. of Math. (2)},
   volume={164},
   date={2006},
   number={2},
   pages={423--488},
   issn={0003-486X},
   doi={10.4007/annals.2006.164.423},
}
%
%
%

\bib{Mun1}{article}{
   author={Munshi, Ritabrata},
   title={The circle method and bounds for $L$-functions---III: $t$-aspect
   subconvexity for $GL(3)$ $L$-functions},
   journal={J. Amer. Math. Soc.},
   volume={28},
   date={2015},
   number={4},
   pages={913--938},
   issn={0894-0347},
   doi={10.1090/jams/843},
}
		
%
%
%
%
%
%

\bib{Mun6}{article}{
   author={Munshi, Ritabrata},
   title={Subconvexity for $GL(3)\times GL(2)$ {$L$}-functions in $t$-aspect},
   note={\url{arXiv:1810.00539}},
   date={2018},
}

\bib{Murty}{article}{
   author={Murty, M. Ram},
   title={On the estimation of eigenvalues of Hecke operators},
   journal={Rocky Mountain J. Math.},
   volume={15},
   date={1985},
   number={2},
   pages={521--533},
   issn={0035-7596},
   doi={10.1216/RMJ-1985-15-2-521},
}

\bib{Ren-Ye}{article}{
   author={Ren, XiuMin},
   author={Ye, YangBo},
   title={Resonance and rapid decay of exponential sums of Fourier
   coefficients of a Maass form for $\rm{GL}_m(\Bbb{Z})$},
   journal={Sci. China Math.},
   volume={58},
   date={2015},
   number={10},
   pages={2105--2124},
   issn={1674-7283},
   doi={10.1007/s11425-014-4955-3},
}

%
%
%
%
%

 \bib{SZ}{article}{
   author={Sun, Qingfeng},
   author={Zhao, Rui},
   title={Bounds for ${\rm GL}_3$ $L$-functions in depth aspect},
   journal={Forum Math.},
   volume={31},
   date={2019},
   number={2},
   pages={303--318},
}

\bib{Wat}{book}{
   author={Watson, G. N.},
   title={A treatise on the theory of Bessel functions},
   series={Cambridge Mathematical Library},
   note={Reprint of the second (1944) edition},
   publisher={Cambridge University Press, Cambridge},
   date={1995},
   pages={viii+804},
   isbn={0-521-48391-3},
}


\end{biblist}

\end{bibdiv}

\end{document}